\documentclass{article}
\usepackage[margin=1.5in]{geometry}

\usepackage{graphics}

\usepackage{algorithm}
\usepackage{algorithmic}
\usepackage{enumitem}

\usepackage{amsthm}
\newtheorem{theorem}{Theorem}
\newtheorem{lemma}{Lemma}
\newtheorem{corollary}{Corollary}
\newtheorem{proposition}{Proposition}
\newtheorem{definition}{Definition}
\newtheorem{remark}{Remark}
\newtheorem{example}{Example}

\newtheorem{model}{Model}

\usepackage{hyperref}
\usepackage{url}
\usepackage{amsmath}

\usepackage{amsfonts}
\usepackage{dsfont}
\usepackage{upgreek}
 \usepackage[bb=boondox]{mathalfa} 

\usepackage{graphicx}
\usepackage{epstopdf}
\graphicspath{ {./figures/} } 

\usepackage{caption}
\usepackage{subcaption}


\usepackage{xspace}

\newcommand{\Sec}[1]		{Sec.\,\ref{#1}}
\newcommand{\Fig}[1]		{Fig.\,\ref{#1}}

\newcommand{\Eq}[1]			{Eq.\,\ref{#1}}
\newcommand{\Tab}[1]		{Tab.\,\ref{#1}}

\newcommand{\Theorem}[1]{Theorem~\ref{#1}}
\newcommand{\Theorems}[1]{Theorems~\ref{#1}}
\newcommand{\Proposition}[1]{Proposition~\ref{#1}}

\newcommand{\Corollary}[1]{Corollary~\ref{#1}}
\newcommand{\Lemma}[1]{Lemma~\ref{#1}}

\newcommand{\Definition}[1]{Definition~\ref{#1}}

\newcommand{\ie}   			{i.e.~}
\newcommand{\eg}   			{e.g.~}

\newcommand{\st}   			{\mbox{s.t.~}}
\newcommand{\as}   			{\mbox{~a.s.}}
\newcommand{\aas}   		{\mbox{~a.a.s.}}
\newcommand{\wrt}   		{w.r.t.~}
\newcommand{\Erdos}   	{Erd\"os-R\'enyi }
\newcommand{\one}       {\mathds{1}}
\newcommand{\zero}      {\mathbb{0}}

\newcommand{\Exp}[1]    {\mathbb{E}(#1)}
\newcommand{\ExpBigP}[1]    {\mathbb{E}\left(#1\right)}
\newcommand{\Prob}[1]   {\mathbb{P}(#1)}
\newcommand{\ExpUnder}[2]    {\mathbb{E}_{#1}(#2)}
\newcommand{\ProbUnder}[2]   {\mathbb{P}_{#1}(#2)}
\newcommand{\op}[1]     {\,{#1}\,}
\newcommand{\card}[1] {\mathop{\rm card}(#1)}


\newcommand{\HazMat}   {\mathcal{H}}
\newcommand{\HazSpec}  {\rho_n}
\newcommand{\AdjMat}   {\mathcal{A}}
\newcommand{\RandomG}   {G}
\newcommand{\OneToN}   {[|n|]}

\title{Spectral Bounds in Random Graphs Applied to Spreading Phenomena and Percolation}

\author{
R\'emi Lemonnier$^{1,2}$
\hspace{1.5em}
Kevin Scaman$^1$
\hspace{1.5em}
Nicolas Vayatis$^1$
\\$^1$ CMLA {--} ENS Cachan, CNRS, Universit\'e Paris-Saclay, France
\\$^2$ Numberly, 1000Mercis group, Paris, France
\\\texttt{\{lemonnier, scaman, vayatis\}@cmla.ens-cachan.fr}
}
\date{}

\begin{document}

\maketitle

\begin{abstract}
In this paper, we derive nonasymptotic theoretical bounds for the influence in random graphs that depend on the spectral radius of a particular matrix, called the \emph{Hazard matrix}. We also show that these results are generic and valid for a large class of random graphs displaying correlation at a local scale, called the LPC random graphs. In particular, they lead to tight and novel bounds in percolation, epidemiology and information cascades.
The main result of the paper states that the influence in the sub-critical regime for LPC random graphs is at most of the order of $O(\sqrt{n})$ where $n$ is the size of the network, and of $O(n^{2/3})$ in the critical regime, where the epidemic thresholds are driven by the size of the spectral radius of the Hazard matrix with respect to 1.
 As a corollary,  it is also shown that such bounds hold for the size of the giant component in inhomogeneous percolation, the SIR model in epidemiology, as well as for the long-term influence of a node in the Independent Cascade Model.
\end{abstract}

\section{Introduction}
Propagation models over graphs are very popular and particularly well suited to the analysis of epidemics and information cascades. Although different in many technical aspects, the models used in these two fields are similar and can be considered as particular instances of a more generic framework: the analysis of the influence of reachable sets in random networks.

In epidemiology, the study of diffusion models such as SI, SIS or SIR \cite{Newman:2010:NI,kermack1932contributions} highlighted the impact of a spectral characteristic on the size of the epidemic: the spectral radius of the underlying network. Moreover, it was shown that this quantity acted as a critical threshold for the size of the epidemic \cite{van2009virus,prakash2012threshold}, and recent work provided upper bounds that depend highly on this spectral quantity \cite{draief2008}. Our work can be seen as a generalization of these works, by providing the right spectral quantity to consider in the case of more generic diffusion and percolation models.

 In percolation theory, the concept of reachability characterizes the connected components of undirected graphs and the behavior of such components has been the object of several studies. For homogeneous random graphs $\RandomG(n,p)$ where removal of edges in the fully connected graph with $n$ vertices occurs independently for every  edge with constant probability $1-p$, Erd\"os and Renyi \cite{erd6s1960evolution} showed that a phase transition occured for $p=\frac{1}{n}$, and their results were later refined by Bollob\'as \cite{bollobas1984evolution} and Lucszak \cite{luczak1990component} for the case $pn=O(1)$.
 For inhomogeneous graphs, we refer to the work by Bollob\'as, Janson and Riordan \cite{bollobas2007phase} in the special case where the number of edges $E$ is $O(n)$, and Bollob\'as, Borgs, Chayes and Riordan \cite{bollobas2010percolation} when $E=O(n^2)$. These references contain a number of asymptotic results (\emph{i.e.} when $n \rightarrow \infty$) including the critical value of the percolation threshold, and upper bounds on the size of connected components.

In the present work, we introduce the notion of random graphs with \emph{Local Positive Correlation} (LPC) (see Definition \ref{def:LPC}) and  derive nonasymptotic upper bounds for the influence in this setup. The concept of random graphs with LPC unifies,  in some sense, the description of the phenomena observed in the fields of percolation theory, epidemiology and information cascades. The upper bounds obtained depend on the spectral radius $\HazSpec$ of a particular matrix built from the edge probabilities, called the \emph{Hazard matrix}. We show that such bounds reveal three regimes: subcritical, critical and supercritical, depending on the value of the spectral radius $\HazSpec$. For random graphs with $n$ vertices, we show that  the influence is at most a $O(\sqrt{n})$ when $\HazSpec < 1$, and in average a $O(1)$. However, when $\HazSpec > 1$, the regime becomes supercritical as the influence becomes potentially linear in $n$. More specifically, we show that the influence is upper bounded by $\gamma_0(\HazSpec) n + o(n)$, where $\gamma_0(\HazSpec)\in[0,1]$ is a simple function (see \Definition{def:gamma}) and that this bound is met for particular random graphs. Finally, in the transitional regime where $\HazSpec \approx 1$, the influence is at most a $O(n^{2/3})$, and in average a $O(\sqrt{n})$. Moreover, we also obtain that the size of this intermediate regime \wrt $\HazSpec$ is proportional to $n^{-1/3}$. 
\Tab{tab:summary} summarizes the different behaviors of upper bounds for influence in random graphs with LPC, in the subcritical, critical and supercritical regimes, as provided in \Sec{sec:bounds}. In the \emph{Random A} scenario, a set of $n_0$ influencers are drawn at random, while in \emph{Random B} each node belongs to the influencer set with independent probability $q$.

\renewcommand{\arraystretch}{1.3}
\begin{table}[h]
\centering
\begin{tabular}{|r|c|c|c|}
\hline
 & \multicolumn{3}{ c| }{\textbf{Regimes}}\\
\textbf{Scenario} & Subcritical ($\HazSpec < 1$) & Critical ($\HazSpec \approx 1$) & Supercritical ($\HazSpec > 1$)\\
\hline\hline
(I) Worst-case & $O(\sqrt{n})$ & $O(n^{2/3})$ & $\gamma_0(\HazSpec)n + O(\sqrt{n})$ \\
\hline
(II) Random A & $O(1)$ & $O(\sqrt{n})$ & $\gamma_0(\HazSpec)n + O(1)$ \\
\hline
(III) Random B & $O(qn)$ & $O(\sqrt{q}n)$ & $\gamma_0(\HazSpec)n + O(qn)$ \\
\hline
\end{tabular}
\caption{Summary of results for influence in random graphs with LPC.}
\label{tab:summary}
\end{table}

As a corollary, we derive upper bounds for the size of the giant component in bond and site percolation which significantly improve the previous results of \cite{bollobas2010percolation}. More specifically, we show that the spectral radius $\HazSpec$ is a key quantity for percolation, and that the size of the giant component $C_1(\RandomG)$ is, in expectation, upper bounded by a $O(\sqrt{n})$ when $\HazSpec < 1$, by a $O(n^{2/3})$ when $|\HazSpec - 1| = O(n^{-1/3})$, and by $\gamma_0(\HazSpec) n + o(n)$ when $\HazSpec > 1$. Moreover, we prove that a giant component can only exist if $\limsup_{n\rightarrow +\infty} \HazSpec > 1$. Also, we analyze the distribution of the size of connected components by upper bounding the number $N(m)$ of connected components of size bigger than $m$ in expectation.
\Tab{tab:summaryPerco} summarizes the different behaviors of the upper bounds in the subcritical, critical and supercritical regimes, derived  for percolation.

\renewcommand{\arraystretch}{1.3}
\begin{table}[h]
\centering
\begin{tabular}{|r|c|c|c|}
\hline
 & \multicolumn{3}{ c| }{\textbf{Regimes}}\\
\textbf{Quantity} & Subcritical ($\HazSpec < 1$) & Critical ($\HazSpec \approx 1$) & Supercritical ($\HazSpec > 1$)\\
\hline\hline
$\Exp{C_1(\RandomG)}$ & $O(\sqrt{n})$ & $O(n^{2/3})$ & $\gamma_0(\HazSpec)n + O(\sqrt{n})$ \\
\hline
$\Exp{N(m)}$ & $O(nm^{-2})$ & $O(nm^{-3/2})$ & $\gamma_0(\HazSpec)n/m + O(nm^{-3/2})$ \\
\hline
\end{tabular}
\caption{Summary of results for bond and site percolation: $C_1(G)$ is the size of the giant component, and $N(m)$ is the number of connected components of size bigger than $m$.}
\label{tab:summaryPerco}
\end{table}

Finally, we apply our upper bounds to the late-time properties of the Susceptible-Infected-Removed (SIR) epidemic model, as well as discrete and continuous-time information cascades. More specifically, we significantly improve the results of \cite{draief2008} in the subcritical regime, and show that, near the epidemic threshold, the number of infected nodes in the SIR model is a $O(n^{2/3})$. Furthermore, we extend the traditional epidemic threshold in $\beta\rho(\AdjMat) = \delta$, where $\beta$ and $\delta$ are the transmission and recovery rates and $\AdjMat$ is the adjacency matrix of the underlying graph, to more realistic SIR models in which the incubation period may follow a non-exponential distribution. 

The remainder of the paper is organized as follows. In \Sec{sec:model}, we recall the notions of reachable set and influence in random networks, and introduce a generic type of random graphs with \emph{Local Positive Correlation} (LPC). In \Sec{sec:bounds}, we derive theoretical bounds for the influence in random graphs with LPC. Finally, in \Sec{sec:percolation}, \Sec{sec:sitePerco}, \Sec{sec:epidemiology} and \Sec{sec:IC}, we show that the previous results apply respectively to the fields of bond percolation, siet percolation, epidemiology and information cascades, and improve existing results in these fields.

\section{Random graphs, Hazard matrix, influence\\and LPC property}\label{sec:model}
In this section, we introduce the main   notations and definitions. In particular, we define two novel concepts: the \emph{Hazard matrix}, that will play a key role in the analysis of influence in random graphs, and a generic class of random graphs with \emph{Local Positive Correlation} (LPC).

\subsection{Setup}
We now provide useful notations and a precise definition of random graphs used thereafter.\\

\noindent {\em General notations.} For any set $X$, we will denote as $\card{X}$ its number of elements, $\mathcal{P}_n(X)$ the set of all subsets of $X$ of size $n$ and $X\setminus Y$ the complementary subset of $Y$ in $X$. We will also use the abbreviation $\OneToN = \{1,...,n\}$ the set of all integers between $1$ and $n$, and $\one\{\cdot\}$ the indicator function.
We will say that a property $A$ holds \emph{almost surely} (abbreviated as $\as$) if $\Prob{A} = 1$, and that a sequence of properties $A_n$ holds \emph{asymptotically almost surely} (abbreviated as $\aas$) if $\lim_{n\rightarrow +\infty}\Prob{A_n} = 1$.

\medskip

\noindent {\em Random graphs.} Let $n>0$ be a fixed integer. We consider the set of all graphs  $\mathcal{G} = (\mathcal{V}, \mathcal{E})$ with labelled vertices $\mathcal{V}=\OneToN$ and edge set $\vec{\mathcal{E}} \subset \OneToN^2$.
A random graph of size $n$ is a random element in this set of all possible graphs. Such a random graph is entirely characterized by its \emph{random} adjacency matrix $A\in\{0,1\}^{n^2}$, defined by $A_{ij} = 1$ if $(i,j) \in\vec{\mathcal{E}}$, else $A_{ij} = 0$.
In what follows, we will use the notation $\RandomG(n, A)$ to denote a random graph of size $n$ and random adjacency matrix $A\in\{0,1\}^{n^2}$, where  $A_{ij}$ are Bernoulli random variables indicating the presence or absence of edge $(i,j)$ in the random graph.
We will call \emph{undirected} a random graph whose adjacency matrix is \emph{symmetric}, \ie $\forall i,j$, $A_{ij} = A_{ji} \as$. The simplest example of undirected random graph is the \Erdos random graph $\RandomG(n,p)$ whose adjacency matrix has independent and identically distributed (i.i.d.) edge presence variables $\{A_{ij} : i < j\}$ and $\Exp{A_{ij}} = p$.
Note that, in general, the edge variables $A_{ij}$ are correlated.

\subsection{Hazard characteristics of random graphs}
For many diffusion models, the spectral features of the underlying graph were shown to have a drastic impact on the amplitude of the spread (see for example the role of the spectral radius of the adjacency matrix in the epidemiology literature \cite{van2009virus,prakash2012threshold}). In order to generalize such results to a broader class of diffusion and percolation phenomena, we introduce two spectral characteristics that are better suited to the analysis of the influence in random graphs: the \emph{Hazard matrix} and the \emph{Hazard radius}. To the best of our knowledge, these concepts have not been considered before (besides our preliminary results presented recently \cite{NIPS2014_5364,NIPS2015_5701}).

\begin{definition}[\textbf{Hazard matrix}]\label{def:hazardMatrix}
For a  random graph model $\RandomG (n, A)$, the \emph{Hazard matrix}  $\HazMat$ is the $n\times n$ matrix whose coefficients $\HazMat_{ij}$ are defined as:
\begin{equation}
\HazMat_{ij} = -\ln(1-\Exp{A_{ij}})~.
\end{equation}
\end{definition}

The spectral radius of this matrix will play a key role in the quantification of the influence. We recall that for any square matrix $M$ of size $n$, its spectral radius $\rho(M)$ is defined as the largest of the  the eigenvalues of  $M$.

\begin{definition}[\textbf{Hazard radius}]
For a  random graph model $\RandomG  (n, A)$ with \emph{Hazard matrix} $\HazMat$, we define the \emph{Hazard radius} as:
\begin{equation}
\HazSpec =\rho \left(\frac{\HazMat+\HazMat^\top}{2}\right)~.
\end{equation}
\end{definition}

\begin{remark}\label{rem:pij}
Let $P = (\Exp{A_{ij}})_{ij}$ be the expected adjacency matrix. When the $P_{ij}$'s are small, the Hazard matrix is very close to $P$. This implies that, for small values of $P_{ij}$, the spectral radius of $\HazMat$ will be very close to that of $P$. More specifically, a simple calculation holds
\begin{equation}
\rho(P) \leq \rho(\HazMat) \leq \frac{-\ln(1-\|P\|_\infty)}{\|P\|_\infty} \rho(P),
\end{equation}
where $\|P\|_\infty = \max_{i,j} P_{ij}$. The relatively slow increase of $\frac{-\ln(1-x)}{x}$ for $x\rightarrow 1^-$ implies that the behavior of $\rho(P)$ and $\rho(\HazMat)$ will be of the same order of magnitude even for large (but lower than $1$) values of $\|P\|_\infty$. Moreover, when considering a sequence of random graphs $\RandomG_n$, if $\lim_{n\rightarrow +\infty} \|P_n\|_\infty = 0$, then $\rho(\HazMat_n) \approx \rho(P_n)$ and the subcriticality of the influence is also induced by $\limsup_{n\rightarrow +\infty} \rho(\frac{P_n + P_n^\top}{2}) \leq 1$ (see \Sec{sec:bounds}).
\end{remark}

In addition, we introduce here a useful function that will allow the simplification of the upper bounds derived in this paper.

\begin{definition}[\textbf{Hazard function}]\label{def:gamma}
let $\rho \geq 0$ and $a > 0$. The  \emph{Hazard function} $\gamma(\rho, a)$ is defined as the unique solution in $[0, 1]$ of the following equation:
\begin{equation}
\gamma - 1 + \exp\left(-\rho\gamma - a\right) = 0~.
\end{equation}
We will also use the notation $\gamma_0(\rho) = \lim_{a\rightarrow 0^+} \gamma(\rho, a)$ for the limit of the Hazard function at 0.
\end{definition}
\Fig{fig:gamma} reveals the behavior of $\gamma_0(\rho)$ and $\gamma(\rho, a)$ \wrt $\rho$.
For more information on the Hazard function and the bounds used to derive the subcritical, critical and supercritical regimes, we refer to Appendix \ref{appendix:gamma}.

\begin{figure}[h]
	\centering
		\includegraphics[viewport=16 0 235 172, clip=true, width=0.6\linewidth]{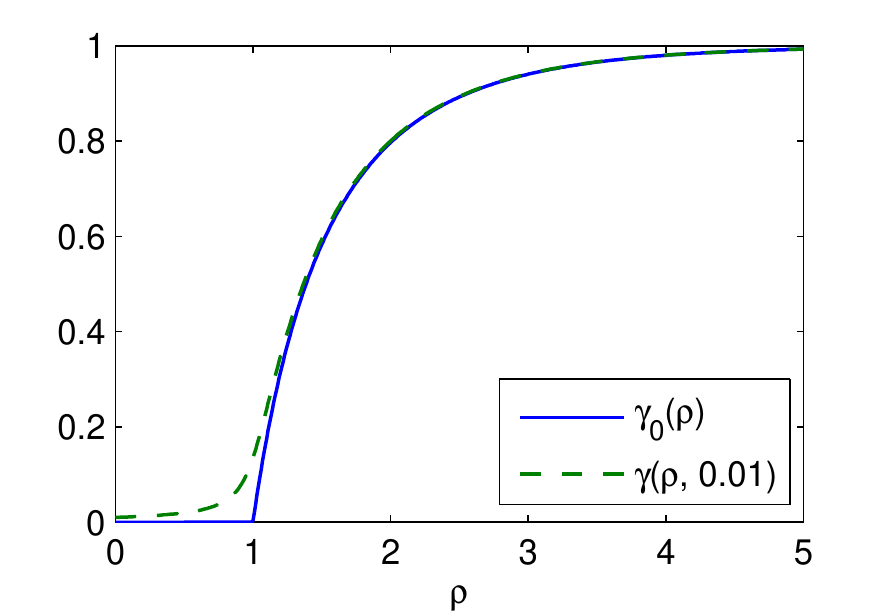}
		\caption{Behavior of $\gamma_0(\rho)$ and $\gamma(\rho, a)$ \wrt $\rho$.}
		\label{fig:gamma}
\end{figure}

\subsection{Reachability and influence}
In this section, we define the \emph{influence} as the size of a \emph{reachable set}. A node $u$ is \emph{reachable} from another node $v$ if there is a path connecting  $u$ to $v$ in the considered graph \cite{opac-b1130933}. As we will see later, this definition generalizes the notion of influence in information cascades \cite{Kempe:2003:MSI:956750.956769}, the size of a contagion in epidemiology and the size of a connected component in percolation.

\begin{definition}[\textbf{Reachable set}]\label{def:reachableSet}
Consider a random graph $\RandomG (n, A)$. We call influencers a fixed set $\mathcal{I} \subset \OneToN$ of nodes and we define the \emph{reachable set} of influencers $\mathcal{I}$ in $\mathcal{G}$ the random set of nodes $R(\mathcal{I}, A)$ such that:
\begin{equation}\label{pathdef}
i\in R(\mathcal{I}, A) \iff i \in \mathcal{I} \mbox{ or } \prod_{q\in\mathcal{Q}_{\mathcal{I},i}}\left(1 - \prod_{(j,k)\in q}A_{jk}\right) = 0~.
\end{equation}
where, for any node $i \in \OneToN$, the collection $\mathcal{Q}_{\mathcal{I},i} = \{\{(i_0,i_1),(i_1,i_2),...,(i_{k-1},i_k)\} : k\in\mathbb{N}, i_0\in\mathcal{I}, i_k = i \mbox{ and all } i_j \mbox{ are distinct}\}$ is the set of directed paths (removing the loops) from the set $\mathcal{I}$ to node $i$.
\end{definition}

Informally, a node $i$ belongs to the reachable set of $\mathcal{I}$ if and only if there is a path from $\mathcal{I}$ to $i$ in the graph. By extension, we will call the \emph{reachable set of node $i$} the reachable set of $\{i\}$. This definition will be used to characterize the asymptotic behavior of the state vector in a contagion process over a graph, as well as connected components of random undirected networks. 
As we will see in sections \ref{sec:percolation}, \ref{sec:epidemiology} and \ref{sec:IC}, this setting is general enough to include many reference models used in the fields of percolation theory, epidemiology and information cascades.

\medskip

We now introduce the notion of \emph{influence} of a set $\mathcal{I}$ of nodes, denoted as $\sigma(\mathcal{I})$, as the expected size of the reachable set of $\mathcal{I}$ with respect to the random graph model $\RandomG(n, A)$.

\begin{definition}[\textbf{Influence}]
Given a random graph model $\RandomG(n, A)$ and a fixed {\em set of influencers} $\mathcal{I} \subset \OneToN$, the influence of $\mathcal{I}$ in $\RandomG(n, A)$ is defined as the quantity:
\begin{equation}
\sigma(\mathcal{I}) = \Exp{\card{R(\mathcal{I}, A)}}~,
\end{equation}
where $R(\mathcal{I}, A)$ is the reachable set of $\mathcal{I}$ in $\RandomG (n, A)$.
\end{definition}

\medskip

\noindent {\em Examples.} In order to illustrate the previous concepts, we now relate Hazard radiuses to critical thresholds for influence in four particular random graphs. In the first two, we will show that the critical threshold for influence can be restated as $\HazSpec = 1$. The other two examples are cases in which the threshold may differ, sometimes significantly, from $\HazSpec = 1$.

\begin{example}[\textbf{\Erdos random graphs}]
For \Erdos random graphs $\RandomG(n,\frac{c}{n})$, whose adjacency matrix has i.i.d. edge  variables $\{A_{ij} : i \leq j\}$ and $\Exp{A_{ij}} = \frac{c}{n}$, percolation theory  (\cite{erd6s1960evolution}) states that a threshold phenomenon occurs for $c = 1$. Moreover, using \Definition{def:hazardMatrix}, $\HazMat^{n}_{ij} = -\ln(1-\frac{c}{n})$ and we have:
\begin{equation}
\HazSpec = -n\ln\left(1-\frac{c}{n}\right)~.
\end{equation}
Hence, for $\RandomG(n, \frac{c}{n})$, criticality arises when $\HazSpec \rightarrow 1$ as $n$ tends to infinity.
\end{example}

\begin{example}[\textbf{Poissonian graph processes}]
We now consider a particular random graph, called the \emph{Poissonian graph process or Norros-Reittu model} (\cite{norros2006}) and closely related to random graphs of fixed degree distribution known as the \emph{configuration model} (\cite{molloy1995critical,molloy1998size}). More specifically, let $w = (w_i)_{i\in\OneToN}$ be a weight vector, and $\RandomG(n,w)$ an undirected random graph of $n$ nodes and adjacency matrix $A$, where, for $i \leq j$, $A_{ij}$ are independent Bernouilli random variables of parameter 
$P_{ij} = 1 - \exp\left(-\frac{w_i w_j}{\sum_k w_k}\right)$.
Note that self-loops are allowed, but they hardly occur when the weight distribution is close to uniform. Such a random graph has a Hazard radius equal to
\begin{equation}
\HazSpec = \frac{\sum_i w_i^2}{\sum_k w_k}.
\end{equation}
Previous results \cite{bollobas2007phase, EJP817} showed that, for such graphs, a giant component exists if and only if $\sum_i{w_i^2} > \sum_i{w_i}$, which is equivalent to $\HazSpec > 1$.
\end{example}

\begin{example}[\textbf{Homogeneous percolation on regular grids}]
Let $\mathcal{G} = (\mathcal{V},\mathcal{E})$ be a regular cubic grid of $n$ nodes in dimension $d$, and $\AdjMat$ its adjacency matrix. The random graph $\RandomG(n,A)$ is the result of homogeneous percolation on $\mathcal{G}$ if $\{A_{ij} : \{i,j\}\in\mathcal{E}\}$ are i.i.d. Bernoulli random variables of fixed parameter $p\in[0,1]$, and $A_{ij} = 0$ otherwise. The Hazard radius of such a network is $\HazSpec = -\rho(\AdjMat)\ln(1-p) \rightarrow_{n\rightarrow +\infty} -2d\ln(1-p)$. For $d>2$, there are no known exact formula for percolation thresholds on cubic grids, although experimental approaches provided high precision numerical estimates \cite{grassberger2003critical}. These estimates seem to coincide rather well with $p = 1-e^{-1/2d}$ (\ie the value such that $\lim_{n\rightarrow +\infty}\HazSpec = 1$) for high-dimensional grids (for $d=13$, $p^* = 0.040$ compared to $1-e^{-1/2d} = 0.038$), while being rather different for lower-dimensional grids (for $d=2$, $p^* = 0.5$ compared to $1-e^{-1/2d} = 0.22$).
\end{example}

\begin{example}[\textbf{Star-shaped network}]
For homogeneous percolation on a star-shaped network centered around $1$, the exact influence of $\mathcal{I} = \{1\}$ can be derived explicitly and we have: $\sigma(\{1\}) = 1+p(n-1)$.
As, for $i<j$, the Hazard matrix coefficients are $\HazMat_{ij} = -\ln(1-p) \one\{i = 1\}$, the Hazard radius is given by $\HazSpec = -\sqrt{n-1} \ln(1-p)$.
When $p = c/\sqrt{n}$, the influence is always sublinear in $n$, and the threshold value is infinite ($c^* = +\infty$). Hence $\lim_{n\rightarrow +\infty}\HazSpec = c = 1$ does not bring any particular change in the behavior of the influence.
\end{example}

\subsection{Random graphs with Local Positive Correlation (LPC)}

The analysis developed in this paper concerns a particular class of random graphs that display correlation at a local scale only.

\begin{definition}[\textbf{Random graphs with Local Positive Correlation (LPC)}]
\label{def:LPC}
We consider a random graph $\RandomG(n, A)$. For any node pair $(i,j)\in\OneToN^2$, we define $A_{-ij}$ to be the subcollection of edge variables $\{A_{kl} ~:~ (k,l)\neq(i,j) \}$, and $\mathcal{N}_{ij} = \{(j,i)\}\cup\{(k,l) : k = i \mbox{ or } l = j\}$ to be the neighborhood that contains the edge $(j,i)$ plus all edges which share with $(i,j)$ either the same head or the same tail. 
The random graph $\RandomG (n, A)$ is a random graph with \emph{Local Positive Correlation} (LPC) if the two following conditions hold:
\begin{enumerate}
\item[(H1)]$\forall i, j, k, l$ such that $(k,l)\notin\mathcal{N}_{ij}$, $A_{ij}$ is independent of $A_{kl}$.
\item[(H2)]$\forall i, j$, the mapping $a \mapsto \Exp{A_{ij} | A_{-ij} \op{=} a}$ is non-decreasing \wrt the natural partial order on $\{0,1\}^{n^2-1}$ (\ie $a\leq a'$ if and only if $\forall i\leq n^2-1, a_i\leq a_i'$).
\end{enumerate}
\end{definition}
The first assumption (H1) can be interpreted as a property of long range independence between remote edges, while the assumption (H2) properly states the  local positive correlation of neigboring edges.
When the random variables $A_{ij}$ are interpreted as indicator variables of the occurrence of transmission events from node $i$ to node $j$ in a diffusion process, then the long range independence assumption implies pairwise independence for transmission events on nonadjacent edges, and the local positive correlation sets positive correlations at the local level for the transmission events on adjacent edges conditionally to the state of all other edges. 

\begin{remark}
Since independence implies positive correlation, all random graphs which assume independence of the edge variables also verify LPC. Hence, most standard models of random networks, including \Erdos and the very general class of \emph{inhomogeneous random graphs} \cite{bollobas2007phase}, verify LPC.
\end{remark}

\medskip

The following lemma indicates that the notion of random graphs with LPC covers in particular homogeneous and inhomogeneous percolation.

\begin{lemma}[\textbf{Random undirected graph}]\label{lem:undirectedLPC}
An undirected random graph $\RandomG (n, A)$ is a random graph with LPC if and only if the edge variables $\{A_{ij} : i < j\}$ are independent.
\end{lemma}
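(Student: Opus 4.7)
The plan is to prove both implications. For the easier $\Leftarrow$ direction, suppose $\{A_{ij} : i < j\}$ are mutually independent. To verify (H1), I note that $(k,l) \notin \mathcal{N}_{ij}$ forces $(k,l) \neq (j,i)$, $k \neq i$, and $l \neq j$, which jointly preclude $\{k,l\} = \{i,j\}$; by undirectedness $A_{kl}$ therefore equals an independent undirected edge variable distinct from $A_{ij}$, giving the required independence. For (H2), the relation $A_{ij} = A_{ji}$ almost surely, combined with the fact that $A_{ji}$ is itself a coordinate of $A_{-ij}$, means the conditional expectation $a \mapsto \Exp{A_{ij} \mid A_{-ij} = a}$ admits the version $f(a) = a_{(j,i)}$, a single-coordinate projection, which is trivially non-decreasing in the product order.

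For the $\Rightarrow$ direction I would show that each undirected edge variable $A_{ij}$ with $i < j$ is independent of the $\sigma$-algebra generated by all other undirected edges; a standard iterative factorization of the joint law (each edge variable being independent of the rest lets one peel off one factor at a time) then yields mutual independence. The combinatorial core is the claim that for every undirected edge $e' = \{k',l'\}$ different from $\{i,j\}$, at least one of the two directed representatives $(k',l')$ or $(l',k')$ lies outside $\mathcal{N}_{ij} = \{(j,i)\} \cup \{(k,l) : k = i \text{ or } l = j\}$. A quick case split verifies this: if $\{k',l'\} \cap \{i,j\} = \emptyset$, both representatives are outside; if $e' = \{i,b\}$ with $b \neq j$, then $(b,i)$ satisfies $k = b \neq i$, $l = i \neq j$, and $(b,i) \neq (j,i)$ since $b \neq j$, hence $(b,i) \notin \mathcal{N}_{ij}$; the symmetric case $e' = \{j,c\}$ with $c \neq i$ yields $(j,c) \notin \mathcal{N}_{ij}$ analogously. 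By undirectedness each $A_{k'l'}$ equals $A_{kl}$ for one such escaping $(k,l)$, so $\sigma(A_{k'l'} : k' < l', \{k',l'\} \neq \{i,j\}) \subseteq \sigma(A_{kl} : (k,l) \notin \mathcal{N}_{ij})$, and (H1) delivers the desired independence.

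The main obstacle is the asymmetric shape of $\mathcal{N}_{ij}$: it contains edges with tail $i$ or head $j$ but not the reverse, so a single fixed orientation of an undirected edge $e'$ does not always escape $\mathcal{N}_{ij}$, and one must deliberately select the directed representative that does. Once the case analysis above is in place, the rest of the forward direction is routine, and (H2) plays no active role in it because $A_{ij} = A_{ji}$ makes that condition automatic; the entire weight of the proof rests on (H1) combined with this combinatorial observation.
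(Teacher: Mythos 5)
Your proof is correct and follows essentially the same route as the paper's: the $(\Leftarrow)$ direction is immediate from independence (with your explicit coordinate-projection version of the conditional expectation making (H2) cleaner than the paper's one-liner), and the $(\Rightarrow)$ direction combines (H1) with the symmetry $A_{ij}=A_{ji}$ to show each undirected edge variable is independent of all the others. The only difference is bookkeeping: the paper applies (H1) to both orientations of the \emph{source} edge and intersects $\mathcal{N}_{ij}\cap\mathcal{N}_{ji}=\{(i,j),(j,i)\}$, whereas you fix one orientation of the source and select, for each other undirected edge, the directed representative that escapes $\mathcal{N}_{ij}$ --- the same idea read in the mirror direction.
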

\begin{proof}
Since independence implies positive correlation, the LPC property is a direct consequence of $\{A_{ij} : i < j\}$ being independent. Now, if an undirected random graph $\RandomG(n,A)$ is a random graph with LPC, then, since $A_{ij}=A_{ji}$, (H1) holds for all $(k,l)\notin\mathcal{N}_{ij}\cap\mathcal{N}_{ji} = \{(i,j),(j,i)\}$, and $\{A_{ij} : i < j\}$ are independent.
\end{proof}

We will see later that the conditions of LPC are fulfilled for many popular random graph models used in epidemiology (\Sec{sec:epidemiology}) and in information propagation (\Sec{sec:IC}).

\section{Nonasymptotic upper bounds on the influence}\label{sec:bounds}
In this section, we provide tight upper bounds on the influence under three scenarios on the set of influencers:
\begin{itemize}
\item[I)] Deterministic (worst-case) scenario: corresponds to a fixed set $\mathcal{I}$ of influencers,
\item[II)] Random A with parameter $n_0<n$: the set $\mathcal{I}$ of influencers is random and drawn according to a uniform distribution $\mathcal{U}(\OneToN, n_0)$ over the subsets of $\OneToN$ of cardinality $n_0$,
\item[III)] Random B with parameter $q \in [0,1]$: the set $\mathcal{I}$ of influencers is random and drawn with a distribution $\mathcal{D}(\OneToN, q)$ such that, for all $i \in \OneToN$, the  random variables $B_i = \one \{ i \in \mathcal{I}\}$ are independent Bernoulli with parameter $q$.
\end{itemize}
In both cases (II) and (III), the influencer set is drawn independently of the particular graph sampled under the random graph model $\RandomG  (n, A)$.

\subsection{Deterministic (worst-case) scenario}
The first result (\Theorem{th:mainResult}) applies to any fixed set of influencers $\mathcal{I}$ such that $\card{\mathcal{I}}=n_0$. Intuitively, this result corresponds to a worst-case scenario since the bound does not depend on $\mathcal{I}$.

\begin{theorem}\label{th:mainResult}
Let $\RandomG  (n, A)$ be a random graph with LPC. For any fixed set of influencers $\mathcal{I}$ such that $\card{\mathcal{I}}=n_0 \leq n$, the influence $\sigma(\mathcal{I})$ is upper bounded by:
\begin{equation}
\sigma(\mathcal{I}) \leq n_0+ \gamma_1\left(\HazSpec,\frac{n_0}{n-n_0}\right) (n-n_0)~,
\end{equation}
where $\gamma_1(\rho,a) = \gamma_1$ is the smallest solution in $[0, 1]$ of the following equation:
\begin{equation}
\gamma_1 - 1 + \exp \left(-\rho \gamma_1 - \frac {\rho a}{\gamma_1}\right) = 0~,
\end{equation}
and $\HazSpec$ is the Hazard radius of the random graph $\RandomG(n, A)$.
\end{theorem}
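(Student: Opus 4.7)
Define $p_i = \Prob{i \in R(\mathcal{I}, A)}$ for each $i \in \OneToN$ (so $p_i = 1$ on $\mathcal{I}$), and set $\tau = \sigma(\mathcal{I}) - n_0 = \sum_{i \notin \mathcal{I}} p_i$, $\gamma = \tau/(n - n_0)$, and $a = n_0/(n - n_0)$. The theorem is equivalent to $\gamma \leq \gamma_1(\HazSpec, a)$. The plan is three-stage: derive a per-node upper bound on $p_i$ using LPC, aggregate it into a scalar inequality via Jensen plus a spectral estimate, and close with a scalar fixed-point comparison.

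\emph{Per-node bound.} For $i \notin \mathcal{I}$, I would introduce the reachable set $R^{(-i)}$ obtained by deleting every in-edge of $i$, so that $i \in R$ iff there exists $j$ with $A_{ji} = 1$ and $j \in R^{(-i)}$. A short inspection of $\mathcal{N}_{j,i}$ shows that, for $i \notin \mathcal{I}$, the event $\{j \in R^{(-i)}\}$ is measurable with respect to edges outside $\mathcal{N}_{j,i}$: out-edges of $j$ are irrelevant to whether $j$ is reached, and out-edges of $i$ cannot contribute to any path in $R^{(-i)}$ since $i$ has no in-edges there. Hypothesis (H1) then makes it independent of $A_{ji}$, while (H2) gives positive association of all edge indicators (the standard consequence of monotone conditional expectations of Bernoulli variables). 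Applying the FKG inequality to the decreasing events $D_j = \{A_{ji} = 0\} \cup \{j \notin R^{(-i)}\}$, whose intersection is exactly $\{i \notin R\}$, yields
\begin{equation*}
\Prob{i \notin R} \geq \prod_j \Prob{D_j} = \prod_j \bigl(1 - p_{ji}\, q_j^{(-i)}\bigr),
\end{equation*}
with $p_{ji} = \Exp{A_{ji}}$ and $q_j^{(-i)} = \Prob{j \in R^{(-i)}} \leq p_j$. The elementary estimate $1 - xy \geq (1-x)^y$ on $[0,1]^2$ and the definition of $\HazMat$ then give the local inequality $p_i \leq 1 - \exp(-\sum_j \HazMat_{ji} p_j)$.

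\emph{Spectral aggregation.} Summing the local bound over $i \notin \mathcal{I}$ and applying Jensen to the concave map $x \mapsto 1 - e^{-x}$ gives $\gamma \leq 1 - \exp(-\bar x)$, where $\bar x = (n-n_0)^{-1} \mathbf{1}_{\bar{\mathcal{I}}}^\top \HazMat^\top q$ and $q$ is the vector of the $p_i$. Splitting $q = \mathbf{1}_{\mathcal{I}} + p_{\bar{\mathcal{I}}}$ produces an inner bilinear term and a cross bilinear term. Bounding both by reducing to the symmetric quadratic form of $\tilde{\HazMat} = (\HazMat + \HazMat^\top)/2$ and using $v^\top \tilde{\HazMat} v \leq \HazSpec \|v\|^2$, together with the coarse estimates $\|p_{\bar{\mathcal{I}}}\|^2 \leq \tau$ (since $p_i \in [0,1]$) and the weighted AM--GM inequality $\sqrt{n_0(n-n_0)} \leq \tfrac{1}{2}(n_0/\gamma + \gamma(n-n_0))$, delivers $\bar x \leq \HazSpec(\gamma + a/\gamma)$, so
\begin{equation*}
\gamma \leq 1 - \exp\!\left(-\HazSpec \gamma - \HazSpec \frac{a}{\gamma}\right).
\end{equation*}

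\emph{Conclusion and main obstacle.} The map $\Phi(x) = 1 - \exp(-\HazSpec x - \HazSpec a/x)$ satisfies $\Phi(0^+) = 1 > 0$ and equals the identity exactly at the roots of the defining equation for $\gamma_1$; since $\gamma_1$ is the smallest such root, the set $\{x \in (0,1] : \Phi(x) \geq x\}$ begins with $(0, \gamma_1]$, which forces $\gamma \leq \gamma_1(\HazSpec, a)$. The most delicate step is the per-node bound: one has to verify both that LPC yields genuine positive association of the joint law of the edge variables (standard but non-trivial via monotone couplings of Bernoulli families) and that the truncation $R^{(-i)}$ really decouples $A_{ji}$ from $\{j \in R^{(-i)}\}$ through (H1) when $i \notin \mathcal{I}$. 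The spectral aggregation is more routine but demands the careful AM--GM balancing above in order to extract the asymmetric $a/\gamma$ contribution, and the final scalar comparison is essentially bookkeeping.
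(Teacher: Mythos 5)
Your first stage (the per-node bound) is essentially the paper's Lemma~\ref{lem:poscorrelated} plus Lemma~\ref{lem:mainLemma} and is sound: FKG via the monotonicity hypothesis (H2), independence of $A_{ji}$ from reachability of $j$ with $i$'s in-edges removed via (H1), monotone domination $q_j^{(-i)}\leq p_j$, and $\ln(1-xy)\geq y\ln(1-x)$. The genuine gap is in your spectral aggregation. By averaging the local bound with \emph{uniform} weights you arrive at the bilinear form $\mathbf{1}_{\bar{\mathcal{I}}}^\top\HazMat^\top q$ with two \emph{different} vectors, and such a form cannot be ``reduced to the symmetric quadratic form'': for $u\neq v$ nonnegative one only has $u^\top\HazMat v\leq 2\,u^\top\frac{\HazMat+\HazMat^\top}{2}v$, and e.g.\ for the nilpotent matrix with a single nonzero entry $\HazMat_{12}=1$ one gets $e_1^\top\HazMat e_2=1$ while $\HazSpec\|e_1\|\|e_2\|=1/2$. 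Worse, even if $\HazMat$ is symmetric so that $u^\top\HazMat v\leq\HazSpec\|u\|\|v\|$ is legitimate, the arithmetic does not close: the inner term gives $\HazSpec\|p_{\bar{\mathcal{I}}}\|\,\sqrt{n-n_0}\leq\HazSpec\sqrt{\gamma}\,(n-n_0)$, and $\HazSpec\sqrt{\gamma}$ is not bounded by $\HazSpec\gamma+\HazSpec a/\gamma$ when $\gamma$ is moderate and $a$ is small (take $n_0=1$, $\gamma=1/4$). So the claimed estimate $\bar x\leq\HazSpec(\gamma+a/\gamma)$ is not established. The paper's proof avoids this precisely by weighting the Jensen step with $u_i=\one\{i\notin\mathcal{I}\}\,\Exp{X_i}$ rather than uniformly: the exponent then becomes the genuine quadratic form $Z^\top\HazMat Z=Z^\top\frac{\HazMat+\HazMat^\top}{2}Z\leq\HazSpec Z^\top Z$, the left-hand side becomes $\sum_{i\notin\mathcal{I}}\Exp{X_i}^2$, and Cauchy--Schwarz ($z\leq y(n-n_0)$ with $y=(Z^\top Z-n_0)/(|Z|_1-n_0)$) converts the resulting self-consistent inequality into the stated bound. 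This self-weighting is the key idea your proposal is missing.

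A secondary, fixable issue: your closing fixed-point step asserts that $\{x\in(0,1]:\Phi(x)\geq x\}$ ``begins with $(0,\gamma_1]$'' and concludes $\gamma\leq\gamma_1$, but $\Phi(x)=1-\exp(-\HazSpec x-\HazSpec a/x)$ is neither convex nor concave, so that set need not be an interval and membership in it does not by itself force $\gamma\leq\gamma_1$. The paper's remedy is to argue by contradiction and freeze the $a/x$ argument at $\gamma_1$, so that the comparison function $x\mapsto x-1+\exp(-\HazSpec x-c)$ with $c$ constant is convex, vanishes at $\gamma_1$, and is negative at $0$, hence strictly positive beyond $\gamma_1$.
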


A refined analysis of the parameter $\gamma_1$ in the previous theorem leads to the description of the behavior of the influence with respect to three regimes as shown in the following corollary.

\begin{corollary}\label{cor:simpleBounds}
Under the same conditions as in \Theorem{th:mainResult}, we have:
\[
\displaystyle
\sigma(\mathcal{I}) \leq
\left\{
\begin{array}{ll}
 \displaystyle n_0+\sqrt{\frac{\HazSpec}{1-\HazSpec}} \sqrt{n_0(n-n_0)}~, & \text{if } \HazSpec < 1 - \delta_n \\
 &\\
  \displaystyle n_0+ 2^{4/3} n_0^{1/3} (n - n_0)^{2/3}~, &\text{if }  |\HazSpec - 1| \leq \delta_n \\
  &\\
  \displaystyle n_0 + (n-n_0)\gamma_0(\HazSpec) + c_n \sqrt{n_0(n-n_0)}~,  & \text{if } \HazSpec > 1 + \delta_n
\end{array}
\right.
\]
where  $\delta_n =\displaystyle \left(\frac{n_0}{4(n - n_0)}\right)^{1/3} $, and $c_n = \displaystyle \sqrt{\frac{(1 - \gamma_0(\HazSpec))\HazSpec}{1 - (1-\gamma_0(\HazSpec))\HazSpec}}$.
\end{corollary}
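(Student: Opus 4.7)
The plan is to reduce all three statements to a bound on $\gamma_1 := \gamma_1(\HazSpec, a)$ with $a = n_0/(n-n_0)$, by invoking \Theorem{th:mainResult} once and then estimating $\gamma_1$ in each regime separately; the $(n-n_0)$ factor together with $a(n-n_0)^2 = n_0(n-n_0)$ automatically converts the resulting $\gamma_1$-estimates into the advertised right-hand sides. My crucial starting point, used in every regime, is to take the logarithm of the identity $1 - \gamma_1 = \exp(-\HazSpec\gamma_1 - \HazSpec a/\gamma_1)$ and invoke $-\ln(1-x) \geq x + x^2/2$ for $x\in[0,1)$. Multiplying the resulting inequality by $\gamma_1$ gives the master inequality
\begin{equation*}
\HazSpec a \;\geq\; (1 - \HazSpec)\gamma_1^2 \;+\; \gamma_1^3/2.
\end{equation*}

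In the subcritical regime $\HazSpec < 1 - \delta_n$, both summands on the right are nonnegative, so dropping the cubic term yields $\gamma_1 \leq \sqrt{\HazSpec a/(1-\HazSpec)}$, which is exactly the advertised bound after multiplication by $n-n_0$. In the critical regime I would split on the sign of $1 - \HazSpec$. If $\HazSpec \leq 1$, the master inequality gives $\gamma_1^3 \leq 2a$, hence $\gamma_1 \leq 2^{1/3}a^{1/3} \leq 2^{4/3}a^{1/3}$. If $\HazSpec > 1$ I argue dichotomously: either $\gamma_1 \leq 4\delta_n$, and since $\delta_n = (a/4)^{1/3}$ this already gives $\gamma_1 \leq 2^{4/3}a^{1/3}$; or $\gamma_1 > 4\delta_n \geq 4(\HazSpec - 1)$, in which case $(\HazSpec - 1)\gamma_1^2 < \gamma_1^3/4$ and the master inequality yields $\gamma_1^3 \leq 4\HazSpec a \leq 8a$ (for $\delta_n \leq 1$), giving $\gamma_1 \leq 2a^{1/3} \leq 2^{4/3}a^{1/3}$. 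The corner case $\delta_n > 1$ is handled trivially since then $a > 4$ makes $2^{4/3}a^{1/3} > 4 \geq 1 \geq \gamma_1$.

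The supercritical regime requires a different tactic: I would parameterize $\gamma_1 = \gamma_0(\HazSpec) + \epsilon$. A comparison argument using the strict inequality $\gamma - 1 + e^{-\HazSpec\gamma - \HazSpec a/\gamma} < \gamma - 1 + e^{-\HazSpec\gamma}$, together with the fact that $\gamma - 1 + e^{-\HazSpec\gamma} < 0$ on $(0, \gamma_0)$, shows that the smallest positive root of the former must satisfy $\gamma_1 > \gamma_0$, so $\epsilon > 0$. Using $1 - \gamma_0 = e^{-\HazSpec\gamma_0}$, the defining equation for $\gamma_1$ simplifies to $e^{-\HazSpec\epsilon - \HazSpec a/(\gamma_0 + \epsilon)} = 1 - \epsilon/(1-\gamma_0)$; taking logarithms, invoking $\ln(1-x) \leq -x$, and using $\gamma_0 + \epsilon \geq \epsilon$ would then yield
\begin{equation*}
\epsilon^2 \bigl[1 - \HazSpec(1-\gamma_0)\bigr] \;\leq\; \HazSpec a (1-\gamma_0),
\end{equation*}
which rearranges to exactly $\epsilon \leq c_n\sqrt{a}$. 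The main technical obstacle is proving the positivity of the bracketed factor $1 - \HazSpec(1-\gamma_0)$: this quantity equals the derivative at $\gamma_0$ of $\gamma \mapsto \gamma - 1 + e^{-\HazSpec\gamma}$, which is strictly positive at the transversal crossing $\gamma_0 > 0$ whenever $\HazSpec > 1$, by convexity of $\gamma \mapsto e^{-\HazSpec\gamma}$. Once this is established, multiplying $\epsilon \leq c_n\sqrt{a}$ by $n-n_0$ produces the supercritical bound.
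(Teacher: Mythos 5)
Your proof is correct, but it takes a genuinely different route from the paper's in the subcritical and critical regimes. The paper first establishes two general upper bounds on the Hazard function in \Lemma{lem:gammaBounds} (the linearization $\gamma(\rho,a)\leq\gamma_0(\rho)+\frac{a(1-\gamma_0(\rho))}{1-\rho(1-\gamma_0(\rho))}$ and, by integrating a bound on $\partial\gamma/\partial a$, the bound $\gamma(\rho,a)\leq\gamma_0(\rho)+\sqrt{2a}\min\{1,\rho^{-1/2}\}$), feeds them into the self-consistency relation $\gamma_1=\gamma\left(\HazSpec,\HazSpec n_0/(\gamma_1(n-n_0))\right)$, and in the critical window additionally invokes \Lemma{lem:gammaSlope} ($\gamma_0(\rho)\leq 2(\rho-1)$) to absorb the $\gamma_0(\HazSpec)$ term into the constant $2^{4/3}$. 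Your master inequality $\HazSpec a\geq(1-\HazSpec)\gamma_1^2+\gamma_1^3/2$, obtained from $-\ln(1-x)\geq x+x^2/2$, replaces both appendix lemmas in those two regimes: it delivers the subcritical bound by dropping the cubic term, and the critical bound by the dichotomy on $\gamma_1$ versus $4\delta_n$, which sidesteps the slope bound on $\gamma_0$ entirely. In the supercritical regime your perturbation $\gamma_1=\gamma_0+\epsilon$ with $\ln(1-x)\leq-x$ is, after unwinding, the same linearization as the paper's first bound in \Lemma{lem:gammaBounds} combined with $\gamma_0+\epsilon\geq\epsilon$; you are in fact slightly more careful than the paper, which divides by $1-\HazSpec(1-\gamma_0(\HazSpec))$ without justifying its positivity, whereas you establish it via the strict convexity of $\gamma\mapsto\gamma-1+e^{-\HazSpec\gamma}$ and the negativity of its derivative at $0$. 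What your approach buys is self-containment and an explicit treatment of that sign issue; what the paper's buys is reusability, since \Lemma{lem:gammaBounds} and \Lemma{lem:gammaSlope} are also needed for Corollaries~\ref{cor:uniformsimpleBounds} and~\ref{cor:randomsimpleBounds} and for the percolation results.
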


\begin{remark}
When considering a fixed set of influencers $\mathcal{I}$, one can remove the ingoing edges of the influencers in order to get slightly improved results. In such a case, the Hazard matrix is replaced by $\HazMat_{ij}(\mathcal{I}) = \one\{j\notin \mathcal{I}\}\cdot \HazMat_{ij}$.
\end{remark}

\subsection{Random influencer set with fixed size}
The second result (\Theorem{th:uniformResult}) applies in the case where the set of influencers is drawn from a uniform distribution over the subpartition of sets of $n_0$ nodes chosen amongst $n$. This result corresponds to the average-case scenario in a setting where the initial influencer nodes are not known and drawn independently of the random graph.

\begin{theorem}
\label{th:uniformResult}
Let $\RandomG (n, A)$ be a random graph with LPC. Assume the set $\mathcal{I}$ of influencers  is  drawn from an independent and uniform distribution $\mathcal{U}(\OneToN, n_0)$ over $\mathcal{P}_{n_0}(\OneToN)$. Then, we have the following result:
\begin{equation}
\Exp{\sigma(\mathcal{I})} \leq n_0 + \gamma\left(\HazSpec, \frac{ n_0 \HazSpec}{n-n_0}\right) (n-n_0)~,
\end{equation}
where $\gamma(\rho,a)$ is defined as in \Definition{def:gamma}.
\end{theorem}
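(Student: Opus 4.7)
The plan is to work directly with the joint probability $q_i = \Prob{i \in R(\mathcal{I},A)}$ (averaging over both $\mathcal{I}$ and the random graph) and to reduce the target bound to a scalar fixed-point inequality driven by $\HazSpec$. Setting $s = \sum_i q_i = \Exp{\sigma(\mathcal{I})}$ and $\gamma^\star = (s - n_0)/(n - n_0)$, the goal $s \leq n_0 + \gamma(\HazSpec, a)(n-n_0)$ with $a = \HazSpec n_0/(n-n_0)$ becomes $\gamma^\star \leq \gamma$, and since $\gamma$ is the unique root in $[0,1]$ of $x - 1 + \exp(-\HazSpec x - a) = 0$, it suffices to establish the scalar inequality
\begin{equation*}
n - s \;\geq\; (n - n_0)\exp\!\left(-\frac{\HazSpec\, s}{n - n_0}\right),
\end{equation*}
after which monotonicity of the defining function of $\gamma$ on $[0,1]$ closes the loop.

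The first technical step is a pointwise self-consistency inequality. For each $i$, I would use the LPC hypotheses together with the independence of $\mathcal{I}$ from $A$ to show
\begin{equation*}
1 - q_i \;\geq\; \left(1 - \frac{n_0}{n}\right)\exp\!\left(-\sum_j \HazMat_{ji}\, q_j\right).
\end{equation*}
The prefactor is $\Prob{i \notin \mathcal{I}} = 1 - n_0/n$, which is the only place where the uniformity of $\mathcal{I}$ over $\mathcal{P}_{n_0}(\OneToN)$ enters directly. The exponential factor is obtained by (i) conditioning on $i \notin \mathcal{I}$ and applying (H2), which yields a FKG-type bound $\Prob{i \notin R \mid i \notin \mathcal{I}} \geq \prod_j (1 - \Exp{A_{ji}}\, q_j)$ through a cavity-style argument on the edges in $\mathcal{N}_{\cdot,i}$ (here (H1) is needed to decouple the non-adjacent edges), and (ii) applying the elementary inequality $1 - pq \geq (1-p)^q = \exp(-Hq)$ with $H = -\ln(1-p)$ to each factor.

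The second step is to sum the self-consistency inequality over $i$ and convert it into the scalar form. Applying Jensen to $x \mapsto e^{-x}$ under the uniform measure on $i$ and then invoking the spectral bound attached to $\HazSpec = \rho((\HazMat + \HazMat^\top)/2)$ on the linear form $\sum_{i,j} \HazMat_{ji} q_j = \langle q, \HazMat^\top\one\rangle$ should yield exactly the exponent $\HazSpec\, s/(n - n_0)$. \emph{This spectral step is the main obstacle}: the Hazard radius naturally controls quadratic forms in $q$, whereas what appears after Jensen is a linear form with the constant vector $\one$ on one side. Bridging the gap requires exploiting two structural facts at once, namely that every $q_i \geq n_0/n$ (since each node belongs to $\mathcal{I}$ with marginal probability $n_0/n$) and that the symmetrization in the definition of $\HazSpec$ permits decomposing $\langle q, \HazMat^\top\one\rangle$ into a symmetric Rayleigh-controlled part and a skew part that can be handled separately. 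Once this spectral inequality is in hand, the final step reduces to comparing $\gamma^\star$ to the unique root $\gamma$ of the defining equation, which is immediate from the sign analysis of $x \mapsto x - 1 + \exp(-\HazSpec x - a)$ on $[0,1]$.
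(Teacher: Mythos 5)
Your reduction to the scalar inequality $n-s\geq(n-n_0)\exp(-\HazSpec s/(n-n_0))$ is a correct restatement of the target, but the route you propose does not go through, and the obstacle you flag yourself is fatal rather than technical. After averaging the pointwise inequality over $i$ with uniform weights and applying Jensen, you are left needing a bound of the form $\one^\top\HazMat^\top q\leq C\HazSpec\,\one^\top q$ with $C$ of order $n/(n-n_0)$. This is false in general: take $\HazMat$ to be a symmetric star, $\HazMat_{1j}=\HazMat_{j1}=c$ for $j\geq 2$, so that $\HazSpec=c\sqrt{n-1}$, and take $q$ with $q_1=1$ and $q_j=n_0/n$ otherwise (consistent with the constraint $q_j\geq n_0/n$ you invoke). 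Then $\one^\top\HazMat^\top q\approx cn$ while $\HazSpec\,\one^\top q\approx c\sqrt{n}\,(1+n_0)$, so the required inequality fails whenever $n_0\ll\sqrt{n}$. The lower bound $q_i\geq n_0/n$ and the symmetric/skew decomposition cannot repair this, because the Hazard radius genuinely controls only Rayleigh quotients $x^\top\HazMat x/x^\top x$, not linear functionals against $\one$. The paper's proof sidesteps the issue entirely by applying Jensen with the \emph{self-consistent} weights $q_i/|q|_1$ rather than uniform weights: this produces the quadratic form $q^\top\HazMat q/|q|_1\leq\HazSpec\,q^\top q/|q|_1$, the fixed-point argument is then run on the ratio $w=q^\top q/|q|_1$, and the influence is recovered only at the very end via Cauchy--Schwarz, $|q|_1\leq n\,q^\top q/|q|_1=nw$. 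That choice of weights is the key idea your proposal is missing.

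There is also a smaller but real gap in your first step. Your pointwise inequality $1-q_i\geq(1-n_0/n)\exp(-\sum_j\HazMat_{ji}q_j)$ places the \emph{unconditional} probabilities $q_j$ inside the exponential, but the conditioning on $i\notin\mathcal{I}$ cannot be discarded for free: under $\mathcal{U}(\OneToN,n_0)$ the events $\{i\in\mathcal{I}\}$ and $\{j\in\mathcal{I}\}$ are negatively correlated, so $\Exp{X_j\mid i\notin\mathcal{I}}\geq\Exp{X_j}$ in general, which pushes the exponential in the wrong direction. The paper's \Lemma{lem:mainLemmaUniform} keeps the conditioning and controls it by the crude bound $\Exp{X_j\mid i\notin\mathcal{I}}\leq\frac{n}{n-n_0}\Exp{X_j}$; this is precisely where the extra factor $\frac{n}{n-n_0}$ in the exponent of that lemma (and, ultimately, the argument $\frac{n_0\HazSpec}{n-n_0}$ of $\gamma$ in the theorem) comes from. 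As written, your version of the pointwise inequality is not justified and is most likely false.
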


\begin{corollary}\label{cor:uniformsimpleBounds}
Under the same conditions as in \Theorem{th:uniformResult}, we have:
\[
\displaystyle
\Exp{\sigma(\mathcal{I})} \leq
\left\{
\begin{array}{ll}
 \displaystyle \frac{n_0}{1-\HazSpec}~,  & \text{if } \HazSpec < 1 - \delta'_n \\
 &\\
  \displaystyle n_0 + \sqrt{8 n_0(n - n_0)}~, &\text{if }  |\HazSpec - 1| \leq \delta'_n \\
  &\\
  \displaystyle (n-n_0)\gamma_0(\HazSpec) + \frac{n_0}{1 - \HazSpec(1-\gamma_0(\HazSpec))}~, & \text{if } \HazSpec > 1 + \delta'_n
\end{array}
\right.
\]
where  $\displaystyle \delta'_n = \sqrt{\frac{n_0}{2(n - n_0)}}$.
\end{corollary}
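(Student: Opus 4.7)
The starting point is \Theorem{th:uniformResult}, which gives $\Exp{\sigma(\mathcal{I})} \leq n_0 + \gamma\,(n-n_0)$ where $\gamma := \gamma(\HazSpec, a)$ with $a = \frac{n_0 \HazSpec}{n-n_0}$ is the unique root in $[0,1]$ of $\gamma - 1 + e^{-\HazSpec\gamma - a} = 0$. The corollary is thus a purely analytic study of this implicit equation in three regimes, and the plan is to convert the defining identity $1 - \gamma = e^{-\HazSpec\gamma - a}$ into tractable one-sided inequalities using successively sharper convexity-type estimates of the exponential.

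For the subcritical regime the plan is to apply the first-order bound $e^{-x} \geq 1 - x$ to $1 - \gamma = e^{-\HazSpec\gamma - a}$. This immediately yields $\gamma(1 - \HazSpec) \leq a$, hence $\gamma \leq \frac{a}{1-\HazSpec} = \frac{n_0\HazSpec}{(n-n_0)(1-\HazSpec)}$, which, plugged back into the theorem, collapses algebraically to $\frac{n_0}{1-\HazSpec}$. No additional work is required here.

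For the supercritical regime the plan is to compare $\gamma$ to $\gamma_0 := \gamma_0(\HazSpec)$, which satisfies $1-\gamma_0 = e^{-\HazSpec\gamma_0}$. Subtracting the two defining equations produces the clean identity
\begin{equation*}
\gamma - \gamma_0 = (1-\gamma_0)\bigl(1 - e^{-\HazSpec(\gamma - \gamma_0) - a}\bigr).
\end{equation*}
Two preliminary facts need to be verified: (i) $\gamma \geq \gamma_0$, obtained by evaluating $f(x) := x - 1 + e^{-\HazSpec x - a}$ at $x = \gamma_0$ and noting $f(\gamma_0) = e^{-\HazSpec \gamma_0}(e^{-a} - 1) < 0$, combined with convexity of $f$; and (ii) $1 - \HazSpec(1-\gamma_0) > 0$ in the supercritical regime, which follows from $\gamma_0$ being the unique positive root of the $a=0$ equation (so $f'(\gamma_0) > 0$ when $\HazSpec > 1$). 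Applying $1 - e^{-x} \leq x$ to the identity above and solving for $\gamma - \gamma_0$ gives $\gamma - \gamma_0 \leq \frac{(1-\gamma_0)a}{1 - \HazSpec(1-\gamma_0)}$, and substitution into Theorem~\ref{th:uniformResult} yields the claimed bound after the same type of algebraic simplification as in the subcritical case.

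For the critical regime the plan is to use the sharper estimate $-\ln(1-\gamma) \geq \gamma + \gamma^2/2$ on $[0,1)$ (proved by differentiating $h(\gamma) := -\ln(1-\gamma) - \gamma - \gamma^2/2$ and noting $h'(\gamma) = \gamma^2/(1-\gamma) \geq 0$). Since $\HazSpec\gamma + a = -\ln(1-\gamma)$, this produces the quadratic inequality $\gamma^2 \leq 2(\HazSpec - 1)\gamma + 2a$, whose positive root gives $\gamma \leq (\HazSpec - 1) + \sqrt{(\HazSpec - 1)^2 + 2a}$. Under $|\HazSpec - 1| \leq \delta'_n$ with $2(\delta'_n)^2 = \frac{n_0}{n-n_0}$, one has $a \leq 2(\delta'_n)^2(1 + \delta'_n)$, so $\sqrt{(\HazSpec-1)^2 + 2a} \leq 3\delta'_n$ and therefore $\gamma \leq 4\delta'_n$, which converts to $\gamma(n-n_0) \leq \sqrt{8 n_0(n-n_0)}$. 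The main obstacle is precisely this last step: the numerical factor $\sqrt{8}$ is only clean when $\delta'_n$ is small enough that $\HazSpec \leq 2$, so the plan is to dispose separately of the regime $n_0 \gtrsim n$ (where the trivial bound $\gamma \leq 1$ already gives $\Exp{\sigma(\mathcal{I})} \leq n \leq n_0 + \sqrt{8n_0(n-n_0)}$) and then concentrate on $n_0/(n-n_0)$ small, where the quadratic estimate tightens automatically.
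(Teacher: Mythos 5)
Your proposal is correct, and its subcritical and supercritical parts coincide with the paper's route: the paper funnels both regimes through the first inequality of \Lemma{lem:gammaBounds}, $\gamma(\rho,a)\le\gamma_0(\rho)+\frac{a(1-\gamma_0(\rho))}{1-\rho(1-\gamma_0(\rho))}$, whose proof is exactly your linearization of $1-\gamma=e^{-\HazSpec\gamma-a}$ around $\gamma_0$ via $e^{-x}\ge 1-x$ (with $\gamma_0=0$ in the subcritical case), followed by the same algebraic collapse of $n_0+(n-n_0)(\gamma-\gamma_0)$ into $\frac{n_0}{1-\HazSpec(1-\gamma_0)}$. Where you genuinely diverge is the critical regime. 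The paper instead integrates the differential inequality $\frac{\partial\gamma}{\partial a}\le\frac{1}{1-\rho(1-\gamma)}$ to obtain $\gamma\le\gamma_0+\sqrt{2a}\min\{1,\rho^{-1/2}\}$, and then separately invokes \Lemma{lem:gammaSlope} ($\gamma_0(\rho)\le 2(\rho-1)$, proved via concavity of $\gamma_0$) to absorb the $\gamma_0$ term; each of the two summands contributes $\sqrt{2n_0/(n-n_0)}$, giving the $\sqrt{8}$. Your second-order estimate $-\ln(1-\gamma)\ge\gamma+\gamma^2/2$ yields the quadratic $\gamma^2\le 2(\HazSpec-1)\gamma+2a$ and hence $\gamma\le(\HazSpec-1)+\sqrt{(\HazSpec-1)^2+2a}$ in one stroke, subsuming both the $\sqrt{2a}$ behavior and the slope bound on $\gamma_0$ without any appeal to $\gamma_0$ or its concavity; this is more elementary and self-contained. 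The price is the factor $\HazSpec$ hiding inside $a=\HazSpec n_0/(n-n_0)$, which the paper neutralizes with the $\min\{1,\rho^{-1/2}\}$ factor and you neutralize with the case split on $\delta'_n\le 1$ versus the trivial bound $\gamma\le 1$ when $n_0$ is a constant fraction of $n$ --- your split is airtight (if $\delta'_n>1$ then $n-n_0<n_0/2$, so $n\le n_0+\sqrt{8n_0(n-n_0)}$ trivially), so both arguments land on the same constant.
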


\subsection{Random influencer set with random size}
The third result (\Theorem{th:randomResult}) applies in the case where each node of the network is an influencer independently and with a fixed probability $q\in[0,1]$. This result corresponds to the randomized scenario in which the initial influencer nodes are not known and drawn independently of the random graph.

\begin{theorem}\label{th:randomResult}
Let $\RandomG(n, A)$ be a random graph with LPC. Assume each node is an influencer with independent probability $q\in[0,1]$ and denote by $\mathcal{I}\sim \mathcal{D}(\OneToN, q)$ the random set of influencers that is drawn. Then, we have the following result:
\begin{equation}
\Exp{\sigma(\mathcal{I})} \leq \gamma(\HazSpec, -\ln(1-q)) n,
\end{equation}
where $\gamma(\rho,a)$ is defined as in \Definition{def:gamma}.
\end{theorem}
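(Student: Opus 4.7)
My plan proceeds by reducing Theorem~\ref{th:randomResult} to a single-influencer problem on an augmented graph, then adapting the proof technique of Theorem~\ref{th:mainResult} to leverage the special structure of that augmentation.

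Consider the random graph $\tilde G$ on the vertex set $\{0\} \cup \OneToN$ whose adjacency matrix agrees with $A$ on $\OneToN^2$, has $\tilde A_{i,0} = 0$ for all $i$, and has $\tilde A_{0,j} = B_j$ for $j \in \OneToN$, where the $B_j$'s are i.i.d.\ Bernoulli$(q)$, independent of $A$. A direct check of (H1) and (H2) shows $\tilde G$ is a random graph with LPC: the source edges are independent of every edge outside their neighborhood, and their conditional expectation given all other edges equals $q$, which is trivially non-decreasing. Setting $\mathcal I = \{j : B_j = 1\}$ gives $\mathcal I \sim \mathcal D(\OneToN, q)$ and, via Definition~\ref{def:reachableSet}, $R(\{0\}, \tilde A) = \{0\} \cup R(\mathcal I, A)$, since a path from $0$ to $i \neq 0$ in $\tilde G$ must begin with an edge $(0,j)$ with $j \in \mathcal I$ and then continue as a path in $G$. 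Consequently $\Exp{\sigma(\mathcal I)} = \Exp{|R(\{0\}, \tilde A)|} - 1$.

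Applying Theorem~\ref{th:mainResult} to $\tilde G$ with $\mathcal I_{\mathrm{aug}} = \{0\}$ naively would involve the Hazard radius $\tilde \rho$ of $\tilde G$, which generally exceeds $\HazSpec$: the source row/column of the symmetrized Hazard matrix of $\tilde G$ is a rank-one perturbation of order $\sqrt n$ (for instance, when $\HazMat = 0$ one finds $\tilde \rho = \sqrt n\,(-\ln(1-q))/2$), leading to a bound of order $n$, much weaker than $\gamma(\HazSpec, -\ln(1-q))\, n$. Instead, I re-derive the fixed-point step of Theorem~\ref{th:mainResult} directly on $\tilde G$, treating the source edges separately from those of $G$: because the source row consists of independent Bernoulli$(q)$ entries (independent of $A$), its contribution to the probability of non-reachability factors as $(1-q)$, while the contribution of the original edges is controlled spectrally by $\HazSpec$ via the LPC property. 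The target inequality is
\begin{equation*}
n - \Exp{|S|} \;\geq\; n(1-q)\, \exp\!\left(-\HazSpec\, \Exp{|S|}/n\right),
\end{equation*}
where $S = R(\{0\}, \tilde A) \cap \OneToN$. Setting $\gamma = \Exp{|S|}/n$ this reads $1 - \gamma \geq (1-q)\, e^{-\HazSpec \gamma}$, so by Definition~\ref{def:gamma}, $\gamma \leq \gamma(\HazSpec, -\ln(1-q))$, yielding the theorem.

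The main obstacle is the rigorous derivation of this fixed-point inequality. It is the augmented-graph analogue of the inequality underlying Theorem~\ref{th:uniformResult}, but with denominator $n$ (rather than $n - |\mathcal I|$) inside the exponential, reflecting that in the Bernoulli scenario every node is equiprobably a potential source, so the spectral contribution of $\HazSpec$ is shared across all $n$ nodes rather than on a distinguished complement. The argument should parallel that of Theorem~\ref{th:mainResult}, combining (i) the LPC property of $\tilde G$ in the form of an FKG-type inequality for the decreasing event that no active path exists, (ii) the path expansion of Definition~\ref{def:reachableSet}, and (iii) the independence of the source row, which lets the $(1-q)^n$ prefactor emerge cleanly after taking expectations.
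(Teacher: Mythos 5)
Your reduction to a single source node $0$ with independent Bernoulli$(q)$ outgoing edges is valid (the augmented graph is indeed LPC, and $R(\{0\},\tilde A)\setminus\{0\}=R(\mathcal{I},A)$), and your final step from the fixed-point inequality to the bound via Definition~\ref{def:gamma} is also correct. The gap is in the middle: the target inequality
\[
\frac{\Exp{|S|}}{n} \;\leq\; 1-(1-q)\exp\!\left(-\HazSpec\,\frac{\Exp{|S|}}{n}\right)
\]
cannot be obtained by ``paralleling the fixed-point step of Theorem~\ref{th:mainResult}'' with the \emph{uniform} average $\Exp{|S|}/n=\frac{1}{n}\sum_i\Exp{X_i}$ in the exponent. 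Starting from the per-node inequality $\Exp{X_i}\leq 1-(1-q)\exp(-\sum_j\HazMat_{ji}\Exp{X_j})$ and averaging uniformly over $i$ with Jensen, the exponent becomes the bilinear form $\frac{1}{n}\one^\top\HazMat^\top Z$ with $Z_i=\Exp{X_i}$, and you would need $\one^\top\HazMat^\top Z\leq\HazSpec\,|Z|_1$, i.e.\ that every row sum of $\HazMat$ is at most $\HazSpec$. This is false in general: for a star-shaped Hazard matrix with $\HazMat_{1j}=\HazMat_{j1}=h$ the central row sum is $h(n-1)$ while $\HazSpec=h\sqrt{n-1}$. The Hazard radius only controls the \emph{quadratic} form, $Z^\top\HazMat Z\leq\HazSpec\,Z^\top Z$.

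This is precisely why the paper averages the per-node inequality with weights $Z_i$ rather than uniformly: that choice produces $y\leq 1-(1-q)e^{-\HazSpec y}$ for the self-weighted average $y=Z^\top Z/|Z|_1$, hence $y\leq\gamma(\HazSpec,-\ln(1-q))$, and Cauchy--Schwarz ($|Z|_1\leq n\,Z^\top Z/|Z|_1$) then delivers the theorem. Your target inequality is in fact true, but only as a \emph{consequence} of the theorem (by concavity of $x\mapsto 1-(1-q)e^{-\HazSpec x}$ below its fixed point); it is not available as a stepping stone by the route you describe. Note also that the paper dispenses with the augmented graph: it conditions on $\mathcal{I}$, applies Lemma~\ref{lem:mainLemma}, uses Jensen for conditional expectations, and removes the conditioning on $\{i\notin\mathcal{I}\}$ via the positive correlation of $X_j$ and $\one\{i\in\mathcal{I}\}$; your independent source row plays the same role, but the weighting issue is identical either way and is the step you still need.
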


\begin{corollary}\label{cor:randomsimpleBounds}
Under the same conditions as in \Theorem{th:randomResult}, we have:
\[
\displaystyle
\Exp{\sigma(\mathcal{I})} \leq
\left\{
\begin{array}{ll}
 \displaystyle \frac{-\ln(1-q)n}{1-\HazSpec}~,  & \text{if } \HazSpec < 1 - d_q \\
 &\\
  \displaystyle n\sqrt{-8\ln(1-q)}~, &\text{if }  |\HazSpec - 1| \leq d_q \\
  &\\
  \displaystyle n\gamma_0(\HazSpec) + \frac{-\ln(1-q)(1-\gamma_0(\HazSpec))n}{1 - \HazSpec(1-\gamma_0(\HazSpec))}~, & \text{if } \HazSpec > 1 + d_q
\end{array}
\right.
\]
where  $d_q = \displaystyle \sqrt{\frac{-\ln(1-q)}{2}}$.
\end{corollary}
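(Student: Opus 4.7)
The plan is to invoke Theorem~\ref{th:randomResult} and reduce to three scalar bounds on the Hazard function. Setting $\rho = \HazSpec$ and $a = -\ln(1-q)$, Theorem~\ref{th:randomResult} gives $\Exp{\sigma(\mathcal{I})} \leq \gamma(\rho, a)\,n$, so it suffices to upper bound $\gamma(\rho, a)$ in each regime. The central observation is that $\gamma(\rho,a)$ is the unique root in $(0,1)$ of the strictly convex function $f(\gamma) = \gamma - 1 + \exp(-\rho\gamma - a)$, which satisfies $f(0) = e^{-a} - 1 < 0$ and $f(1) = e^{-\rho-a} > 0$; hence $f(B) \geq 0$ for any $B\in[0,1]$ immediately yields $\gamma(\rho, a) \leq B$. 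Each regime bound will therefore come from plugging the claimed $B$ into $f$ and applying progressively sharper scalar inequalities for $e^{-x}$.

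For the subcritical regime $\rho < 1 - d_q$, plugging $B = a/(1-\rho)$ into $f$ and using $e^{-x} \geq 1 - x$ collapses $f(B)$ to $(1-\rho)B - a = 0$, yielding $\gamma(\rho,a) \leq a/(1-\rho)$ for every $\rho < 1$. For the supercritical regime $\rho > 1 + d_q$, I would introduce the shift $\delta = \gamma(\rho, a) - \gamma_0(\rho)$, which is nonnegative because $f$ is pointwise smaller than its $a=0$ analogue and hence its positive root is pushed to the right of $\gamma_0(\rho)$. Using the defining identity $e^{-\rho\gamma_0(\rho)} = 1 - \gamma_0(\rho)$, the equation for $\gamma$ rearranges to $\delta = (1 - \gamma_0(\rho))\,(1 - e^{-\rho\delta - a})$; applying $1 - e^{-x} \leq x$ and solving linearly in $\delta$ yields the advertised bound, with the denominator $1 - \rho(1-\gamma_0(\rho)) = 1 - \rho e^{-\rho\gamma_0(\rho)}$ strictly positive by stability of the positive fixed point of $\gamma \mapsto 1 - e^{-\rho\gamma}$ for $\rho > 1$.

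The critical regime $|\rho - 1| \leq d_q$ is the delicate step: both arguments above degenerate as $\rho \to 1$, since the denominators $1 - \rho$ and $1 - \rho(1-\gamma_0(\rho))$ both vanish. Here I would replace the linear bound on $e^{-x}$ by the cubic bound $e^{-x} \geq 1 - x + x^2/2 - x^3/6$ valid on $[0,\infty)$, and evaluate $f$ at $B = \sqrt{8a}$. This produces $f(B) \geq (1-\rho)B - a + \tfrac{1}{2}(\rho B + a)^2 - \tfrac{1}{6}(\rho B + a)^3$. Using $|\rho - 1| \leq \sqrt{a/2}$ to lower bound $(1-\rho)B \geq -2a$ and to control $(\rho B + a)^2 \geq 8a - O(a^{3/2})$, the leading quadratic term $\tfrac{1}{2}\cdot 8a = 4a$ should dominate both the linear loss $-3a$ and the cubic correction, giving $f(B) \geq 0$ over the relevant range $a \leq 1/8$ (above which $\sqrt{8a} \geq 1$ and the bound is trivial). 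The main obstacle will be carrying the cross-terms in $(\rho B + a)^2$ and $(\rho B + a)^3$ with enough bookkeeping to obtain a nonasymptotic inequality rather than only an asymptotic one as $a \to 0$; the constant $\sqrt{8}$ in the statement is chosen precisely to leave sufficient slack for this verification, which is exactly the type of Hazard-function estimate collected in Appendix~\ref{appendix:gamma}.
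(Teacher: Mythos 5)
Your proposal is correct, and your subcritical and supercritical arguments are, up to packaging, exactly the content of the paper's \Lemma{lem:gammaBounds} (its first bound, \Eq{eq:gammaApprox1}): the paper performs the same manipulation $\gamma = 1 - e^{-\rho\gamma_0}e^{-\rho(\gamma-\gamma_0)-a}$ followed by $e^{-x}\geq 1-x$, then specializes to $\gamma_0=0$ for the subcritical case; your justification that $1-\rho(1-\gamma_0(\rho))>0$ for $\rho>1$ is a fact the paper also relies on tacitly. Where you genuinely diverge is the critical regime. The paper does not evaluate $f$ at $\sqrt{8a}$; it proves the generic bound $\gamma(\rho,a)\leq\gamma_0(\rho)+\sqrt{2a}$ by bounding $\partial\gamma/\partial a \leq (1-\rho(1-\gamma))^{-1}$ and integrating in $a$, and then uses the concavity estimate $\gamma_0(\rho)\leq 2(\rho-1)\leq 2d_q=\sqrt{2a}$ (\Lemma{lem:gammaSlope}) so that $\sqrt{8a}$ appears as $\sqrt{2a}+\sqrt{2a}$. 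This split is what lets the paper avoid exactly the bookkeeping you flag as the main obstacle. Your direct route does close, but be warned that the decomposition you sketch — bounding $(1-\rho)B\geq -2a$ and $\tfrac12(\rho B+a)^2$ separately by their worst cases over $\rho$ — takes the two worst cases at opposite ends of the interval $|\rho-1|\leq\sqrt{a/2}$ and is lossy enough to fail near the boundary $a=1/8$ (where $B=1$). The fix is to keep $u=\rho B+a$ as a single variable: since $-u+\tfrac12u^2-\tfrac16u^3$ is decreasing in $u$, one may substitute $u\leq\sqrt{8a}+3a$ throughout and verify positivity on all of $(0,1/8]$; even more cleanly, $f(B)\geq 0$ is equivalent to $\rho B+a\leq -\ln(1-B)$, and $-\ln(1-B)\geq B+\tfrac12 B^2 = \sqrt{8a}+4a$ while $\rho B + a\leq \sqrt{8a}+3a$, which finishes the critical case in two lines. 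In short: your approach trades the paper's reusable differential-inequality lemma for a self-contained but more delicate one-shot verification, and with the tightened bookkeeping above it is a valid alternative proof.
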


\subsection{Lower bounds}
The following proposition shows that the upper bounds in \Corollary{cor:simpleBounds} are tight, in the sense that, for any Hazard radius, there is a random graph on which the influence has the exact same behavior as the upper bounds in the worst case scenario.
\begin{proposition}\label{prop:tightness}
For all $\rho > 0$, there exists a constant $C_\rho > 0$ and a sequence of LPC random graphs $(\RandomG_n)_{n > 0}$ with $n$ vertices and Hazard radius $\HazSpec$, and such that $\lim_{n\rightarrow +\infty} \HazSpec = \rho$. For $n$ sufficiently large, the influence $\sigma_n(\{1\})$ of node $1$ in $\RandomG_n$ is lower bounded by:
\begin{equation}
\sigma_n(\{1\}) \geq \left\{
\begin{array}{ll}
C_\rho \sqrt{n} &\mbox{ if } \rho < 1\\
C_\rho n^{2/3} &\mbox{ if } \rho = 1\\
\gamma_0(\rho)n - o(n) &\mbox{ if } \rho > 1
\end{array}\right..
\end{equation}
\end{proposition}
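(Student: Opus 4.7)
The plan is to exhibit, for each of the three regimes, a sequence of LPC random graphs on which $\sigma_n(\{1\})$ attains the claimed lower bound up to constants. All three constructions are undirected with mutually independent edge variables, so LPC is immediate from Lemma~\ref{lem:undirectedLPC}; in the two non-trivial regimes the common recipe is to start from an \Erdos backbone of the appropriate density and to superimpose a ``star'' perturbation at node $1$ that is strong enough to boost its reach but weak enough to leave $\HazSpec$ asymptotically unchanged. For the \emph{subcritical} regime ($\rho < 1$), I take $\RandomG_n$ to be the pure star at node $1$ with independent edge probabilities $p_n = \rho/\sqrt{n-1}$ and no other edges. A direct computation gives $\HazSpec = -\sqrt{n-1}\ln(1-p_n) \to \rho$, and $\sigma_n(\{1\}) = 1 + p_n(n-1) \sim \rho\sqrt{n}$, so any constant $C_\rho < \rho$ works for $n$ large enough.

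For the \emph{supercritical} regime ($\rho > 1$), I take $\RandomG_n$ with independent edges satisfying $\Exp{A_{ij}} = \rho/n$ for $2 \leq i<j$ and $\Exp{A_{1j}} = p_n = n^{-3/4}$ for $j \geq 2$. Writing $\HazMat = H_0 + \Delta$, where $H_0$ is the Hazard matrix of pure $\RandomG(n,\rho/n)$ and $\Delta$ is the correction supported on row and column $1$, I use the rank-two structure of $\Delta$ together with Weyl's inequality to get $|\HazSpec - \rho(H_0)| \leq \|\Delta\|_{op} = O(\sqrt{n}\,p_n) = o(1)$, and hence $\HazSpec \to \rho$. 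To lower-bound the influence I restrict attention to $V = \{2,\ldots,n\}$ and invoke the classical fact that the induced \Erdos graph $\RandomG(n-1,\rho/n)$ has, with probability $1-o(1)$, a giant component $C^*$ of size $\gamma_0(\rho)n + o(n)$. Conditional on this subgraph, the edges from $1$ to $V$ are independent Bernoulli$(p_n)$, so the probability that $1$ misses $C^*$ is at most $\exp(-p_n|C^*|) = \exp(-\Theta(n^{1/4}))$, from which $\sigma_n(\{1\}) \geq \gamma_0(\rho)n - o(n)$.

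For the \emph{critical} regime ($\rho = 1$), I use the same construction but with $\rho/n$ replaced by $1/n$ and with $p_n = n^{-2/3}$. The spectral estimate is analogous and yields $\HazSpec = 1 + O(n^{-1/6}) \to 1$. The core step is the influence bound: since the edges from $1$ are independent of the critical \Erdos subgraph $G' = \RandomG(n-1,1/n)$,
\begin{equation*}
\sigma_n(\{1\}) - 1 \;=\; \sum_{v \in V}\ExpBigP{1 - (1-p_n)^{|C(v)|}} \;\geq\; \frac{p_n}{2}\,\ExpBigP{\sum_{C}|C|^2\,\one\{|C|\leq 1/p_n\}},
\end{equation*}
where I used $1-(1-p_n)^k \geq p_n k/2$ whenever $p_n k \leq 1$ and rewrote $\sum_{v}|C(v)|\one\{\cdot\}$ as $\sum_C|C|^2\one\{\cdot\}$. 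Classical estimates on critical \Erdos clusters give $\Exp{\sum_C |C|^2 \one\{|C|\leq M\}} = \Theta(n\sqrt{M})$ for $M \leq n^{2/3}$; taking $M = 1/p_n = n^{2/3}$ yields a lower bound of order $p_n\cdot n^{4/3} = n^{2/3}$.

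The main technical obstacle is the critical case, since it rests on a sharp lower bound for the truncated susceptibility $\sum_C|C|^2\one\{|C|\leq M\}$ in $\RandomG(n,1/n)$, which must be quoted from (or re-derived via) the classical cluster-size scaling for critical \Erdos graphs. The other two regimes are essentially direct: the subcritical case is an explicit calculation, and the supercritical case reduces to standard concentration of the giant component together with the perturbative Hazard-radius estimate via Weyl's inequality applied to the rank-two star correction.
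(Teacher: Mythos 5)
Your construction is the paper's own: a star at node $1$ superimposed on an \Erdos backbone (the paper's ``random star-network'' $\RandomG_{a,b}$), and your subcritical and supercritical arguments match the paper's almost verbatim --- for $\rho>1$ the paper takes $a=1/\sqrt{n\ln n}$, $b=\rho/n$ and reads $\HazSpec$ off the explicit eigenvalue formula $\HazSpec = \frac{(n-2)b'+\sqrt{(n-2)^2b'^2+4(n-1)a'^2}}{2}$ rather than via Weyl's inequality on the rank-two correction, but that is cosmetic, and both proofs then reduce to node $1$ hitting the giant component of $\RandomG(n-1,\rho/n)$ with probability $1-o(1)$. The genuine divergence is in the critical regime. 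The paper keeps the star dense enough ($a=1/\sqrt{n\ln n}$, so $a\cdot n^{2/3}\to\infty$) that node $1$ reaches the \emph{largest} component a.a.s., and therefore only needs the classical a.a.s.\ lower bound $C_1(\RandomG(n,\frac{1}{n}))\geq n^{2/3}/\omega(n)$; the price is a somewhat fiddly diagonalization over $\omega$ to convert a family of a.a.s.\ statements into the expectation bound $\Exp{C_1}\geq C n^{2/3}$. You instead take a sparse star ($p_n=n^{-2/3}$) and sum the contributions of \emph{all} components of size at most $1/p_n$, which requires the quantitative truncated-susceptibility estimate $\Exp{\sum_C|C|^2\one\{|C|\leq n^{2/3}\}}=\Omega(n^{4/3})$, equivalently the cluster-tail bound $\Prob{|C(v)|\geq k}\geq c k^{-1/2}$ for $k\leq \epsilon n^{2/3}$. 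Both external inputs are classical facts about the critical \Erdos graph, but yours is the sharper one and is the single point where your proof is not self-contained, so it must be cited precisely; in exchange you bypass the paper's $\omega$-diagonalization and obtain the expectation bound directly. Everything else checks out, including the reduction $\sigma_n(\{1\})-1=\sum_{v}\Exp{1-(1-p_n)^{|C(v)|}}$, which is exactly the paper's Eq.~\ref{eq:linkperco} specialized to this construction.
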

This proposition relies on ``\emph{random star-networks}'', \ie undirected random graphs such that the  $\{A_{ij} : i<j\}$ are independent Bernoulli random variables of parameter $a\in[0,1]$ if $i = 1$ and $b<a$ otherwise. Intuitively, such a network is the addition of a star network and an \Erdos random graph. Our theoretical bounds on the influence are particularly tight on this class of random graphs.

\section{Application to bond percolation}\label{sec:percolation}

Bounding the influence of reachable sets in random graphs allows to derive nonasymptotic bounds on a celebrated quantity in bond percolation which is the size of the giant component, as well as the distribution of the size of connected components of undirected random graphs. We first recall the inhomogeneous bond percolation model.

\begin{model}[\textbf{Bond percolation}]
A bond percolation graph is an \emph{undirected} random graph $\RandomG = \RandomG(n,A)$ of size $n$ with \emph{independent} edge  variables $\{A_{ij} : i < j\}$.
\end{model}

We recall that, according to \Lemma{lem:undirectedLPC}, $\RandomG$ is a random graph with LPC. For $k\geq 1$, we denote by $\mathcal{V}_k\subset \OneToN$ the $k^{th}$-largest connected component of $\RandomG$. We also introduce $C_k(G)= \card{\mathcal{V}_k}$, the size of the connected component with $k^{th}$ greatest cardinality and $N(m)$ the number of connected components of $\RandomG$ of cardinality greater than or equal to $m$.

\subsection{Size and existence of the giant component}
Let $a>0$. The key observation is that if each node of $\RandomG$ is an influencer with independent probability $1 - \exp(-a)$, we can relate the total size of infection and the size of the connected components of $\RandomG$ in the following way:
\begin{equation*}
\Exp{\card{R(\mathcal{I},A)}~|~A} = \sum_k {C_k(\RandomG)\Prob{\mathcal{V}_k \cap \mathcal{I} \neq \emptyset~|~A}} = \sum_k {C_k(\RandomG)\left(1-e^{-a C_k(\RandomG)}\right)}
\end{equation*}
Therefore, we have
\begin{equation}\label{eq:linkperco}
\ExpBigP{~\sum_k {C_k(\RandomG)\left(1-e^{-a C_k(\RandomG)}\right)}~} = \Exp{\sigma(\mathcal{I})} \leq \gamma(\HazSpec, a) n
\end{equation}
\medskip

The next argument leads to nonasymptotic bounds on the size of the giant component for the inhomogeneous bond percolation model:

\begin{theorem}[\textbf{Size of the giant component}]
\label{th:sizecomponent}
Let $\RandomG(n,A)$ be an undirected random graph of size $n$ with independent edge  variables $\{A_{ij} : i < j\}$. Let $a>0$, and $\HazSpec$ the Hazard radius of $\RandomG$. The probability distribution of the size of its largest connected component $C_1(\RandomG)$ verifies:
\begin{equation}
\label{eqn:sizecomp}
\Exp{C_1(\RandomG)(1-e^{-a (C_1(\RandomG)-1)})} \leq n \left(1-e^{-\HazSpec\gamma (\HazSpec,a)} \right)
\end{equation}
where $\gamma(\rho,a)$ is defined as in \Definition{def:gamma}.
\end{theorem}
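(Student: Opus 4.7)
The plan is to leverage the identity already established in equation \eqref{eq:linkperco}, which gives
\[
\ExpBigP{\sum_k C_k(\RandomG)(1-e^{-aC_k(\RandomG)})} \leq \gamma(\HazSpec,a) n,
\]
and to repackage it into the form required by the statement. First I would rewrite the inequality as a lower bound on the Laplace-type sum: since $\sum_k C_k(\RandomG) = n$, it becomes
\[
\Exp{\sum_k C_k(\RandomG) e^{-aC_k(\RandomG)}} \geq (1-\gamma(\HazSpec,a))n.
\]

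Next I would observe that because every component has $C_k(\RandomG) \geq 1$, each term $C_k(\RandomG)(1 - e^{-a(C_k(\RandomG)-1)})$ is nonnegative, and in particular $C_1(\RandomG)$ is one such term. Therefore
\[
C_1(\RandomG)\bigl(1 - e^{-a(C_1(\RandomG)-1)}\bigr) \leq \sum_k C_k(\RandomG)\bigl(1-e^{-a(C_k(\RandomG)-1)}\bigr).
\]
The right-hand side admits a clean algebraic rearrangement by pulling out a factor $e^a$ from the exponential:
\[
\sum_k C_k(\RandomG)(1-e^{-a(C_k(\RandomG)-1)}) = n - e^a \sum_k C_k(\RandomG) e^{-aC_k(\RandomG)}.
\]

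Taking expectations and plugging in the lower bound from the first step, this yields
\[
\Exp{C_1(\RandomG)(1-e^{-a(C_1(\RandomG)-1)})} \leq n - e^a(1-\gamma(\HazSpec,a))n.
\]
Finally, I would close with the defining fixed-point equation of $\gamma$: since $\gamma(\HazSpec,a) = 1 - e^{-\HazSpec\gamma(\HazSpec,a)-a}$, we have $e^a(1-\gamma(\HazSpec,a)) = e^{-\HazSpec\gamma(\HazSpec,a)}$, and the bound becomes exactly $n(1 - e^{-\HazSpec\gamma(\HazSpec,a)})$.

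The only subtle point — and the step I would treat most carefully — is the algebraic identity turning $\sum_k C_k(1-e^{-a(C_k-1)})$ into $n - e^a\sum_k C_k e^{-aC_k}$, because this is the pivot that lets the previously established Laplace-type bound on the sum of all components feed into a pointwise bound on the largest one. The rest is straightforward bookkeeping: the inequality $C_1(1-e^{-a(C_1-1)}) \leq \sum_k C_k(1-e^{-a(C_k-1)})$ is immediate from nonnegativity of each summand, and the final simplification is a one-line consequence of \Definition{def:gamma}. No convexity, concentration, or additional probabilistic machinery is needed beyond \eqref{eq:linkperco}.
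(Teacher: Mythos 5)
Your proof is correct and follows essentially the same route as the paper: both start from \Eq{eq:linkperco}, discard the contribution of the non-largest components via $C_k \geq 1$ (your nonnegativity of $C_k(1-e^{-a(C_k-1)})$ is algebraically identical to the paper's bound $C_k(1-e^{-aC_k}) \geq C_k(1-e^{-a})$), and close with the fixed-point equation defining $\gamma$. No gaps.
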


\begin{corollary}\label{cor:simpleSize}
Under the same conditions as in \Theorem{th:sizecomponent}, we have:
\[
\displaystyle
\Exp{C_1(\RandomG)} \leq
\left\{
\begin{array}{ll}
 \displaystyle\frac{1}{2} + \sqrt{\frac{1}{4} +\frac{n \HazSpec}{1-\HazSpec}}~,  & \text{if } \HazSpec < 1 - \kappa n^{-1/3} \\
 &\\
  \displaystyle \gamma_0(\HazSpec) n + \frac{n^{2/3}}{\sqrt{\kappa}}~, &\text{if }  |\HazSpec - 1| \leq \kappa n^{-1/3}\\
  &\\
  \displaystyle \gamma_0(\HazSpec) n + c_n\sqrt{n} + 2~, & \text{if } \HazSpec > 1 + \kappa n^{-1/3}
\end{array}
\right.
\]
where $c_n = \displaystyle  \frac{2}{\sqrt{e}} \sqrt{\frac{(1-\gamma_0(\HazSpec))^2 \HazSpec}{1-\HazSpec+\gamma_0(\HazSpec)\HazSpec}}$ and $\kappa = \displaystyle \left(\frac{2e}{27}\right)^{2/3}$.
\end{corollary}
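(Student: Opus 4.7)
The plan is to derive all three cases of \Corollary{cor:simpleSize} from the master inequality \eqref{eqn:sizecomp} of \Theorem{th:sizecomponent} by three different choices of the free parameter $a > 0$, combined with bounds on the Hazard function $\gamma$ from Appendix \ref{appendix:gamma}.

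For the subcritical regime, I would divide both sides of \eqref{eqn:sizecomp} by $a$ and let $a \to 0^+$. On the left, dominated convergence (the integrand is bounded by $C_1(C_1-1) \leq n^2$) gives $\Exp{C_1(C_1-1)}$. On the right, implicit differentiation of $\gamma - 1 + e^{-\HazSpec\gamma - a} = 0$ at $a = 0$ yields $\gamma(\HazSpec, 0) = 0$ and $\partial_a \gamma(\HazSpec, 0) = 1/(1-\HazSpec)$ (this derivative is finite precisely because $\HazSpec < 1$), so that $n(1-e^{-\HazSpec\gamma(\HazSpec,a)})/a \to n\HazSpec/(1-\HazSpec)$. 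Convexity of $x \mapsto x(x-1)$ and Jensen's inequality then give $\Exp{C_1}(\Exp{C_1} - 1) \leq n\HazSpec/(1-\HazSpec)$, and solving this quadratic in $\Exp{C_1}$ yields the stated bound.

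For the critical and supercritical regimes I would keep $a > 0$ and rearrange \eqref{eqn:sizecomp} as
\begin{equation*}
\Exp{C_1} \leq n\bigl(1-e^{-\HazSpec\gamma(\HazSpec,a)}\bigr) + \Exp{C_1 e^{-a(C_1-1)}},
\end{equation*}
then apply the pointwise bound $x e^{-a(x-1)} \leq \max_{x\geq 0} x e^{-a(x-1)} = e^{a-1}/a$, attained at $x = 1/a$, to get $\Exp{C_1} \leq n(1-e^{-\HazSpec\gamma(\HazSpec,a)}) + e^{a-1}/a$. The two regimes differ only in the bound used for $\gamma(\HazSpec, a) - \gamma_0(\HazSpec)$. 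In the supercritical case, concavity of $a \mapsto \gamma(\HazSpec, a)$ (visible from differentiating the defining equation) yields the tangent bound $\gamma(\HazSpec, a) \leq \gamma_0(\HazSpec) + a(1-\gamma_0)/(1-\HazSpec(1-\gamma_0))$; combining with $1-e^{-y}\leq y$ gives $n(1-e^{-\HazSpec\gamma(\HazSpec,a)}) \leq n\gamma_0(\HazSpec) + \alpha n a$ for $\alpha = \HazSpec(1-\gamma_0)^2/(1-\HazSpec(1-\gamma_0))$, and optimizing at $a^\ast \sim 1/\sqrt{en\alpha}$ produces the stated $c_n\sqrt{n}$ term. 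In the critical case, I would instead use the uniform bound $\gamma(\HazSpec, a) - \gamma_0(\HazSpec) \leq (1-\gamma_0)\sqrt{2a}$, derivable from the identity $a = -\ln(1-\gamma) - \HazSpec\gamma$ together with $-\ln(1-u) \geq u + u^2/2$ and $1/(1-\gamma_0) \geq \HazSpec$; substituting and optimizing at $a^\ast \sim n^{-2/3}$ yields exactly $3n^{2/3}/(2e)^{1/3} = n^{2/3}/\sqrt{\kappa}$ with $\kappa = (2e/27)^{2/3}$.

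The main technical obstacle is not any individual step but rather tracking the constants cleanly through the optimization of $n(1-e^{-\HazSpec\gamma(\HazSpec,a)}) + e^{a-1}/a$ so as to reproduce $c_n$ and $\kappa$ exactly, and verifying that the regime thresholds $\kappa n^{-1/3}$ are calibrated so that the three bounds agree at their common boundaries (e.g.\ the subcritical bound $\sqrt{n\HazSpec/(1-\HazSpec)}$ at $\HazSpec = 1 - \kappa n^{-1/3}$ should coincide with $n^{2/3}/\sqrt{\kappa}$). This is essentially a careful asymptotic analysis of $\gamma(\HazSpec, a)$ near $a = 0$ in each regime, whose required estimates are presumably collected in Appendix \ref{appendix:gamma}.
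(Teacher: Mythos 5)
Your proposal is correct and follows essentially the same route as the paper: the subcritical bound is obtained by differentiating (or taking the $a\to 0^+$ limit of) the master inequality to get $\Exp{C_1(C_1-1)} \leq n\HazSpec/(1-\HazSpec)$ and solving the quadratic via Jensen, while the critical and supercritical bounds come from $C_1 e^{-aC_1}\leq 1/(ae)$, the bounds $\gamma(\HazSpec,a)\leq\gamma_0+\sqrt{2a}$ and $\gamma(\HazSpec,a)\leq\gamma_0 + a(1-\gamma_0)/(1-\HazSpec(1-\gamma_0))$ (the paper's Lemma~\ref{lem:gammaBounds}, which you rederive directly from the defining equation of $\gamma$), and optimization over $a$ with exactly the choices $a^\ast\sim n^{-2/3}$ and $a^\ast\sim 1/\sqrt{e n\alpha}$ used in the paper. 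The only cosmetic differences are your slightly sharper critical estimate $\gamma-\gamma_0\leq(1-\gamma_0)\sqrt{2a}$ and the phrasing of the decomposition $1-e^{-\HazSpec\gamma}\leq\gamma_0+\HazSpec(1-\gamma_0)(\gamma-\gamma_0)$, neither of which changes the argument or the constants.
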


\begin{remark}[\textbf{Homogeneous percolation}]
Let $\mathcal{G} = (\mathcal{V}, \mathcal{E})$ be an undirected graph and $p\in[0,1]$. If $\Exp{A_{ij}} = p \one \{(i,j)\in\mathcal{E}\}$, then $\HazSpec = -\ln(1-p)\rho(\AdjMat)$ where $\AdjMat$ is the adjacency matrix of $\mathcal{G}$.
\end{remark}

Whereas the latter results hold for any $n \in \mathbb{N}$, classical results in percolation theory study the asymptotic behavior of sequences of graphs when $n \rightarrow \infty$. Up to our knowledge,  the best result in the inhomogeneous bond percolation model (see \cite{bollobas2007phase}, Corollary 3.2 of section 5) states that: under a given subcriticality condition, $C_1(\RandomG_{n})=o(n)$ \emph{asymptotically almost surely} (a.a.s.). Combining our previous theorem and Markov's inequality, we are in position of obtaining a significant improvement on the previous result.

\begin{corollary}\label{cor:limsup}
Denote by $G_n = \RandomG(n,A_n)$ a sequence of undirected random graphs and $\HazSpec$ the sequence of spectral radiuses of the corresponding Hazard matrices. Let $\omega_n$ be a sequence such that $\lim_{n \rightarrow \infty} \omega_n = +\infty$. We have:
\[
\displaystyle
C_1(\RandomG_{n}) =
\left\{
\begin{array}{ll}
 \displaystyle o(n^{1/2} \omega_n) \text{ a.a.s.}~,  & \text{if } \limsup_{n \rightarrow \infty} \HazSpec < 1 \\
 &\\
  \displaystyle o(n^{\max(1 - \beta,2/3)}\omega_n) \text{ a.a.s.}~, &\text{if } \HazSpec - 1 = O(n^{-\beta})
\end{array}
\right.
\]
\end{corollary}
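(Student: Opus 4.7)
\medskip
\noindent\textbf{Proof sketch (proposal).} The plan is to combine the nonasymptotic bounds on $\Exp{C_1(\RandomG_n)}$ from \Corollary{cor:simpleSize} with Markov's inequality: once one establishes $\Exp{C_1(\RandomG_n)} \leq C n^{1/2}$ (resp.\ $\Exp{C_1(\RandomG_n)} \leq C n^{\max(1-\beta,2/3)}$) for $n$ large enough, Markov applied to $\{C_1(\RandomG_n) \geq n^{1/2}\omega_n\}$ (resp.\ $\{C_1(\RandomG_n) \geq n^{\max(1-\beta,2/3)}\omega_n\}$) gives a probability bounded by $C/\omega_n \to 0$, which is the sought a.a.s.\ conclusion. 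For the first bullet, $\limsup_{n\to\infty}\HazSpec < 1$ provides $\eta \in (0,1)$ with $\HazSpec \leq 1-\eta$ eventually, so the subcritical branch of \Corollary{cor:simpleSize} applies and yields $\Exp{C_1(\RandomG_n)} \leq \tfrac{1}{2} + \sqrt{\tfrac{1}{4} + \tfrac{n(1-\eta)}{\eta}} = O(\sqrt{n})$.

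For the second bullet, I would split into cases according to the position of $\HazSpec$ relative to the critical window $[1 - \kappa n^{-1/3},\, 1 + \kappa n^{-1/3}]$ of \Corollary{cor:simpleSize}. When $\beta \geq 1/3$, the hypothesis $\HazSpec - 1 = O(n^{-\beta})$ forces $|\HazSpec - 1| \leq \kappa n^{-1/3}$ eventually, so only the critical branch is active and gives $\Exp{C_1(\RandomG_n)} \leq \gamma_0(\HazSpec) n + n^{2/3}/\sqrt{\kappa}$; since $\gamma_0(\rho) = O(\rho - 1)$ as $\rho \to 1^+$ and $\gamma_0(\rho) = 0$ for $\rho \leq 1$, the first term is $O(n^{1-\beta}) \subseteq O(n^{2/3})$, matching $\max(1-\beta,2/3) = 2/3$. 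When $\beta < 1/3$, all three branches must be handled: the subcritical one yields $\sqrt{n\HazSpec/(1-\HazSpec)} = O(n^{2/3})$ from the eventual lower bound $1-\HazSpec > \kappa n^{-1/3}$; the critical one is $O(n^{2/3})$ directly; in the supercritical branch, the main term is $\gamma_0(\HazSpec) n = O(n^{1-\beta})$ and the correction $c_n \sqrt{n}$ is $O(n^{2/3})$ after estimating $c_n = O(1/\sqrt{\HazSpec - 1}) = O(n^{1/6})$ from a Taylor expansion of $\gamma_0$ at $\rho = 1$ combined with $\HazSpec - 1 > \kappa n^{-1/3}$.

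The main obstacle I anticipate is the careful accounting in the supercritical branch when $\beta < 1/3$, specifically verifying that $c_n \sqrt{n}$ does not exceed $n^{\max(1-\beta,2/3)}$. This reduces to quantitative control of $\gamma_0$ near $\rho = 1$, which should follow from elementary implicit-function analysis of the defining equation $\gamma_0 - 1 + e^{-\rho \gamma_0} = 0$ as in Appendix \ref{appendix:gamma}.
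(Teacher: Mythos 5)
Your proposal is correct and takes essentially the same route the paper intends: the paper offers no written proof of \Corollary{cor:limsup} beyond the remark that it follows by ``combining our previous theorem and Markov's inequality,'' and your sketch (the expectation bounds of \Corollary{cor:simpleSize} split over the three branches, then Markov applied with the $\epsilon$-quantified event to obtain the little-$o$ a.a.s.\ statement) supplies exactly those details. The one step you flag as needing care, namely $c_n = O\bigl((\HazSpec-1)^{-1/2}\bigr)$, does go through, since $e^{-x}\leq 1-x+x^2/2$ gives $\gamma_0(\rho)\geq 2(\rho-1)/\rho^2$ and hence $1-\HazSpec(1-\gamma_0(\HazSpec))\geq (\HazSpec-1)(2-\HazSpec)/\HazSpec$, which is of order $\HazSpec-1$ in the regime $\HazSpec\rightarrow 1^{+}$.
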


Moreover, \Corollary{cor:simpleSize} also implies a result of non-existence of the giant component, in the sense that no component has a size proportional to the size of the network.
\begin{corollary}[\textbf{Existence of a giant component}]
Denote by $G_n = \RandomG(n,A_n)$ a sequence of undirected random graphs and $\HazSpec$ the sequence of spectral radiuses of the corresponding Hazard matrices. If $\limsup_{n\rightarrow \infty} \HazSpec \leq 1$, then
\begin{equation}
\Exp{C_1(\RandomG_n)} = o(n),
\end{equation}
and there is no giant component in $\RandomG_n$ a.a.s..
\end{corollary}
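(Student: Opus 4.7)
The plan is to invoke \Corollary{cor:simpleSize} regime by regime and check that each upper bound is $o(n)$ under $\limsup_n \HazSpec \leq 1$; the \emph{no giant component} a.a.s.\ conclusion then follows immediately from Markov's inequality applied to the first assertion. I would start by fixing an arbitrary $\epsilon > 0$. The $\limsup$ hypothesis yields $N_\epsilon$ such that $\HazSpec \leq 1+\epsilon$ for all $n \geq N_\epsilon$. The facts that $\gamma_0$ vanishes on $[0,1]$, is nondecreasing, and is continuous at $1$ are all immediate from \Definition{def:gamma} (for $\rho \leq 1$ the only root in $[0,1]$ of $\gamma = 1 - e^{-\rho\gamma}$ is $\gamma = 0$; the positive branch for $\rho > 1$ continuously collapses to $0$ as $\rho \to 1^+$). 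Consequently $\gamma_0(\HazSpec) \leq \gamma_0(1+\epsilon)$ for $n \geq N_\epsilon$ and $\gamma_0(1+\epsilon) \to 0$ as $\epsilon \to 0^+$.

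Next I would split by the three regimes of \Corollary{cor:simpleSize} (with $\kappa = (2e/27)^{2/3}$). In the subcritical regime $\HazSpec < 1 - \kappa n^{-1/3}$, combining $\HazSpec \leq 1$ with $1-\HazSpec \geq \kappa n^{-1/3}$ gives $n\HazSpec/(1-\HazSpec) \leq n^{4/3}/\kappa$, so the bound is $O(n^{2/3})$. In the critical regime $|\HazSpec - 1| \leq \kappa n^{-1/3}$ the bound $\gamma_0(\HazSpec)n + n^{2/3}/\sqrt{\kappa}$ is at most $\gamma_0(1+\kappa n^{-1/3})n + O(n^{2/3})$, which is $o(n)$ by continuity of $\gamma_0$ at $1$. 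In the supercritical regime $\HazSpec > 1+\kappa n^{-1/3}$ and for $n \geq N_\epsilon$, the bound reads $\gamma_0(\HazSpec)n + c_n\sqrt{n} + 2 \leq \gamma_0(1+\epsilon)n + c_n\sqrt{n} + 2$; a short expansion of $c_n$ near $\HazSpec = 1^+$ (using $\gamma_0(1+\delta) \sim 2\delta$) shows $c_n = O((\HazSpec-1)^{-1/2}) = O(n^{1/6})$, so $c_n\sqrt{n} = O(n^{2/3}) = o(n)$.

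Combining the three cases gives, for every fixed $\epsilon > 0$,
\[ \limsup_{n\to\infty} \frac{\Exp{C_1(\RandomG_n)}}{n} \leq \gamma_0(1+\epsilon). \]
Sending $\epsilon \to 0^+$ and using continuity of $\gamma_0$ at $1$ then yields $\Exp{C_1(\RandomG_n)} = o(n)$. The a.a.s.\ statement is immediate: for any $\delta > 0$, Markov's inequality gives $\Prob{C_1(\RandomG_n) \geq \delta n} \leq \Exp{C_1(\RandomG_n)}/(\delta n) \to 0$.

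The main subtlety I anticipate is the interplay between regimes, since the critical window $\kappa n^{-1/3}$ shrinks with $n$ and the sequence $\HazSpec$ may in principle oscillate across all three regimes infinitely often. This is what forces the iterated limit---fix $\epsilon$ first, then let $n\to\infty$, and only afterwards send $\epsilon \to 0^+$---rather than trying to bound $\Exp{C_1(\RandomG_n)}/n$ at each $n$ by a single regime-specific expression. A secondary technical point is the control of $c_n$ in the supercritical regime, where $c_n$ does blow up as $\HazSpec \to 1^+$, but only at rate $O(n^{1/6})$, which is slow enough to keep the error $c_n\sqrt{n}$ inside $o(n)$.
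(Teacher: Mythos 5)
Your proof is correct, but it takes a more roundabout route than the paper's. The paper's argument is a two-line affair: it observes that the \emph{critical} bound of \Corollary{cor:simpleSize}, namely $\Exp{C_1(\RandomG_n)} \leq \gamma_0(\HazSpec)\,n + n^{2/3}/\sqrt{\kappa}$, is in fact valid for \emph{all} $\HazSpec \geq 0$ (this is visible in the proof of that corollary, where Eq.~\ref{eq:criticalapprox} together with the unrestricted bound $\gamma(\rho,a) \leq \gamma_0(\rho) + \sqrt{2a}$ of \Lemma{lem:gammaBounds} is used with no assumption on the regime), and then applies \Lemma{lem:gammaSlope} to get $\gamma_0(\HazSpec) \leq 2(\HazSpec - 1)^+ \to 0$ under the $\limsup$ hypothesis, whence $o(n)$ and Markov. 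You instead treat the three displayed regimes of \Corollary{cor:simpleSize} separately, which forces you to control the blow-up of $c_n$ near $\HazSpec = 1^+$ in the supercritical branch and to handle the possibility that the sequence oscillates between regimes via the iterated $\epsilon$-then-$n$ limit; all of these steps check out (in particular $c_n = O((\HazSpec-1)^{-1/2}) = O(n^{1/6})$ is right, since $1 - \HazSpec(1-\gamma_0(\HazSpec)) \sim \HazSpec - 1$ as $\HazSpec \to 1^+$ by $\gamma_0'(1^+)=2$), but they are all rendered unnecessary by the single observation that the middle bound holds unconditionally. What your version buys is that it works from the \emph{statement} of \Corollary{cor:simpleSize} alone, without reopening its proof to justify the parenthetical ``valid for all $\HazSpec \geq 0$''; what it costs is the case analysis and the $c_n$ asymptotics, which are the only genuinely delicate points in your write-up and which you only sketch (``a short expansion of $c_n$''). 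Either way the conclusion and the final Markov step are the same.
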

\begin{proof}
Combining the second equation of \Corollary{cor:simpleSize} (valid for all $\HazSpec \geq 0$) and \Lemma{lem:gammaSlope}, we obtain $\Exp{C_1(\RandomG_n)} = o(n)$. Markov's inequality implies the result in probability.
\end{proof}

\subsection{Number of components of cardinality larger than m}
The following result focuses on discovering the expectation of $N(m)$, the number of connected components of $\RandomG$ having cardinality greater than or equal to $m$. A straightforward observation yields $\Exp{N(m)} \leq \frac{n}{m}$. For critical and subcritical random graphs, we are able to show that the expectation of $N(m)$ is in fact decreasing much faster with respect to $m$.

\begin{theorem}
\label{th:numcomponents}
Let $\RandomG(n,A)$ be an undirected random graph of size $n$ with independent edge  variables $\{A_{ij} : i < j\}$. Let $m>0$. The expected number of connected components $N(m)$ of cardinality greater than or equal to $m$ is upper bounded by:
\begin{equation}
\Exp{N(m)} \leq \frac{n}{m} \min_{a>0} \left\{\frac{1-e^{-\HazSpec\gamma (\HazSpec,a)}}{1-e^{a(m-1)}}\right\}
\end{equation}
where $\gamma(\rho,a)$ is defined as in \Definition{def:gamma}.
\end{theorem}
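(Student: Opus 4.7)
The plan is to obtain the bound by a Markov-type truncation, starting from a strengthened form of \Theorem{th:sizecomponent} that controls the weighted sum $\sum_k C_k(\RandomG)(1 - e^{-a(C_k(\RandomG)-1)})$ over \emph{all} connected components rather than only the largest. Once this aggregated inequality is available, the count $N(m)$ can be extracted via the monotonicity of the weight $c \mapsto 1 - e^{-a(c-1)}$, and the one-dimensional optimization over $a > 0$ delivers the stated minimum.

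Concretely, I would first revisit the proof of \Theorem{th:sizecomponent} in order to extract the per-node inequality
$$\Exp{1 - e^{-a(C(i)-1)}} \leq 1 - e^{-\HazSpec\gamma(\HazSpec, a)},$$
valid uniformly in $i \in \OneToN$, where $C(i)$ denotes the cardinality of the connected component of $\RandomG$ containing $i$. The key observation is that, because $\RandomG$ is undirected, the conditional probability (given $A$) that an independent random set of influencers $\mathcal{I}' \subseteq \OneToN \setminus \{i\}$ with inclusion probability $q = 1 - e^{-a}$ reaches $i$ is exactly $1 - e^{-a(C(i)-1)}$, and this reachability probability is controlled by applying \Theorem{th:randomResult} in the same manner as in \Theorem{th:sizecomponent}. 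Summing the per-node bound over $i \in \OneToN$ and using the identity $\sum_i (1 - e^{-a(C(i)-1)}) = \sum_k C_k(\RandomG)(1 - e^{-a(C_k(\RandomG)-1)})$ then produces
$$\Exp{\sum_k C_k(\RandomG)(1 - e^{-a(C_k(\RandomG)-1)})} \leq n(1 - e^{-\HazSpec\gamma(\HazSpec, a)}).$$

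I would then restrict this aggregated sum to components of cardinality at least $m$. Since $c \mapsto 1 - e^{-a(c-1)}$ is nondecreasing, every component with $C_k(\RandomG) \geq m$ contributes at least $m(1 - e^{-a(m-1)})$, so
$$m(1 - e^{-a(m-1)}) \cdot N(m) \leq \sum_k C_k(\RandomG)(1 - e^{-a(C_k(\RandomG)-1)}).$$
Taking expectation, dividing by $m(1 - e^{-a(m-1)}) > 0$, and minimizing the resulting upper bound over $a > 0$ then yields the theorem. The principal obstacle is the per-node extension of \Theorem{th:sizecomponent}: that theorem as stated only controls $C_1(\RandomG)$, so I need to verify that its proof technique in fact delivers the uniform-in-$i$ inequality above, which is the only nontrivial step. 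Everything subsequent is a routine Markov-style truncation, and the one-dimensional minimization over $a$ recovers, via the standard properties of $\gamma(\HazSpec, a)$ collected in Appendix~\ref{appendix:gamma}, the three regime-dependent rates $O(nm^{-2})$, $O(nm^{-3/2})$, and $\gamma_0(\HazSpec)n/m + O(nm^{-3/2})$ summarized in \Tab{tab:summaryPerco}.
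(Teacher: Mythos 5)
Your overall skeleton (an aggregated inequality for $\sum_k C_k(\RandomG)(1-e^{-a(C_k(\RandomG)-1)})$, followed by truncation at $m$ and optimization over $a$) is the right one, and your truncation step is correct as written. The gap is in your step 1: the uniform-in-$i$ inequality $\Exp{1-e^{-a(C(i)-1)}}\leq 1-e^{-\HazSpec\gamma(\HazSpec,a)}$ is false. \Theorem{th:randomResult} and the machinery behind it (\Lemma{lem:mainLemmaRandom} plus the Cauchy--Schwarz step) only control the \emph{sum} $\sum_i \Prob{i\in R(\mathcal{I},A)}$, \ie a weighted average of the per-node reachability probabilities; the spectral radius gives no handle on an individual coordinate $(\HazMat^\top Z)_i$. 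A concrete counterexample is the star network of \Proposition{prop:tightness}: join node $1$ to each other node independently with probability $p=c/\sqrt{n-1}$, $c<1$, and put no other edges. Then $\HazSpec=-\sqrt{n-1}\ln(1-p)\rightarrow c$, so $1-e^{-\HazSpec\gamma(\HazSpec,a)}\leq 1-e^{-\HazSpec}\rightarrow 1-e^{-c}<1$, while $\Exp{1-e^{-a(C(1)-1)}} = 1-\left(1-p(1-e^{-a})\right)^{n-1}\rightarrow 1$. The hub violates your per-node bound; the theorem survives only because the remaining $n-1$ nodes have tiny reachability probabilities, \ie only the average is controlled.

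Fortunately the aggregated inequality you actually need is true and requires no per-node detour: it follows from \Eq{eq:linkperco} (which the paper obtains precisely by applying \Theorem{th:randomResult} to the sum over all nodes at once) via the identity $1-e^{-aC_k} = (1-e^{-a}) + e^{-a}(1-e^{-a(C_k-1)})$ together with $\sum_k C_k(\RandomG) = n$, giving $\Exp{\sum_k C_k(\RandomG)(1-e^{-a(C_k(\RandomG)-1)})}\leq ne^a(\gamma(\HazSpec,a)-1+e^{-a}) = n(1-e^{-\HazSpec\gamma(\HazSpec,a)})$, the last equality being the defining equation of $\gamma$. With this substitution, your bound $m(1-e^{-a(m-1)})N(m)\leq \sum_k C_k(\RandomG)(1-e^{-a(C_k(\RandomG)-1)})$ and the minimization over $a$ give the theorem (note the exponent in the stated denominator should read $e^{-a(m-1)}$, as in the paper's own derivation, otherwise the bound is negative). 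The repaired argument is essentially the paper's, which works with the global sum throughout and splits it into components of size at least $m$ and below $m$ rather than discarding the small ones; the two splits yield the same constant.
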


\begin{corollary}\label{cor:simplenumcomponents}
Under the same conditions as in \Theorem{th:numcomponents}, we have:
\[
\displaystyle
\Exp{N(m)}  \leq
\left\{
\begin{array}{ll}
 \displaystyle \frac{n}{m(m-1)}\frac{\HazSpec}{1-\HazSpec}~,  & \text{if } \HazSpec < 1 - \kappa_1 m^{-1/2} \\
 &\\
  \displaystyle \frac{n}{m^{3/2}}\frac{1}{\kappa_1}~, &\text{if }  |\HazSpec - 1| \leq \kappa_1 m^{-1/2}\\
  &\\
  \displaystyle \frac{n}{m} \left(\gamma_0(\HazSpec) + \frac{c_n'}{\sqrt{m-1}} + \frac{c_n}{m-1} \right)~, & \text{if } \HazSpec > 1 + \kappa_1 m^{-1/2}
\end{array}
\right.
\]
where $c_n = \frac{(1-\gamma_0(\HazSpec))^2 \HazSpec}{1-\HazSpec+\gamma_0(\HazSpec)\HazSpec}$, $c_n' = \sqrt{\gamma_0(\HazSpec)c_n}$ and $\kappa_1 = \sqrt{\frac{\eta}{8}}\left(\sqrt{1+\frac{8}{2\eta - 1}} - 1\right)$ where $\eta$ is the strictly positive solution of $e^\eta = 2\eta + 1$ ($\kappa_1 \approx 0.32$).
\end{corollary}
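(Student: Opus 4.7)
The plan is to start from the bound in \Theorem{th:numcomponents} and, in each of the three regimes, choose a specific value of the free parameter $a>0$, then control the resulting ratio using bounds on the Hazard function $\gamma(\HazSpec, a)$ collected in Appendix~\ref{appendix:gamma} (the same tools that drive \Corollary{cor:simpleBounds} and \Corollary{cor:simpleSize}).

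Subcritical regime ($\HazSpec < 1 - \kappa_1 m^{-1/2}$). In this range, the defining equation for $\gamma$ yields $\gamma(\HazSpec, a) \leq a/(1-\HazSpec)$, hence $1 - e^{-\HazSpec\gamma(\HazSpec,a)} \leq \HazSpec\, a/(1-\HazSpec)$. Letting $a \to 0^+$ in the quotient from \Theorem{th:numcomponents} and using $1 - e^{-a(m-1)} \sim a(m-1)$ gives the limit $\HazSpec / ((1-\HazSpec)(m-1))$, which is the first branch.

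Supercritical regime ($\HazSpec > 1 + \kappa_1 m^{-1/2}$). The numerator should converge to $1 - e^{-\HazSpec\gamma_0(\HazSpec)} = \gamma_0(\HazSpec)$ by the definition of $\gamma_0$, producing the leading $\gamma_0(\HazSpec)n/m$. To capture the lower-order correction terms, I would use the linearization $\gamma(\HazSpec, a) \leq \gamma_0(\HazSpec) + \frac{(1-\gamma_0(\HazSpec))\,a}{1-\HazSpec(1-\gamma_0(\HazSpec))}$ valid for small $a$, together with a first-order expansion $1 - e^{-\HazSpec\gamma} \leq \gamma_0(\HazSpec) + \HazSpec(1-\gamma_0(\HazSpec)) \cdot (\gamma - \gamma_0(\HazSpec))$. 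Balancing this numerator correction against a second-order expansion of the denominator $1 - e^{-a(m-1)}$ and choosing $a$ of order $1/(m-1)$ yields the claimed corrections $c_n'/\sqrt{m-1}$ and $c_n/(m-1)$.

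Critical regime ($|\HazSpec - 1| \leq \kappa_1 m^{-1/2}$). This is the delicate case, and it is where the constant $\kappa_1$ and the auxiliary root $\eta$ of $e^\eta = 2\eta + 1$ enter. One uses a near-critical bound of the form $\gamma(\HazSpec, a) \leq (\HazSpec - 1)_+ + \sqrt{2a + (\HazSpec - 1)^2}$, obtained by linearizing the defining equation of $\gamma$ about $\rho = 1$. After substituting $u = a(m-1)$ and bounding $1 - e^{-\HazSpec\gamma} \leq \HazSpec\gamma$, the ratio from \Theorem{th:numcomponents} reduces to a universal scalar optimization: minimize $\sqrt{u}/(1 - e^{-u})$ over $u > 0$. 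Differentiation gives the first-order condition $e^u - 1 = 2u$, which is exactly the definition of $\eta$ — hence the appearance of $\eta$ in the statement. The quadratic form of $\kappa_1$ then arises from enforcing that the critical branch $n/(\kappa_1 m^{3/2})$ dominates both the subcritical and supercritical branches at the matching boundaries $\HazSpec = 1 \pm \kappa_1 m^{-1/2}$.

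The main obstacle will be the critical regime: securing a bound on $\gamma(\HazSpec, a)$ that is tight and valid uniformly over the near-critical window $|\HazSpec - 1| = O(m^{-1/2})$, and then carrying out the matching at the two boundaries cleanly enough to obtain the sharp closed-form value of $\kappa_1$ rather than a mere universal constant. Once the right uniform bound on $\gamma$ is in hand, the remaining algebra reduces to the scalar optimization producing $\eta$ and a quadratic in $\kappa_1$ whose positive root is the formula stated.
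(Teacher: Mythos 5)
Your overall strategy is the one the paper follows: start from the bound of \Theorem{th:numcomponents}, pick a regime-dependent value of $a$, and control $\gamma(\HazSpec,a)$ via \Lemma{lem:gammaBounds} and \Lemma{lem:gammaSlope}. Your subcritical step (bound $1-e^{-\HazSpec\gamma}\leq \HazSpec\gamma \leq \HazSpec a/(1-\HazSpec)$ and let $a\to 0^+$) and your critical step (substitute $u=a(m-1)$, reduce to minimizing $\sqrt{u}/(1-e^{-u})$, whose stationarity condition $e^u-1=2u$ is exactly the equation defining $\eta$, then fix $\kappa_1$ by matching branches at the boundary) are essentially what the paper does; the only cosmetic difference is that the paper determines $\kappa_1$ by equating the leading orders of the subcritical and critical bounds at $\HazSpec = 1-\nu m^{-1/2}$ only, rather than matching on both sides.

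The supercritical step as you describe it would fail, however. After the linearization $1-e^{-\HazSpec\gamma(\HazSpec,a)} \leq \gamma_0(\HazSpec) + c_n a$, the quantity to control is $\bigl(\gamma_0(\HazSpec)+c_n a\bigr)/\bigl(1-e^{-a(m-1)}\bigr)$. If you take $a$ of order $1/(m-1)$, then $u=a(m-1)$ stays bounded, the denominator $1-e^{-u}$ is bounded away from $1$, and the leading term inflates to $\gamma_0(\HazSpec)/(1-e^{-u}) > \gamma_0(\HazSpec)$: you never recover the announced leading term $\gamma_0(\HazSpec)\,n/m$. What is needed is $a(m-1)\to\infty$ while $c_n a\to 0$; the paper takes $a=\sqrt{2\gamma_0(\HazSpec)/(c_n(m-1))}$, i.e.\ $a\asymp (m-1)^{-1/2}$, so that the denominator is exponentially close to $1$ while the numerator correction $c_n a$ is of the announced order $c_n'/\sqrt{m-1}$. (Relatedly, a ``second-order expansion of the denominator'' around $u=0$ points in the wrong direction here --- the relevant asymptotics is $u\to\infty$, not $u\to 0$.) With that single correction to the choice of $a$, your plan reproduces the paper's proof.
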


\section{Application to site percolation}\label{sec:sitePerco}
In this section, we show that the results of the previous section can further be applied to site percolation.

\begin{model}[\textbf{Site percolation}]
A site percolation graph consists in removing the nodes of an undirected graph $\mathcal{G} = (\mathcal{V}, \mathcal{E})$ independently and with probability $1-p_i$. 
\end{model}

Although the resulting undirected random graph $\RandomG_{SP}$ is not LPC, the size of connected components in $\RandomG_{SP}$ can be bounded by the size of reachability sets of a random graph with LPC. Let $A_{ij} = X_j\one\{(i,j)\in\mathcal{E}\}$, where $X_i$ are the Bernouilli random variables indicating the presence or absence of a node $i$ in $\RandomG_{SP}$, and $\RandomG_{SP}' = \RandomG(n,A)$.
\begin{proposition}\label{prop:sitePerco}
$\RandomG_{SP}'$ is a random graph with LPC. Furthermore, if $R(\mathcal{I},A)$ is the reachable set of $\mathcal{I}\subset\OneToN$ in $\RandomG_{SP}'$, then
\begin{equation}
\card{R(\mathcal{I},A)} \geq \sum_k {C_k(\RandomG_{SP})\one\{\mathcal{V}_k \cap \mathcal{I} \neq \emptyset\}} \as,
\end{equation}
where $\mathcal{V}_k$ is the $k^{th}$-largest connected component of $\RandomG_{SP}$ and $C_k(\RandomG_{SP}) = \card{\mathcal{V}_k}$.
\end{proposition}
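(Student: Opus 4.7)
The plan is to handle the two claims of the proposition separately. The key structural observation underlying everything is that $A_{ij}=X_j\one\{(i,j)\in\mathcal{E}\}$: each directed edge indicator is a deterministic function of the head-survival variable $X_j$ alone, and the $(X_l)_{l\in\OneToN}$ are mutually independent Bernoulli variables by the site percolation construction. For the cardinality inequality, the plan is to establish the set inclusion $\mathcal{V}_k \subseteq R(\mathcal{I},A)$ for every connected component $\mathcal{V}_k$ of $\RandomG_{SP}$ that meets $\mathcal{I}$, and then sum cardinalities of pairwise disjoint components.

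To check (H1) I would note that $(k,l)\notin\mathcal{N}_{ij}$ forces in particular $l\neq j$, so $A_{ij}$ and $A_{kl}$ are deterministic functions of the distinct independent variables $X_j$ and $X_l$, hence independent. For (H2) I would compute $\Exp{A_{ij}\mid A_{-ij}=a}$ by case analysis in terms of $\mathcal{K}_j=\{k\neq i : (k,j)\in\mathcal{E}\}$. When $(i,j)\notin\mathcal{E}$ or $\mathcal{K}_j=\emptyset$, the conditional expectation is constant in $a$ and trivially non-decreasing. Otherwise, because $A_{kj}=X_j$ for every $k\in\mathcal{K}_j$, the coordinates $(a_{kj})_{k\in\mathcal{K}_j}$ fully determine $X_j$ on the support of $A_{-ij}$: if they are all zero then $X_j=0$ and $\Exp{A_{ij}\mid A_{-ij}=a}=0$, while if at least one equals one then $X_j=1$ and $\Exp{A_{ij}\mid A_{-ij}=a}=\one\{(i,j)\in\mathcal{E}\}$. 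In every case the resulting function of $a$ is non-decreasing, yielding (H2).

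For the lower bound on $\card{R(\mathcal{I},A)}$, fix a component $\mathcal{V}_k$ of $\RandomG_{SP}$ with $\mathcal{V}_k\cap\mathcal{I}\neq\emptyset$, pick $v\in\mathcal{V}_k\cap\mathcal{I}$, and for any $u\in\mathcal{V}_k$ take an undirected path $v=i_0,i_1,\dots,i_m=u$ inside $\mathcal{V}_k$. Every vertex satisfies $X_{i_l}=1$ and every consecutive pair $\{i_l,i_{l+1}\}$ is an edge of $\mathcal{G}$; since $\mathcal{G}$ is undirected both orientations belong to $\mathcal{E}$, so $A_{i_l,i_{l+1}}=X_{i_{l+1}}\one\{(i_l,i_{l+1})\in\mathcal{E}\}=1$. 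Hence $i_0,\dots,i_m$ is a directed path in $\RandomG_{SP}'$ and $u\in R(\mathcal{I},A)$, which gives $\mathcal{V}_k\subseteq R(\mathcal{I},A)$. Summing $C_k(\RandomG_{SP})=\card{\mathcal{V}_k}$ over the disjoint components hit by $\mathcal{I}$ yields the inequality. The main delicate point is (H2), since conditional expectations must be handled on the support of $A_{-ij}$ rather than on the whole cube; I would lean on the deterministic identity $A_{kj}=X_j$ for $k\in\mathcal{K}_j$ to pin down $X_j$ on that support, and then extend the mapping monotonically to all of $\{0,1\}^{n^2-1}$.
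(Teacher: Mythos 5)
Your proof is correct and follows essentially the same route as the paper's: (H1) from the fact that $A_{ij}$ and $A_{kl}$ depend on distinct independent survival variables whenever the heads differ, (H2) from the positive dependence of same-head edges through the shared variable $X_j$, and the cardinality bound from the inclusion of any component meeting $\mathcal{I}$ in the reachable set. If anything, your case analysis for (H2) is more careful than the paper's one-line appeal to positive correlation, since you correctly treat the conditional expectation on the support of $A_{-ij}$ and note the need for a monotone extension off it.
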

\begin{proof}
Since the $X_i$ are independent, for all $i,j,i',j'$, $A_{ij}$ and $A_{i'j'}$ are positively correlated if $j = j'$ and independent otherwise, which proves the LPC assumption. In order to prove the inequality, it suffices to see that, if there is an influencer in a connected component $\mathcal{V}_k$ of $\RandomG_{SP}$ (\ie $\mathcal{V}_k \cap \mathcal{I} \neq \emptyset$), then $\mathcal{V}_k$ is in reachable set $R(\mathcal{I},A)$.
\end{proof}

Since the inequality in \Proposition{prop:sitePerco} is the same starting point as the results derived for bond percolation, all the results of \Sec{sec:percolation} also apply to site percolation with the following Hazard radius:
\begin{equation}
\HazSpec = \rho\left(-\frac{\ln(1-p_i) + \ln(1-p_j)}{2}\one\{(i,j)\in\mathcal{E}\}\right).
\end{equation}

\section{Application to epidemiology}\label{sec:epidemiology}
In epidemiology, several models for the propagation of a disease in a population have been developped (\cite{Newman:2010:NI, kermack1932contributions, prakash2012threshold}), ranging from simple (\eg SI, SIS, SIR \cite{Newman:2010:NI, kermack1932contributions}) to more complex (\eg SIRS, SEIR, SEIV \cite{prakash2012threshold}) diffusion mechanisms. We here focus on the standard Susceptible-Infected-Removed (SIR) model, and show that its long-term behavior is a particular case of LPC random graphs.

\begin{model}[\textbf{Susceptible-Infected-Removed \cite{kermack1932contributions}}]
Let $\mathcal{G} = (\mathcal{V}, \mathcal{E})$ be an undirected network of $n$ nodes, $\AdjMat = \big(\one\{(i,j)\in \mathcal{E}\}\big)_{ij}$ its adjacency matrix, and $\delta, \beta > 0$. A Susceptible-Infected-Removed epidemic $SIR(\mathcal{G},\delta, \beta)$ is a stochastic process $(S_t, I_t,$ $R_t)_{t\geq 0}$, where $S_t$, $I_t$ and $R_t$ 
encode the state of each node of the network $\mathcal{G}$ during the epidemic by partitioning the nodes into three sets, depending on their infectious state: \emph{susceptible}, \emph{infected} or \emph{removed}.
Each edge of the graph transmits the disease at rate $\beta$, and each infected node recovers at rate $\delta$. More formally, let $t^I_i = \{t\geq 0 : i\in I_t\}$ be the time when node $i$ becomes infected. Then $\forall i\in I_0$, $t^I_i = 0$ and
\begin{equation}
\forall i\notin I_0, t^I_i = \min_{\{j\in\OneToN : T_{ji} < D_j\}} (t^I_j + T_{ji}),
\end{equation}
where $T_{ji}$ and $D_i$ are independent exponential random variables of expected value $1/\beta$ and $1/\delta$, respectively. Then, the recovery time of each node is $t^R_i = t^I_i + D_i$, and the sets $S_t$, $I_t$ and $R_t$ are given by $S_t = \{i\in\OneToN : t < t^I_i\}$, $I_t = \{i\in\OneToN : t^I_i \leq t < t^R_i\}$ and $R_t = \{i\in\OneToN : t^R_i \leq t\}$.
\end{model}

\subsection{Subcritical behavior in the standard SIR model}
We show here that \Theorem{th:mainResult} (through \Corollary{cor:simpleBounds}) further improves results on the SIR model in epidemiology. In order to determine the long-term behavior of the epidemic, the following theorem shows that the set $\lim_{t\rightarrow+\infty} R_t$ of recovered nodes is the reachable set of a random graph with LPC.

\begin{proposition}
Let $S_t$, $I_t$ and $R_t$ be (respectively) the sets of susceptible, infected and removed individuals at time $t\geq 0$ of an $SIR(\mathcal{G}, \delta, \beta)$ epidemic with transmission times $T_{ij}$ and recovery times $D_i$. Then, the random graph $\RandomG_{SIR} = \RandomG(n,A)$ with adjacency matrix $A_{ij} = \one\{\{i,j\}\in\mathcal{E} \mbox{ and } T_{ij} < D_i\}$ is a random graph with LPC and, if $R(I_0,A)$ is the reachable set of $I_0$ in $\RandomG_{SIR}$, then $\lim_{t\rightarrow +\infty} R_t = R(I_0,A)$.
\end{proposition}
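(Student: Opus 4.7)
The proposition splits into two independent claims: (a) $\RandomG_{SIR}$ is a random graph with LPC, and (b) $\lim_{t\to\infty} R_t = R(I_0,A)$. My plan is to verify (H1) and (H2) of \Definition{def:LPC} directly from the directed-clock representation of the SIR dynamics, and then to prove (b) by induction on the infection times $t^I_i$.

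For (H1), each $A_{ij}$ is a deterministic function of the pair of independent exponential clocks $(T_{ij},D_i)$; whenever $(k,l)\notin\mathcal{N}_{ij}$ one has $k\neq i$ and $(k,l)\notin\{(i,j),(j,i)\}$, so $(T_{kl},D_k)$ involves different clocks from $(T_{ij},D_i)$ and $A_{ij}\perp A_{kl}$ follows. For (H2), I would use the key observation that $A_{ij}$ is conditionally independent of $A_{-ij}$ given $D_i$: once $D_i$ is known, $A_{ij}$ is determined by the clock $T_{ij}$, which is independent of every other clock appearing in $A_{-ij}$. This yields
\[
\Exp{A_{ij}\mid A_{-ij}=a} \;=\; \one\{\{i,j\}\in\mathcal{E}\}\int_0^\infty (1-e^{-\beta d})\,f(d;a)\,dd,
\]
where $f(\cdot;a)$ is the posterior density of $D_i$ given $A_{-ij}=a$. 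Among the $A_{kl}$'s, only the variables $A_{ik}$ with $k\in N(i)\setminus\{j\}$ actually involve $D_i$; the others ($A_{ji}$, $A_{kj}$ for $k\neq i$, and everything outside $\mathcal{N}_{ij}$) are independent of $D_i$ by the directed-clock structure, so the posterior depends on $a$ only through the sufficient statistic $s=\sum_{k\in N(i)\setminus\{j\}} a_{ik}$, and a Bayes computation gives $f(d;a)\propto \delta e^{-\delta d}(1-e^{-\beta d})^s (e^{-\beta d})^{m-s}$ with $m=\card{N(i)\setminus\{j\}}$. This family has monotone likelihood ratio in $d$ with parameter $s$ (the ratio of consecutive densities is $(1-e^{-\beta d})/e^{-\beta d}$, strictly increasing), so the posterior of $D_i$ is stochastically non-decreasing in $s$; since $d\mapsto 1-e^{-\beta d}$ is non-decreasing, the conditional expectation is non-decreasing in $s$, i.e.\ non-decreasing in each coordinate of $a$, which gives (H2).

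For (b), almost-sure finiteness of each $D_i$ yields $\lim_{t\to\infty} R_t = \{i : t^I_i<\infty\}$. The inclusion $\{t^I_i<\infty\}\subseteq R(I_0,A)$ is obtained by induction on $t^I_i$: for $i\notin I_0$, the recursion $t^I_i = t^I_j+T_{ji}$ selects a predecessor $j$ with $T_{ji}<D_j$, so $A_{ji}=1$, and by induction at the strictly earlier time $t^I_j$ we have $j\in R(I_0,A)$, whence $i\in R(I_0,A)$ via the path that the edge $(j,i)$ extends. Conversely, along any path $i_0\in I_0\to\cdots\to i_k=i$ in $\RandomG_{SIR}$, each edge satisfies $T_{i_{l-1}i_l}<D_{i_{l-1}}$, so infection propagates step by step and $t^I_i\leq t^I_{i_0}+\sum_l T_{i_{l-1}i_l}<\infty$.

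The delicate step is (H2): several coordinates of $a$ can be perturbed simultaneously, so one must ensure coordinate-wise monotonicity of the conditional expectation rather than just pairwise positive correlation. The MLR reduction above turns this into a one-dimensional statement in the sufficient statistic $s$, uniform in the irrelevant coordinates, and is the place I would write out the argument most carefully; the remainder is routine bookkeeping with the SIR dynamics.
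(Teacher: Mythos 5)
Your proof is correct and follows essentially the same route as the paper's: (H1) from the independence of the clocks $(T_{ij},D_i)$, (H2) from the monotonicity of $a \mapsto \Exp{A_{ij} \mid A_{-ij}=a}$ driven by the shared recovery time $D_i$, and the identification $\lim_{t\to\infty}R_t=\{i : t^I_i<\infty\}=R(I_0,A)$ via the infection-time recursion. The one place you go beyond the paper is the (H2) monotonicity, which the paper merely asserts for the conditional probability $\Prob{T_{ij}<D_i \mid \max_{k\in\mathcal{N}^1_{ij}(a)}T_{ik}<D_i\leq\min_{k\in\mathcal{N}^0_{ij}(a)}T_{ik}}$, whereas you actually prove it by reducing to the sufficient statistic $s$ and invoking the monotone likelihood ratio of the posterior of $D_i$ --- a worthwhile addition, since that is the only nontrivial step in the whole argument.
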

\begin{proof}
$\RandomG_{SIR}$ is a random graph with LPC since only outgoing edges of a node are correlated together, and this correlation is positive due to the fact that $\Exp{A_{ij}|A_{-ij} = a} = \Prob{T_{ij} < D_i ~|~ \max_{k\in\mathcal{N}_{ij}^1(a)} T_{ik} < D_i \leq \min_{k\in\mathcal{N}_{ij}^0(a)} T_{ik}}$, where $\mathcal{N}_{ij}^b(a) = \{k\neq j \mbox{ such that } \{k,i\}\in\mathcal{E} \mbox{ and } a_{ik} = b\}$ for $b\in\{0,1\}$, which is non-decreasing \wrt $a$. Finally, $\lim_{t\rightarrow +\infty} R_t = \{i\in\OneToN : t^I_i < +\infty\}$, and a node $i$ is in $\lim_{t\rightarrow +\infty} R_t$ if and only if $i\in I_0$ or $i$ has an infected neighbor $j$ that transmitted the disease, \ie such that $j\in R(I_0,A)$, $\{j,i\}\in\mathcal{E}$ and $T_{ji} < D_j$, or equivalently $A_{ji} = 1$.
\end{proof}

In the sequel, we will refer to the number of infected nodes through the epidemic as $\sigma(\mathcal{I})$. The  Hazard matrix $\mathcal{H}$ of $\RandomG_{SIR}$ is  given by  $\ln(1+\frac{\beta}{\delta}) \cdot \AdjMat$ and hence $\HazSpec = \ln(1+\frac{\beta}{\delta}) \cdot \rho(\AdjMat)$. A direct application of
Corollary \ref{cor:simpleBounds} leads to the following result.

\begin{corollary}\label{cor:epiThSIR}
We consider an $SIR(\mathcal{G},\delta,\beta)$ epidemic with $\delta>0$. We denote by $\AdjMat$ the symmetric adjacency matrix of $\mathcal{G}$, by $\HazSpec = \ln(1+\frac{\beta}{\delta}) \cdot \rho(\AdjMat)$ its Hazard radius, and by $\mathcal{I}$ the initial set of influencers of size $n_0$.  If  $\frac{\beta}{\delta} < \exp(\frac{1}{\rho(\AdjMat)}) - 1$, then we have
\begin{equation}
\sigma(\mathcal{I}) \le n_0+\sqrt{\frac{\HazSpec}{1-\HazSpec}} \sqrt{n_0(n-n_0)} ~.
\end{equation}
\end{corollary}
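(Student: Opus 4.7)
The plan is to recognize Corollary 6 as a direct specialization of the subcritical branch of Corollary 1 to the random graph $\RandomG_{SIR}$ constructed just above. The proposition immediately preceding this corollary already establishes two of the three ingredients we need: it identifies the final set of recovered nodes with the reachable set $R(I_0, A)$ of the LPC random graph $\RandomG_{SIR} = \RandomG(n, A)$ with $A_{ij} = \one\{\{i,j\} \in \mathcal{E} \text{ and } T_{ij} < D_i\}$, so that $\sigma(\mathcal{I}) = \Exp{|R(\mathcal{I},A)|}$ falls exactly within the scope of Theorem 1. What remains to do is (i) compute the Hazard radius of $\RandomG_{SIR}$ and (ii) translate the subcriticality condition.

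For step (i), I would use the fact that $T_{ij}$ (exponential of rate $\beta$) and $D_i$ (exponential of rate $\delta$) are independent, so $\Prob{T_{ij} < D_i} = \beta/(\beta+\delta)$, hence $\Exp{A_{ij}} = \frac{\beta}{\beta+\delta} \AdjMat_{ij}$. Plugging into Definition 1 yields
\begin{equation*}
\HazMat_{ij} = -\ln\!\left(1 - \tfrac{\beta}{\beta+\delta}\right) \AdjMat_{ij} = \ln\!\left(1 + \tfrac{\beta}{\delta}\right) \AdjMat_{ij}.
\end{equation*}
Since $\mathcal{G}$ is undirected, $\HazMat$ is symmetric, so $(\HazMat + \HazMat^\top)/2 = \HazMat$ and $\HazSpec = \ln(1+\beta/\delta)\,\rho(\AdjMat)$ as asserted. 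For step (ii), the hypothesis $\beta/\delta < \exp(1/\rho(\AdjMat)) - 1$ is equivalent to $\ln(1+\beta/\delta) < 1/\rho(\AdjMat)$, i.e.\ $\HazSpec < 1$, placing us in the subcritical regime of Corollary 1. Invoking its first line then delivers the claimed inequality.

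The one genuine subtlety, which I would address at the end, is that Corollary 1 formally states the subcritical bound under $\HazSpec < 1 - \delta_n$ with $\delta_n = \bigl(n_0/(4(n-n_0))\bigr)^{1/3}$, whereas the corollary here only assumes $\HazSpec < 1$. I would handle this by noting that the explicit bound $\sqrt{\HazSpec/(1-\HazSpec)}\sqrt{n_0(n-n_0)}$ is obtained from the smallest root $\gamma_1$ of the equation in Theorem 1 and remains a valid (though possibly loose) relaxation for the entire range $\HazSpec < 1$; alternatively, one can simply note that for any fixed epidemic parameters with $\HazSpec < 1$ strictly, one has $\HazSpec < 1 - \delta_n$ for $n$ large enough, so the bound is meaningful in the asymptotic regime of interest. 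This reconciliation step is the only place requiring any thought; the rest of the argument is a transparent substitution of the SIR Hazard radius into an already-proven bound.
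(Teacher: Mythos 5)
Your proposal is correct and follows exactly the paper's route: the paper likewise computes $\Exp{A_{ij}}=\beta/(\beta+\delta)$ on the edges of $\mathcal{G}$, reads off $\HazMat = \ln(1+\frac{\beta}{\delta})\cdot\AdjMat$ and $\HazSpec=\ln(1+\frac{\beta}{\delta})\rho(\AdjMat)$, and invokes the subcritical branch of \Corollary{cor:simpleBounds}, which the paper presents as a one-line ``direct application.'' Your extra reconciliation of the $\HazSpec<1-\delta_n$ hypothesis is a point the paper glosses over, and your first resolution is the right one: the bound $\gamma_1\leq\gamma_0(\HazSpec)+\sqrt{\HazSpec(1-\gamma_0(\HazSpec))/(1-\HazSpec(1-\gamma_0(\HazSpec)))}\sqrt{n_0/(n-n_0)}$ from \Lemma{lem:gammaBounds} holds for every $\HazSpec<1$ (where $\gamma_0(\HazSpec)=0$), the threshold $\delta_n$ only governing which of the three bounds is sharpest.
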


It was recently shown by Draief, Ganesh and Massouli\'e (\cite{draief2008}) that, in the case of undirected networks, and if $\beta \rho(\AdjMat)<\delta$, we have the following bound on the influence for a fixed set of influencer nodes:
\begin{equation}
\label{eqn:Massoulie}
\sigma(\mathcal{I}) \leq \frac{\sqrt{n n_0}}{1-\frac{\beta}{\delta} \rho(\AdjMat)}~.
\end{equation}
As we will show now, Corollary \ref{cor:epiThSIR} improves the result of \cite{draief2008, prakash2012threshold} in two directions: weaker condition and tighter constants in the upper bound. Indeed, when $\rho(\AdjMat) \gg 1$, $\frac{1}{\rho(\AdjMat)}$ is a good approximation of $\exp(\frac{1}{\rho(\AdjMat)}) - 1$. However, the two quantities may differ substantially on very sparse networks, for which $\rho(\AdjMat)$ is close to $1$. For example, for an n-cycle graph, we have $\AdjMat_{ij} = \one\{j = i \pm 1 \mod n\}$ where mod is the modulo operator, which leads to $\rho(\AdjMat) = 2$ and $\exp(\frac{1}{\rho(\AdjMat)}) - 1 \approx 0.65 > 0.5$.
Now, as far as the comparison between the two rates is concerned, we offer the following lemma which assesses the tightness of Corollary \ref{cor:epiThSIR} with respect to the upper bound in \Eq{eqn:Massoulie}.

\begin{lemma}\label{lem:lemmaMassoulie}
We use the same notations as in Corollary \ref{cor:epiThSIR}. If
$\beta \rho(\AdjMat) < \delta$, then we have:
\begin{equation}
n_0+\sqrt{\frac{\HazSpec}{1-\HazSpec}} \sqrt{n_0(n-n_0)} \leq \frac{\sqrt{n n_0}}{1-\frac{\beta}{\delta} \rho(\AdjMat)}~.
\end{equation}
\end{lemma}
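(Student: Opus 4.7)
The plan is to set $t = \tfrac{\beta}{\delta}\rho(\AdjMat)$, so by hypothesis $t < 1$, and the right-hand side equals $\tfrac{\sqrt{nn_0}}{1-t}$. The first step is to dispose of the logarithm: since $\ln(1+u) \leq u$ for $u \geq 0$, we have $\HazSpec = \ln(1+\beta/\delta)\rho(\AdjMat) \leq t < 1$, and since $u \mapsto u/(1-u)$ is increasing on $[0,1)$, this gives $\HazSpec/(1-\HazSpec) \leq t/(1-t)$. In particular $\HazSpec < 1$ so the left-hand side is well-defined, and it suffices to establish
\begin{equation*}
n_0 + \sqrt{\tfrac{t}{1-t}}\sqrt{n_0(n-n_0)} \;\leq\; \frac{\sqrt{n\, n_0}}{1-t}.
\end{equation*}

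Next I would divide through by $\sqrt{n_0}$ and multiply by $(1-t)$. After simplifying the resulting radicals, the inequality becomes
\begin{equation*}
(1-t)\sqrt{n_0} + \sqrt{t(1-t)(n-n_0)} \;\leq\; \sqrt{n}.
\end{equation*}
This is the form where the inequality reveals itself most cleanly.

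The final step is a direct application of the Cauchy-Schwarz inequality to the pair of vectors $\bigl((1-t),\sqrt{t(1-t)}\bigr)$ and $\bigl(\sqrt{n_0},\sqrt{n-n_0}\bigr)$. Their squared norms are $(1-t)^2 + t(1-t) = (1-t)\bigl[(1-t)+t\bigr] = 1-t$ and $n_0 + (n-n_0) = n$ respectively, so the inner product is bounded above by $\sqrt{(1-t)n} \leq \sqrt{n}$, which is exactly what we need.

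I do not anticipate any real obstacle: the only subtlety is recognizing that the natural way to compare the two bounds is to first absorb the logarithmic factor via $\ln(1+u) \leq u$, and then to recast the claim into a form where a single application of Cauchy-Schwarz closes the argument. The tightness improvement advertised in the text is reflected in the fact that the Cauchy-Schwarz step gains an extra factor $\sqrt{1-t}$ that is being discarded for the sake of a clean upper bound.
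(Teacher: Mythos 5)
Your proof is correct, and it verifies the key inequality by a genuinely different mechanism than the paper. Both arguments begin identically, absorbing the logarithm via $\ln(1+u)\leq u$ so that everything reduces to showing $n_0+\sqrt{r/(1-r)}\,\sqrt{n_0(n-n_0)}\leq \sqrt{nn_0}/(1-r)$ for $r\in[0,1)$ (you apply this at $r=t=\frac{\beta}{\delta}\rho(\AdjMat)$ after monotonicity of $u\mapsto u/(1-u)$; the paper applies it at $r=\HazSpec$ and then uses monotonicity of $1/(1-r)$ --- the two orderings are equivalent). Where you diverge is in how this core inequality is certified: the paper introduces $f(r)= n_0+\sqrt{r/(1-r)}\,\sqrt{n_0(n-n_0)}-\sqrt{nn_0}/(1-r)$ and computes, via a one-variable optimization left as a ``simple analysis,'' that $\max_{r}f(r)=n_0\bigl(1-\frac{3}{4}\sqrt{n/n_0}-\frac{1}{4}\sqrt{n_0/n}\bigr)\leq 0$; you instead normalize by $\sqrt{n_0}\,$ and $(1-t)$ to reach $(1-t)\sqrt{n_0}+\sqrt{t(1-t)(n-n_0)}\leq\sqrt{n}$ and close with a single Cauchy--Schwarz application, since $(1-t)^2+t(1-t)=1-t\leq 1$. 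Your route is shorter, avoids any calculus, and makes the slack explicit (the discarded factor $\sqrt{1-t}$); the paper's route, by exhibiting the exact maximum of $f$, shows in addition precisely how far from sharp the comparison is and that equality in the worst case over $r$ requires $n_0=n$. Either argument is a complete proof.
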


\begin{proof}
First, $\HazSpec = \ln(1+\frac{\beta}{\delta})\rho(\AdjMat) \leq \frac{\beta}{\delta}\rho(\AdjMat)$. Then, we introduce the function $$f: r \rightarrow n_0+\sqrt{\frac{r}{1-r}} \sqrt{n_0(n-n_0)} - \frac{\sqrt{n n_0}}{1- r}~.$$ A simple analysis shows that $\max_{r\in[0,1]}f(r) = n_0\left(1 - \frac{3}{4}\sqrt{\frac{n}{n_0}} - \frac{1}{4}\sqrt{\frac{n_0}{n}}\right) \leq 0$ when $n_0 \leq n$, which proves the lemma.
\end{proof}

Moreover, these new bounds capture with increased accuracy the behavior of the influence in extreme cases. In the limit $\beta \rightarrow 0$, the difference between the two bounds is significant, because \Theorem{th:mainResult} yields $\sigma(\mathcal{I}) \rightarrow n_0$ whereas \Eq{eqn:Massoulie} only ensures $\sigma(\mathcal{I}) \leq \sqrt{n n_0}$. When $n=n_0$, \Theorem{th:mainResult} also ensures that $\sigma(\mathcal{I}) = n_0$ whereas \Eq{eqn:Massoulie} yields $ \sigma(\mathcal{I}) \leq \frac{n_0}{1-\frac{\beta}{\delta} \rho(\AdjMat)} $. Secondly, \Theorem{th:mainResult} also describes the explosive behavior in the SIR model and leads to bounds in the case where $\beta \rho(\AdjMat) \geq \delta$, as we will see below.

\subsection{Behavior near the epidemic threshold}
The regime around $1$ of $\ln(1+\frac{\beta}{\delta}) \cdot \rho(\AdjMat)$ can also be derived from the generic results of \Sec{sec:bounds}.
\begin{corollary}[\textbf{Critical behavior of SIR}]
We use the same notations as in Corollary \ref{cor:epiThSIR}. If $|\HazSpec - 1| \leq \left(\frac{n_0}{4(n - n_0)}\right)^{1/3}$, then we have
\begin{equation}
\sigma(\mathcal{I}) \leq n_0 + 2^{4/3} n_0^{1/3} (n - n_0)^{2/3} ~.
\end{equation}
\end{corollary}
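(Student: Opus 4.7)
The statement is essentially a direct specialization of the critical-regime case of Corollary~\ref{cor:simpleBounds} to the SIR setting, so the plan is to exhibit the chain of reductions already established earlier in the paper rather than prove anything new from scratch.

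First, I would invoke the proposition preceding Corollary~\ref{cor:epiThSIR}, which identifies the set of ever-infected nodes $\lim_{t\to\infty} R_t$ with the reachable set $R(I_0,A)$ in the auxiliary random graph $\RandomG_{SIR} = \RandomG(n,A)$, where $A_{ij} = \one\{\{i,j\}\in\mathcal{E} \text{ and } T_{ij} < D_i\}$, and shows that $\RandomG_{SIR}$ is LPC. Consequently, the influence $\sigma(\mathcal{I})$ of the initial infected set $\mathcal{I} = I_0$ of size $n_0$ coincides with the LPC influence to which Theorem~\ref{th:mainResult} and its Corollary~\ref{cor:simpleBounds} apply.

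Second, I would recall the Hazard matrix computation already performed in the paragraph preceding Corollary~\ref{cor:epiThSIR}: since $T_{ij}$ and $D_i$ are independent exponentials of rates $\beta$ and $\delta$, one has $\Exp{A_{ij}} = \one\{(i,j)\in\mathcal{E}\}\cdot \beta/(\beta+\delta)$, hence $\HazMat = \ln(1+\beta/\delta)\AdjMat$ and $\HazSpec = \ln(1+\beta/\delta)\cdot \rho(\AdjMat)$. This is the same Hazard radius used in Corollary~\ref{cor:epiThSIR}.

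Finally, I would apply Corollary~\ref{cor:simpleBounds} in the critical branch, which asserts that whenever $|\HazSpec - 1| \leq \delta_n$ with $\delta_n = \left(\tfrac{n_0}{4(n-n_0)}\right)^{1/3}$, the influence satisfies
\begin{equation*}
\sigma(\mathcal{I}) \leq n_0 + 2^{4/3} n_0^{1/3}(n-n_0)^{2/3}.
\end{equation*}
The hypothesis of the corollary matches this condition exactly, so the conclusion follows verbatim. There is no genuine obstacle: the only substantive work has already been discharged upstream, namely (a) the LPC identification of $\RandomG_{SIR}$ and (b) the critical-regime bound of Corollary~\ref{cor:simpleBounds}; the present statement is a transcription of those results into epidemiological language.
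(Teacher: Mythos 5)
Your proposal is correct and follows exactly the paper's own route: the paper's proof is the one-line remark that the statement is ``a direct application of Corollary~\ref{cor:simpleBounds} to $\RandomG_{SIR}$'', and you have simply made explicit the same chain (LPC identification of $\RandomG_{SIR}$, the Hazard radius $\HazSpec = \ln(1+\beta/\delta)\rho(\AdjMat)$, and the critical branch of Corollary~\ref{cor:simpleBounds}). Nothing further is needed.
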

\begin{proof}
This is also a direct application of \Corollary{cor:simpleBounds} to $\RandomG_{SIR}$.
\end{proof}

More specifically, the behavior when $\beta \rho(\AdjMat) = \delta$ depends on the rate at which the spectral radius of the adjacency matrix diverges \wrt $n$.

\begin{corollary}
We use the same notations as in \Corollary{cor:epiThSIR}. Assume $n_0=O(1)$ and $\rho(\AdjMat) = O(n^\alpha)$ for $\alpha \geq 0$. If $\beta \rho(\AdjMat) = \delta$, then we have
\begin{equation}
\sigma(\mathcal{I}) = O\left(n^{\min\{\frac{1+\alpha}{2}, \frac{2}{3}\}}\right) ~.
\end{equation}
\end{corollary}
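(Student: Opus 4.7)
The plan is to apply \Corollary{cor:simpleBounds} directly to $\RandomG_{SIR}$ and bookkeep carefully. First I would rewrite the Hazard radius under the threshold condition: $\beta\rho(\AdjMat) = \delta$ gives $\beta/\delta = 1/\rho_n$ with $\rho_n := \rho(\AdjMat)$, so $\HazSpec = \rho_n \ln(1+1/\rho_n)$. The analytic input I need is the elementary estimate $1 - \HazSpec = \Theta(1/\rho_n)$ for $\rho_n \geq 1$, which follows from the expansion $\ln(1+x) = x - x^2/2 + O(x^3)$ and gives concretely, say, $1 - \HazSpec \geq 1/(6\rho_n)$. Combined with the hypothesis $\rho_n = O(n^\alpha)$, this yields $1/(1-\HazSpec) = O(n^\alpha)$; the case where $\rho_n$ stays bounded only makes $1/(1-\HazSpec) = O(1)$ and is easier.

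Next I would invoke \Corollary{cor:simpleBounds}. Since $n_0 = O(1)$, the threshold $\delta_n = (n_0/(4(n-n_0)))^{1/3} = \Theta(n^{-1/3})$, and the supercritical branch cannot occur because $\HazSpec < 1$ always. In the subcritical regime the corollary gives $\sigma(\mathcal{I}) \leq n_0 + \sqrt{\HazSpec/(1-\HazSpec)}\sqrt{n_0(n-n_0)} = O(\sqrt{n/(1-\HazSpec)})$. The key move is to bound $1/(1-\HazSpec)$ by the minimum of the two estimates available here — namely $O(n^\alpha)$ from the first step and $1/\delta_n = O(n^{1/3})$ from the subcriticality condition $1 - \HazSpec > \delta_n$ — giving $1/(1-\HazSpec) = O(n^{\min(\alpha, 1/3)})$, and hence $\sigma(\mathcal{I}) = O(n^{(1+\min(\alpha, 1/3))/2}) = O(n^{\min((1+\alpha)/2,\, 2/3)})$. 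In the critical regime the corollary directly yields $\sigma(\mathcal{I}) = O(n^{2/3})$; moreover being in that regime forces $1/(1-\HazSpec) \geq 1/\delta_n = \Omega(n^{1/3})$, which together with $1/(1-\HazSpec) = O(n^\alpha)$ compels $\alpha \geq 1/3$, so that $\min((1+\alpha)/2, 2/3) = 2/3$ and the critical bound has exactly the desired form.

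There is no serious obstacle. The only point requiring attention is the simultaneous use of the two upper bounds on $1/(1-\HazSpec)$ — the structural one coming from $\rho_n = O(n^\alpha)$ and the one implied by the regime partition — to interpolate to the correct exponent $\min((1+\alpha)/2, 2/3)$ rather than the weaker bound one would obtain by using either estimate in isolation.
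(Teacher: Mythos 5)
Your proposal is correct and follows essentially the same route as the paper: both rest on the elementary estimate $1-\HazSpec = \Omega(1/\rho(\AdjMat)) = \Omega(n^{-\alpha})$ at the threshold $\beta\rho(\AdjMat)=\delta$, combined with \Corollary{cor:simpleBounds}. The paper simply applies the subcritical-form bound (giving $O(n^{(1+\alpha)/2})$) and the critical-form bound (giving $O(n^{2/3})$, valid for all $\HazSpec\leq 1$) and takes the minimum, whereas you recover the same minimum by splitting on the regime partition and using the threshold $\delta_n=\Theta(n^{-1/3})$; the two are equivalent.
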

\begin{proof}
If the graph is empty, then $\sigma(\mathcal{I}) = 0$. Otherwise, $\rho(\AdjMat) \geq 1$ and $\HazSpec = \ln(1+\frac{1}{\rho(\AdjMat)})\rho(\AdjMat) \leq 1 - \frac{1-\ln{2}}{\rho(\AdjMat)}$, the critical bound of \Corollary{cor:simpleBounds} implies that $\sigma(\mathcal{I}) = O(n^{2/3})$, while the subcritical  bound implies that $\sigma(\mathcal{I}) = O(n^{\frac{1+\alpha}{2}})$.
\end{proof}

The behavior in $O(n^{2/3})$ of the size of the epidemic in the critical regime was already known for the more simple N-intertwinned SIR model \cite{PhysRevE.69.050901} (in which the three populations are assumed to be mixed uniformly). However, this result is, up to our knowledge, the first to prove such a behavior in the more general case of epidemics on networks. Finally, note that \Proposition{prop:tightness} implies that the behavior in $O(n^{2/3})$ is tight, in the sense that some networks do behave accordingly in the critical regime.

\subsection{Generic incubation period}
\Theorem{th:mainResult} applies to more general cases than the classical homogeneous SIR model, and allows infection and recovery rates to vary across individuals. Also, our model allows for incubation times which display a non-exponential behavior, and thus is more adapted to realistic scenarios. Indeed, incubation periods for different individuals generally follow a log-normal distribution \cite{nelson2007epidemiology}, which indicates that SIR with a log-normal recovery rate of removal might be well-suited to model real-world infections.

For each node $i$, let the incubation time $D_i$ (\ie the time for an infected node to recover) be a random variable drawn according to a certain probability distribution $P_D$. In such a case, the Hazard radius is
\begin{equation}
\HazSpec = -\rho(\AdjMat)\ln\left(\Exp{e^{-\beta D}}\right),
\end{equation}
and a sufficient condition for subcriticality is $\beta \rho(\mathcal{A}) \Exp{D} < 1$, where $\Exp{D} = \int x P_D(x)dx$.

\begin{corollary}[\textbf{Generic incubation period}]\label{cor:genericIncubationTime}
We consider a graph of contaminated nodes obtained after the realization of an SIR contagion process with incubation times drawn according to the probability distribution $P_D$. We denote by $\AdjMat$ its symmetric adjacency matrix, by $\mathcal{I}$ its initial set of influencers of size $n_0=O(1)$, and $\Exp{D} = \int x P_D(x)dx$.

If $\beta \rho(\mathcal{A}) \Exp{D} < 1$, then we have
\begin{equation}
\sigma(\mathcal{I}) = O(\sqrt{n}) ~.
\end{equation}
\end{corollary}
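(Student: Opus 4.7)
The plan is to apply the subcritical worst-case bound of \Corollary{cor:simpleBounds} to the random graph $\RandomG_{SIR}=\RandomG(n,A)$ with $A_{ij}=\one\{\{i,j\}\in\mathcal{E}\}\cdot\one\{T_{ij}<D_i\}$, where the $T_{ij}\sim\mathrm{Exp}(\beta)$ and the $D_i\sim P_D$ are mutually independent. The first step is to verify that $\RandomG_{SIR}$ remains a random graph with LPC and still encodes the set of eventually infected nodes. The identification $\lim_{t\to+\infty}R_t=R(I_0,A)$ carries over verbatim from the exponential case, and (H1) is immediate because $A_{ij}$ and $A_{kl}$ share randomness only when $k=i$ or $(k,l)=(i,j)$. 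The only argument that genuinely needs adapting is (H2), where memorylessness is no longer available: I would replace it by a likelihood-ratio computation showing that the conditional density of $D_i$ given $A_{-ij}=a$ is proportional to $P_D(d)(1-e^{-\beta d})^{|\mathcal{N}^1_{ij}(a)|}e^{-\beta d\,|\mathcal{N}^0_{ij}(a)|}$, so that the density ratio at $a\leq a'$ is $(e^{\beta d}-1)^{|\mathcal{N}^1_{ij}(a')|-|\mathcal{N}^1_{ij}(a)|}$, non-decreasing in $d$. Hence $D_i\mid A_{-ij}=a$ stochastically increases with $a$, and so does $\Exp{A_{ij}\mid A_{-ij}=a}=\Exp{1-e^{-\beta D_i}\mid A_{-ij}=a}$.

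Next I would read off the Hazard-radius formula stated before the corollary: independence of $T_{ij}$ and $D_i$ gives $\Exp{A_{ij}}=\one\{\{i,j\}\in\mathcal{E}\}(1-\Exp{e^{-\beta D}})$, so $\HazMat=-\ln(\Exp{e^{-\beta D}})\,\AdjMat$ and $\HazSpec=-\rho(\AdjMat)\ln(\Exp{e^{-\beta D}})$. The key analytical step is then Jensen's inequality applied to the convex function $-\ln$, which gives $-\ln(\Exp{e^{-\beta D}})\leq\Exp{\beta D}=\beta\Exp{D}$, and therefore $\HazSpec\leq\beta\rho(\AdjMat)\Exp{D}<1$ by assumption, with a gap that is uniform in $n$.

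Finally, since $n_0=O(1)$, the critical-band width $\delta_n=(n_0/(4(n-n_0)))^{1/3}$ vanishes, so for $n$ large enough the subcritical case of \Corollary{cor:simpleBounds} applies and yields $\sigma(\mathcal{I})\leq n_0+\sqrt{\HazSpec/(1-\HazSpec)}\sqrt{n_0(n-n_0)}=O(\sqrt{n})$, which is the claim. The only substantive obstacle is the LPC verification for arbitrary $P_D$, since one cannot simply reuse the memorylessness argument of the exponential case; once the likelihood-ratio monotonicity above is in hand, everything else is a short assembly of the Hazard-radius formula, Jensen's inequality, and the already-established subcritical bound.
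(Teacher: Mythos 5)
Your proposal is correct and follows essentially the same route as the paper: compute $\Exp{A_{ij}}=1-\Exp{e^{-\beta D}}$ so that $\HazSpec=-\rho(\AdjMat)\ln(\Exp{e^{-\beta D}})$, apply Jensen's inequality to get $\HazSpec\leq\beta\rho(\AdjMat)\Exp{D}<1$, and invoke the subcritical case of \Corollary{cor:simpleBounds}. Your explicit likelihood-ratio verification of (H2) for a general incubation law $P_D$ is a useful addition of detail that the paper leaves implicit, but it does not change the argument.
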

\begin{proof}
Since, for $\RandomG_{SIR} = \RandomG(n,A)$, $\Exp{A_{ij}} = \Prob{T_{ij} < D_i} = 1 - \Exp{e^{-\beta D}}$, a direct application of \Corollary{cor:simpleBounds} to $\RandomG_{SIR}$ returns that the epidemic is subcritical if $\HazSpec = -\rho(\AdjMat)\ln\left(\Exp{e^{-\beta D}}\right) < 1$. Jensen's inequality on $\Exp{e^{-\beta D}}$ leads to the desired result.
\end{proof}

In the log-normal case, \Corollary{cor:genericIncubationTime} gives the following bound on the epidemic threshold:
\begin{corollary}[\textbf{Log-normal incubation period}]
We consider a graph of contaminated nodes obtained after the realization of an SIR contagion process with incubation times drawn according to a log-normal distribution of parameters $\mu_D$ and $\sigma_D$. We denote by $\AdjMat$ its symmetric adjacency matrix, by $\mathcal{I}$ its initial set of influencers of size $n_0=O(1)$.
If ${\displaystyle \mu_D + \frac{\sigma_D^2}{2} < -\ln\left(\beta \rho(\mathcal{A})\right)}$, then we have
\begin{equation}
\sigma(\mathcal{I}) = O(\sqrt{n}) ~.
\end{equation}
\end{corollary}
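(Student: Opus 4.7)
The plan is to obtain this result as an immediate specialization of the preceding \Corollary{cor:genericIncubationTime} to the log-normal case. First I would recall the moment structure of the log-normal distribution: if $D$ has parameters $\mu_D$ and $\sigma_D$, then $\ln D \sim \mathcal{N}(\mu_D, \sigma_D^2)$, so by the standard formula for the moment generating function of a Gaussian evaluated at $1$,
\begin{equation}
\Exp{D} = \exp\!\left(\mu_D + \tfrac{\sigma_D^2}{2}\right).
\end{equation}

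Next I would translate the sufficient subcriticality condition of \Corollary{cor:genericIncubationTime}, namely $\beta\rho(\AdjMat)\Exp{D} < 1$, using this identity. Taking logarithms, this is equivalent to
\begin{equation}
\mu_D + \tfrac{\sigma_D^2}{2} < -\ln\!\bigl(\beta\rho(\AdjMat)\bigr),
\end{equation}
which is exactly the hypothesis of the statement. Therefore the assumption guarantees $\beta\rho(\AdjMat)\Exp{D} < 1$, and \Corollary{cor:genericIncubationTime} yields $\sigma(\mathcal{I}) = O(\sqrt{n})$.

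There is essentially no obstacle here: the proof reduces to a one-line computation of $\Exp{D}$ for a log-normal random variable followed by a logarithmic rearrangement of the generic subcriticality condition. The only subtlety worth noting is that the bound is not tight in $\sigma_D$ because it passes through Jensen's inequality inside \Corollary{cor:genericIncubationTime} (the true Hazard radius depends on $\Exp{e^{-\beta D}}$, not on $\Exp{D}$), but tightness is not claimed in the statement.
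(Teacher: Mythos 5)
Your proof is correct and follows exactly the route the paper intends: the paper states this corollary as an immediate consequence of \Corollary{cor:genericIncubationTime}, leaving implicit precisely the computation $\Exp{D} = \exp(\mu_D + \sigma_D^2/2)$ and the logarithmic rearrangement you spell out. Your closing remark about the loss of tightness through Jensen's inequality is also accurate and consistent with the paper's own discussion preceding \Corollary{cor:genericIncubationTime}.
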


\section{Application to Information Cascades}\label{sec:IC}
In information propagation theory, information cascades have emerged as a relevant model for viral diffusion of ideas and opinions \cite{Kempe:2003:MSI:956750.956769, chen2009efficient, rodriguez2012influence, DBLP:conf/icml/Gomez-RodriguezBS11}. They are of two types:

\begin{model}[\textbf{Discrete-Time Information Cascades [$DTIC(\mathcal{P})$, \cite{chen2009efficient, Kempe:2003:MSI:956750.956769}]}]
At time $t=0$, only a set $\mathcal{I}$ of influencers is infected. Given a matrix $\mathcal{P}=(p_{ij})_{ij} \in [0,1]^{n\times n}$, each node $i$ that receives the contagion at time $t$ may transmit it at time $t+1$ along its outgoing edge $(i, j)\in\mathcal{E}$ with probability $p_{ij}$. Node $i$ cannot make any attempt to infect its neighbors in subsequent rounds. The process terminates when no more infections are possible.
\end{model}

\begin{model}[\textbf{Continuous-Time Information Cascades [$CTIC(\mathcal{F},T)$, \cite{rodriguez2012influence, DBLP:conf/icml/Gomez-RodriguezBS11}]}]
At time $t=0$, only a set $\mathcal{I}$ of influencers is infected. Given a matrix $\mathcal{F}=(f_{ij})_{ij}$ of non-negative integrable functions, each node $i$ that receives the contagion at time $t$ may transmit it at time $s>t$  along its outgoing edge $(i, j)\in\mathcal{E}$ with stochastic rate of occurrence $f_{ij}(s-t)$. The process terminates at a given deterministic time $T>0$. This model is much richer than the DTIC model, but we will focus here on its behavior when $T = \infty$.
\end{model}

Let $I_t\subset\OneToN$ be the set of infected nodes at time $t$. In $DTIC(\mathcal{P})$ and $CTIC(\mathcal{F},\infty)$, $I_t$ is non-decreasing \wrt $t$ and reaches a limit set $I_{\infty} = \lim_{t\rightarrow+\infty}I_t$. Due to the independence of transmission events along the edges of the graph, $I_{\infty}$ is the reachable set of $\mathcal{I}$ in a random graph $\RandomG_{IC} = \RandomG(n, A)$ with independent edge variables $A_{ij}$. Hence $\RandomG_{IC}$ is a random graph with LPC and the results of \Sec{sec:bounds} are applicable.

\begin{proposition}
Let $\mathcal{I}$ be a set of influencers, $\mathcal{P}=(p_{ij})_{ij} \in [0,1]^{n\times n}$ a matrix of transmission probabilities and $\mathcal{F}=(f_{ij})_{ij}$ a matrix of non-negative integrable functions. Then, under $DTIC(\mathcal{P})$ and $CTIC(\mathcal{F},\infty)$, the set of infected nodes at the end of the diffusion process is the reachable set $R(\mathcal{I},A)$ of $\mathcal{I}$ in a random graph with LPC, and \Theorems{th:mainResult}, \ref{th:uniformResult} and \ref{th:randomResult} are applicable with the following Hazard matrix:
\begin{equation}
\HazMat_{ij} = \left\{
\begin{array}{ll}
-\ln(1-p_{ij}) &\mbox{for } DTIC(\mathcal{P})\\
\int_0^\infty f_{ij}(t)dt &\mbox{for } CTIC(\mathcal{F},\infty)\\
\end{array}\right..
\end{equation}
\end{proposition}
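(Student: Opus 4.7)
The plan is to exhibit, for each of the two models, a random graph $\RandomG(n,A)$ with mutually \emph{independent} edge variables---which therefore satisfies LPC automatically via the remark following Definition~\ref{def:LPC}---such that the limit set $I_\infty$ of infected nodes coincides \as with the reachable set $R(\mathcal{I},A)$ of Definition~\ref{def:reachableSet}. Once this identification is established, the Hazard matrix is read off coefficient-wise from $\Exp{A_{ij}}$ via Definition~\ref{def:hazardMatrix}, and Theorems~\ref{th:mainResult}, \ref{th:uniformResult} and~\ref{th:randomResult} apply directly.

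For $DTIC(\mathcal{P})$, I would set $A_{ij}$ to be the independent Bernoulli of parameter $p_{ij}$ that records, \emph{in advance}, whether node $i$'s unique transmission attempt along $(i,j)$ would succeed. Because each infected node makes at most one attempt per outgoing edge, this prior sampling faithfully reproduces the dynamics: a short induction on $t$ identifies $I_t$ with the set of nodes reachable from $\mathcal{I}$ via an $A$-path of length at most $t$, and letting $t\to\infty$ gives $I_\infty = R(\mathcal{I},A)$. The relation $\Exp{A_{ij}}=p_{ij}$ immediately yields $\HazMat_{ij}=-\ln(1-p_{ij})$.

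For $CTIC(\mathcal{F},\infty)$, I would interpret $f_{ij}$ as the hazard function of an independent transmission time $\tau_{ij}$ along edge $(i,j)$ measured from the instant $i$ becomes infected, and set $A_{ij}=\one\{\tau_{ij}<+\infty\}$, so that the $A_{ij}$ are independent Bernoullis. The classical survival identity $\Prob{\tau_{ij}=+\infty}=\exp\bigl(-\int_0^\infty f_{ij}(t)\,dt\bigr)$ then gives $\Exp{A_{ij}}=1-\exp\bigl(-\int_0^\infty f_{ij}(t)\,dt\bigr)$ and hence $\HazMat_{ij}=\int_0^\infty f_{ij}(t)\,dt$ by Definition~\ref{def:hazardMatrix}.

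The main obstacle is the CTIC identification $I_\infty=R(\mathcal{I},A)$, where one must check that pre-sampling the edge-wise waiting times is consistent with the sequential continuous-time dynamics. Since $T=+\infty$ and no node recovers, a node $j\notin\mathcal{I}$ belongs to $I_\infty$ iff there exists a path $i_0\to i_1\to\dots\to i_k=j$ with $i_0\in\mathcal{I}$ along which every $\tau_{i_\ell i_{\ell+1}}$ is finite; the infection time of $j$ is then the finite sum $\sum_\ell \tau_{i_\ell i_{\ell+1}}$, and because the hazard governing $\tau_{i_\ell i_{\ell+1}}$ depends only on time since $i_\ell$'s infection, the order in which one samples the $\tau_{ij}$ does not affect the resulting limit set. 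This is precisely the condition $j\in R(\mathcal{I},A)$, closing the argument.
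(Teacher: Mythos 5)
Your proof is correct and follows essentially the same route as the paper: pre-draw the independent transmission indicators (DTIC) or transmission times (CTIC), note that independence of the edge variables gives LPC for free, identify $I_\infty$ with the reachable set via paths of successful transmissions, and read off the Hazard matrix from $\Exp{A_{ij}}$ using the survival identity $\Prob{\tau_{ij}=+\infty}=\exp\bigl(-\int_0^\infty f_{ij}(t)\,dt\bigr)$. Your write-up merely makes explicit two verifications the paper leaves implicit (the induction on $t$ for DTIC and the consistency of pre-sampling with the sequential continuous-time dynamics), which is a welcome addition rather than a deviation.
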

\begin{proof}
Since transmission events are independent, we can, prior to the epidemic, draw, respectively, the transmission along each edge $(i,j)\in\OneToN^2$ $T_{ij}\sim\mathcal{B}(p_{ij})$ for $DTIC(\mathcal{P})$, and time to transmit along each edge $\tau_{ij}\sim p_{ij}(t) = f_{ij}(t)e^{-\int_0^t f_{ij}(u)du}$ for $CTIC(\mathcal{F},T)$. Then, a node $i$ belongs to $\mathcal{I}_\infty$ if and only if there is a path between $\mathcal{I}$ and $i$ such that each of its edges transmitted the information. Hence, $\mathcal{I}_\infty$ is the reachable set of $\mathcal{I}$ in the random graph $\RandomG(n,A)$ \st $A_{ij} = T_{ij}$ for $DTIC(\mathcal{P})$, and $A_{ij} = \one\{\tau_{ij} < +\infty\}$ for $CTIC(\mathcal{F},T)$. These are independent Bernoulli random variables of parameter $p_{ij}$ for $DTIC(\mathcal{P})$, and $1 - \exp(-\int_0^\infty f_{ij}(t)dt)$ for $CTIC(\mathcal{F},T)$, which implies that $\RandomG$ is a random graph with LPC and the above mentioned Hazard matrices.
\end{proof}

\section{Conclusion}
In this paper, we established new bounds on the influence in random graphs, and applied our results to three quantities of major importance in their respective fields: the size of the giant component in percolation, the number of infected nodes in epidemiology and the influence of information cascades. These bounds are a strong indication that the \emph{Hazard radius} plays an important role in the dynamics of diffusion processes in random graphs, and lead to several open questions. 
First, one may wonder if the LPC property is a necessary condition for the bounds to hold. For example, relaxing the local correlation and allowing positive correlation on larger neighborhoods may still provide random graphs in which criticality is controlled by the Hazard radius. Second, an important class of diffusion models, based on randomized versions of the \emph{Linear Threshold} model, is so far absent of this analysis, and being able to describe such models by a well chosen LPC random graph may lead to new and valuable results.
Finally, the Hazard radius may drive the behavior of other diffusion-related quantities in random graphs, such as the volume of neighborhoods of fixed size. Such results would prove critical for understanding the temporal dynamics of diffusion processes in networks.

\appendix

\section{Behavior of the Hazard function}\label{appendix:gamma}
When $\rho \geq 0$ and $a > 0$, $\gamma - 1 + \exp\left(-\rho\gamma - a\right) = 0$ always has a solution in $[0,1]$ and $\gamma(\rho, a)$ is well defined. $\gamma$ and $\gamma_0$ are non-decreasing \wrt $\rho$, $\lim_{\rho \rightarrow +\infty} \gamma(\rho,a) = \lim_{\rho \rightarrow +\infty} \gamma_0(\rho) = 1$ and, for $\rho \leq 1$, $\gamma_0(\rho) = 0$.

Moreover, we have the following upper bounds for $\gamma(\rho,a)$, that we will use to determine the subcritical, critical and supercritical behavior of the influence.

\begin{lemma}\label{lem:gammaBounds}
$\forall \rho \neq 1$ and $a > 0$,
\begin{equation}\label{eq:gammaApprox1}
\gamma(\rho,a) \leq \gamma_0(\rho) + \frac{a(1 - \gamma_0(\rho))}{1 - \rho(1-\gamma_0(\rho))},
\end{equation}
and $\forall \rho > 0$ and $a > 0$,
\begin{equation}
\gamma(\rho,a) \leq \gamma_0(\rho) + \sqrt{2a} \min\left\{1, \sqrt{\frac{1}{\rho}}\right\}.
\end{equation}
\end{lemma}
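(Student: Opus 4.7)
The plan is to reduce both inequalities to a single convex-analysis argument on the function $\psi(\gamma) = -\ln(1-\gamma) - \rho\gamma$, for which $\gamma_0(\rho)$ is the largest root and $\gamma(\rho,a)$ is characterized by $\psi(\gamma(\rho,a)) = a$. A direct computation gives $\psi'(\gamma) = \frac{1}{1-\gamma} - \rho$ and $\psi''(\gamma) = \frac{1}{(1-\gamma)^2} > 0$, so $\psi$ is strictly convex on $[0,1)$ and $\psi''$ is non-decreasing. A preliminary step is to check that $\gamma(\rho,a) \geq \gamma_0(\rho)$: for $\rho \leq 1$ this is immediate since $\gamma_0 = 0$; for $\rho > 1$ convexity forces $\psi \leq 0$ on $[0, \gamma_0]$ (the two zeros being $0$ and $\gamma_0$), so $\psi = a > 0$ must occur on $(\gamma_0, 1)$.

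For the first bound, I would apply the tangent inequality for the convex function $\psi$ at $\gamma_0$:
\begin{equation*}
a \;=\; \psi(\gamma(\rho,a)) - \psi(\gamma_0) \;\geq\; \psi'(\gamma_0)\,\bigl(\gamma(\rho,a) - \gamma_0\bigr).
\end{equation*}
Plugging in $\psi'(\gamma_0) = \frac{1 - \rho(1-\gamma_0)}{1-\gamma_0}$ and solving for $\gamma(\rho,a)-\gamma_0$ directly yields \eqref{eq:gammaApprox1}, provided the slope is positive. That positivity is exactly where the hypothesis $\rho \neq 1$ comes in: for $\rho < 1$ one has $\psi'(0) = 1 - \rho > 0$, and for $\rho > 1$ one shows below that $\gamma_0 > 1 - 1/\rho$, i.e.\ $\psi'(\gamma_0) > 0$.

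For the second bound, I would sharpen the expansion using the monotonicity of $\psi''$. Integrating twice on $[\gamma_0, \gamma(\rho,a)]$ gives
\begin{equation*}
a \;\geq\; \psi'(\gamma_0)(\gamma-\gamma_0) + \tfrac{1}{2}\psi''(\gamma_0)(\gamma-\gamma_0)^2 \;\geq\; \frac{(\gamma-\gamma_0)^2}{2(1-\gamma_0)^2},
\end{equation*}
hence $\gamma(\rho,a) - \gamma_0 \leq (1-\gamma_0)\sqrt{2a}$. It then suffices to show $1-\gamma_0 \leq \min\{1, 1/\sqrt{\rho}\}$. The bound $1-\gamma_0 \leq 1$ is trivial. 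For $\rho \geq 1$, I would use that the minimizer of $\psi$ on $[0,1)$ is $\gamma^\star = 1 - 1/\rho$, with $\psi(\gamma^\star) = \ln \rho + 1 - \rho \leq 0$ (by $\ln\rho \leq \rho -1$); convexity of $\psi$ together with $\psi(\gamma_0) = 0$ and $\psi$ increasing past its minimizer then forces $\gamma_0 \geq \gamma^\star$, so $1-\gamma_0 \leq 1/\rho \leq 1/\sqrt{\rho}$.

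The only mildly delicate point is the localization of $\gamma_0$ relative to $\gamma^\star$; once that is in place, both bounds fall out of the same two-term Taylor expansion. I do not expect any serious obstacle.
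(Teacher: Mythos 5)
Your proof is correct. For the first inequality you and the paper do essentially the same thing: the paper writes $1-\gamma_0=e^{-\rho\gamma_0}$ and applies $e^{-x}\geq 1-x$, which is exactly your tangent-line bound for the convex function $\psi(\gamma)=-\ln(1-\gamma)-\rho\gamma$ at $\gamma_0$; both arguments need $1-\rho(1-\gamma_0)>0$, which the paper only justifies implicitly (it proves $\gamma(\rho,a)\geq 1-1/\rho$ in the second half of its proof) whereas you make the localization $\gamma_0>1-1/\rho$ explicit. For the second inequality the routes genuinely diverge: the paper differentiates the implicit equation to get $\partial\gamma/\partial a=\frac{1-\gamma}{1-\rho(1-\gamma)}$, multiplies through and integrates in $a$ to obtain the quadratic inequality $(1-\rho)(\gamma-\gamma_0)+\frac{\rho}{2}(\gamma^2-\gamma_0^2)\leq a$, solves it, and then invokes monotonicity of $\gamma$ in $\rho$ to cover $\rho\leq 1$; you instead use a two-term Taylor lower bound on $\psi$ with the monotone second derivative $\psi''(t)=(1-t)^{-2}$, getting $\gamma-\gamma_0\leq(1-\gamma_0)\sqrt{2a}$ directly and then bounding $1-\gamma_0\leq\min\{1,1/\rho\}$. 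Your version is arguably cleaner (no case split on $\rho\lessgtr 1$, no appeal to monotonicity in $\rho$) and in fact yields the slightly stronger intermediate bound $\gamma-\gamma_0\leq\sqrt{2a}/\rho$ for $\rho\geq 1$, which implies the stated $\sqrt{2a/\rho}$. Both approaches rest on the same two facts — convexity of the defining equation and the localization $\gamma_0\geq 1-1/\rho$ — so the difference is one of packaging rather than substance, but your packaging is the tidier of the two.
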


\Eq{eq:gammaApprox1} is particularly tight, except when $\rho\approx 1$ (\ie the critical case). In order to derive upper bounds in the critical case, we will thus use the second upper bound.

\begin{proof}
By definition of $\gamma(\rho, a)$,
\begin{equation}
\begin{array}{ll}
\gamma(\rho, a) &= 1 - \exp\left(-\rho\gamma_0(\rho) - \rho(\gamma(\rho, a) - \gamma_0(\rho)) - a\right)\\
                &\leq 1 - (1 - \gamma_0(\rho))\left(1 - \rho(\gamma(\rho, a) - \gamma_0(\rho)) - a\right)\\
								&= \gamma_0(\rho) + (1 - \gamma_0(\rho))\left(\rho(\gamma(\rho, a) - \gamma_0(\rho)) + a\right),
\end{array}
\end{equation}
hence
\begin{equation}
\gamma(\rho, a) \leq \gamma_0(\rho) + \frac{a(1 - \gamma_0(\rho))}{1 - \rho(1-\gamma_0(\rho))}.
\end{equation}

For the second inequality, first observe that $\gamma(\rho, a) \geq 1 - \frac{1}{\rho}$ since $\gamma(\rho, a) = 1 - \exp(-\rho\gamma(\rho,a) - a) \geq 1 - \frac{1}{1 + \rho\gamma(\rho,a)} = \frac{\rho\gamma(\rho,a)}{1 + \rho\gamma(\rho,a)}$ which leads to $\rho\gamma(\rho,a) \geq \rho-1$. The second inequality follows from an approximation of the derivative of $\gamma(\rho,a)$ \wrt $a$:
\begin{equation}
\frac{\partial\gamma(\rho,a)}{\partial a} = \frac{1-\gamma(\rho,a)}{1 - \rho(1-\gamma(\rho,a))} \leq \frac{1}{1 - \rho(1-\gamma(\rho,a))}.
\end{equation}
Multiplying the two terms by $1 - \rho(1-\gamma(\rho,a)) > 0$ and integrating between $0$ and $a$, we get
\begin{equation}
(1-\rho)(\gamma(\rho,a) - \gamma_0(\rho)) + \frac{\rho}{2}(\gamma(\rho,a)^2 - \gamma_0(\rho)^2) \leq a
\end{equation}
which leads to
\begin{equation}
\gamma(\rho,a) \leq 1-\frac{1}{\rho} + \sqrt{(\gamma_0(\rho) - 1 + \frac{1}{\rho})^2 + \frac{2a}{\rho}} \leq \gamma_0(\rho) + \sqrt{\frac{2a}{\rho}}.
\end{equation}
using that, $\forall a,b\geq 0$, $\sqrt{a+b}\leq\sqrt{a}+\sqrt{b}$. Finally, noting that $\gamma(\rho,a)$ is non-decreasing, we get that $\forall \rho \leq 1, \gamma(\rho,a) \leq \gamma(1, a) \leq \sqrt{2a}$, and $\forall \rho \geq 1, \sqrt{\frac{2a}{\rho}} \leq \sqrt{2a}$.
\end{proof}

We will also use the follwing bound on $\gamma_0(\rho)$:
\begin{lemma}\label{lem:gammaSlope}
$\forall \rho \geq 1$, $\gamma_0(\rho) \leq 2(\rho - 1)$.
\end{lemma}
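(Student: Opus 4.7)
The plan is to translate the inequality into a statement about the defining fixed-point equation for $\gamma_0$ and then reduce it to an elementary one-variable calculus check.

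First, I would recall from Definition~\ref{def:gamma} that $\gamma(\rho, a)$ solves $\gamma - 1 + \exp(-\rho\gamma - a) = 0$, so passing $a \to 0^+$ gives that $\gamma := \gamma_0(\rho)$ satisfies the implicit relation $\gamma = 1 - e^{-\rho \gamma}$. For $\rho = 1$ we have $\gamma_0(1) = 0$ (as already noted in the paper) and the bound $0 \le 0$ is immediate, so I may assume $\rho > 1$ and hence $\gamma \in (0,1)$. On that range I can invert the fixed-point relation to $\rho = -\ln(1-\gamma)/\gamma$.

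Next, I would rewrite the desired inequality $\gamma \le 2(\rho - 1)$ as $\rho \ge 1 + \gamma/2$, and substitute the expression for $\rho$ above. Multiplying by $\gamma > 0$, the inequality becomes
\begin{equation}
-\ln(1-\gamma) \;\ge\; \gamma + \tfrac{1}{2}\gamma^2,
\end{equation}
i.e.\ $f(\gamma) := \ln(1-\gamma) + \gamma + \tfrac{1}{2}\gamma^2 \le 0$ for all $\gamma \in [0,1)$.

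The final step is a routine check: $f(0) = 0$, and
\begin{equation}
f'(\gamma) = -\frac{1}{1-\gamma} + 1 + \gamma = \frac{-1 + (1-\gamma)(1+\gamma)}{1-\gamma} = -\frac{\gamma^2}{1-\gamma} \le 0
\end{equation}
on $[0,1)$, so $f$ is non-increasing and thus $f(\gamma) \le f(0) = 0$. This establishes the required inequality for every $\rho > 1$, and combined with the trivial case $\rho = 1$ completes the proof. There is no real obstacle here; the only mildly delicate point is remembering to use the $a \to 0^+$ limit of the defining equation to get the clean fixed-point identity $\gamma_0 = 1 - e^{-\rho \gamma_0}$, after which everything reduces to monotonicity of a single explicit function.
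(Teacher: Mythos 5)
Your proof is correct, and it takes a genuinely different route from the paper. The paper's argument first asserts (without detail) that $\gamma_0$ is concave on $(1,+\infty)$, deduces $\gamma_0(\rho)\le\gamma_0'(1^+)(\rho-1)$, and then pins down $\gamma_0'(1^+)=2$ via an implicit-differentiation identity that forces $\gamma_0'(1^+)\in\{0,2\}$ and a separate argument to exclude $0$. You instead invert the fixed-point relation $\gamma_0=1-e^{-\rho\gamma_0}$ to $\rho=-\ln(1-\gamma_0)/\gamma_0$ and reduce the claim to the elementary inequality $-\ln(1-\gamma)\ge\gamma+\tfrac12\gamma^2$ on $[0,1)$, verified by a one-line derivative computation. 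Your version is more self-contained: it avoids the unproved concavity claim and the slightly delicate limit computation of $\gamma_0'(1^+)$, at the cost of not exhibiting the tangent-line interpretation (the paper's approach makes transparent that $2$ is exactly the slope of $\gamma_0$ at $\rho=1^+$, so the constant is optimal). The only point worth making explicit in your write-up is that for $\rho>1$ the limit $\gamma_0(\rho)=\lim_{a\to0^+}\gamma(\rho,a)$ is the \emph{strictly positive} root of $\gamma=1-e^{-\rho\gamma}$ (and lies in $(0,1)$ since $e^{-\rho\gamma}>0$), which is what licenses the inversion; this fact is also used without proof in the paper, so you are no worse off on that score.
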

\begin{proof}
A simple calculation holds that $\gamma_0$ is concave on $(1,+\infty)$. Thus, it implies that, $\forall\rho > 1$,
\begin{equation}
\gamma_0(\rho) \leq \gamma_0'(1^+)(\rho-1).
\end{equation}
Finally, $\forall \epsilon > 0$,
\begin{equation}
\gamma_0'(1+\epsilon) = \frac{\gamma_0(1+\epsilon)(1-\gamma_0(1+\epsilon))}{1 - (1+\epsilon)(1-\gamma_0(1+\epsilon))} = \frac{\gamma_0'(1^+)}{\gamma_0'(1^+)-1} + o(\epsilon),
\end{equation}
which leads to $\gamma_0'(1^+) \in \{0, 2\}$, and $\gamma_0'(1^+) = 0$ is impossible since $\gamma_0$ is concave on $(1,+\infty)$ and $\gamma_0(\rho) > 0$ for all $\rho > 1$. Hence, $\gamma_0'(1^+) = 2$ and $\gamma_0(\rho) \leq 2(\rho - 1)$ for $\rho \geq 1$.
\end{proof}

\section{Proofs of the upper bounds on influence}
In this section, we consider $\RandomG(n,A)$ a random graph with LPC and $R(\mathcal{I},A)$ the reachable set of a set of influencers $\mathcal{I}$ in $\RandomG$. We will also define $X_i = \one\{i\in R(\mathcal{I},A)\}$ the indicators of the reachable set.
First, note that all the bounds provided in \Sec{sec:bounds} are infinite when there exists an edge $(i,j)$ such that $\Exp{A_{ij}} = 1$ (since, in such a case, $\HazSpec = +\infty$). Hence, we will assume that $\forall (i,j)\in\OneToN^2, \Exp{A_{ij}} < 1$. In the following paragraphs, we will prove our results for random graphs having a \emph{strictly positive} measure, \ie such that every graph of $n$ nodes has a non-zero probability. When this assumption is not satisfied, the next two lemmas show that we can still derive the desired results by considering a sequence of such graphs converging to $\RandomG$.

\begin{definition}[\textbf{Perturbed random graph}]\label{def:perturbedGraph}
Let $\RandomG(n, A)$ be a random graph and $ \varepsilon > 0$. The \emph{$ \varepsilon$-perturbed version of $\RandomG$}, $\RandomG^ \varepsilon = (n, A^ \varepsilon)$, is a random graph such that $A^ \varepsilon_{ij} = A_{ij}(1 - X_{ij}) + Y_{ij}X_{ij}$ where $X_{ij}$ and $Y_{ij}$ are, respectively, i.i.d. Bernoulli random variables with parameter $ \varepsilon$ and $1/2$, and independent of $\RandomG$.
\end{definition}

These \emph{noisy} versions of $\RandomG$ have a strictly positive measure, while still verifying the LPC property.

\begin{lemma}\label{lem:lemmaPerturbed1}
Let $ \varepsilon > 0$, $\RandomG(n, A)$ a random graph and $\RandomG^ \varepsilon = (n, A^ \varepsilon)$ its $ \varepsilon$-perturbed version. Then $\RandomG^ \varepsilon$ has a \emph{strictly positive} measure and, if $\RandomG$ is a random graph with LPC, then so does $\RandomG^ \varepsilon$.
\end{lemma}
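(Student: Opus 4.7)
The plan is to verify in turn the three claims: strict positivity of the law of $A^\varepsilon$, property (H1), and property (H2).

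\emph{Strict positivity}: for any target $a\in\{0,1\}^{n^2}$, the event $\{X_{ij}=1,\ Y_{ij}=a_{ij}\text{ for all }(i,j)\}$ forces $A^\varepsilon=a$ and has probability $\varepsilon^{n^2}2^{-n^2}>0$ by mutual independence of the noise variables and their independence from $A$, so $\Prob{A^\varepsilon=a}>0$. \emph{Property (H1)}: for $(k,l)\notin\mathcal{N}_{ij}$, $A^\varepsilon_{ij}$ is a deterministic function of the triple $(A_{ij},X_{ij},Y_{ij})$ and $A^\varepsilon_{kl}$ of $(A_{kl},X_{kl},Y_{kl})$; assumption (H1) for $\RandomG$ gives $A_{ij}\perp A_{kl}$, and combined with the mutual independence of the noise variables, the two triples are independent, so are $A^\varepsilon_{ij}$ and $A^\varepsilon_{kl}$.

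\emph{Property (H2)} is the main work. Conditioning on $X_{ij}$ (which is independent of $A$ and of $A^\varepsilon_{-ij}$), and using that the noise $(X_{-ij},Y_{-ij})$ is independent of $A_{ij}$ so that $A_{ij}\perp A^\varepsilon_{-ij}\mid A_{-ij}$, the tower rule gives
\begin{equation*}
\Exp{A^\varepsilon_{ij}\mid A^\varepsilon_{-ij}=a} \;=\; \tfrac{\varepsilon}{2}+(1-\varepsilon)\,\Exp{g(A_{-ij})\mid A^\varepsilon_{-ij}=a},
\end{equation*}
where $g(b):=\Exp{A_{ij}\mid A_{-ij}=b}$ is non-decreasing by (H2) for $\RandomG$. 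Everything therefore reduces to showing that, for any non-decreasing $g$, the posterior mean $a\mapsto\Exp{g(A_{-ij})\mid A^\varepsilon_{-ij}=a}$ is non-decreasing in $a$.

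I expect this last step to be the main obstacle, and I would resolve it via Holley's inequality. Write the joint law as $\pi(b,a):=\Prob{A_{-ij}=b}\,K(a\mid b)$ with $K(a\mid b)=\prod_{(k,l)\neq(i,j)}K_{kl}(a_{kl}\mid b_{kl})$ the product noise kernel. For $a\leq a'$, Holley's sufficient criterion for the posterior at $a'$ to stochastically dominate that at $a$ reduces, since $a\vee a'=a'$ and $a\wedge a'=a$, to the joint log-supermodularity
\begin{equation*}
\pi(b\vee b',a')\,\pi(b\wedge b',a) \;\geq\; \pi(b,a')\,\pi(b',a)\quad\text{for all }b,b'.
\end{equation*}
The kernel $K$ is MTP2 as a product of scalar kernels satisfying $(1-\varepsilon/2)^2\geq(\varepsilon/2)^2$; and the law of $A_{-ij}$ is MTP2 because (H2) for $\RandomG$ applied edge by edge makes the full law of $A$ log-supermodular (Holley's criterion), and marginalization over $A_{ij}$ preserves MTP2 (Karlin--Rinott). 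Hence $\pi$ is MTP2, Holley's criterion is satisfied, the posterior is stochastically monotone, and integrating the non-decreasing $g$ against these posteriors yields the required conclusion. The one technical subtlety is that Holley's criterion presupposes strict positivity of the measure; when the law of $A$ has zeros, one can either work on the support of $A$, where the conditional probabilities in (H2) are well-defined, or iterate the perturbation construction and pass to the limit.
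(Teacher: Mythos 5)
Your overall skeleton matches the paper's: strict positivity by forcing $X\equiv\one$ (your $\varepsilon^{n^2}2^{-n^2}=(\varepsilon/2)^{n^2}$ is exactly the paper's lower bound), and (H1) by factoring each $A^\varepsilon_{ij}$ through the independent triple $(A_{ij},X_{ij},Y_{ij})$; both of these are correct. Where you part company with the paper is on (H2): the paper disposes of it in half a sentence (``since $X_{ij}$ and $Y_{ij}$ are independent, (H1) and (H2) are still verified''), whereas you correctly observe that conditioning on the \emph{perturbed} configuration $A^\varepsilon_{-ij}=a$ induces a posterior over $A_{-ij}$, so monotonicity of $a\mapsto\Exp{A^\varepsilon_{ij}\mid A^\varepsilon_{-ij}=a}$ requires that posterior to be stochastically monotone in $a$. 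Your decomposition $\Exp{A^\varepsilon_{ij}\mid A^\varepsilon_{-ij}=a}=\varepsilon/2+(1-\varepsilon)\,\Exp{g(A_{-ij})\mid A^\varepsilon_{-ij}=a}$ is correct, and the route you take to finish (the product binary-symmetric kernel is MTP2; an MTP2 prior composed with an MTP2 kernel gives an MTP2 joint; hence stochastically ordered posteriors) is sound \emph{provided} the law of $A_{-ij}$ is MTP2.

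That proviso is where the genuine gap sits, and it is not a removable technicality here. Your derivation of MTP2 for the law of $A$ from (H2) passes through the equivalence between monotonicity and the FKG lattice condition, which holds only for \emph{strictly positive} measures. But the sole purpose of this lemma in the paper is to reduce the general case to the strictly positive one: it is invoked precisely when the law of $A$ has zeros. In that regime your argument is either circular or incomplete: the proposed fix of iterating the perturbation and passing to the limit presupposes the very statement being proved (that a perturbation of a possibly degenerate LPC graph is LPC), and ``work on the support'' is not carried out --- the support need not be a sublattice, monotonicity of the conditional expectations where they are defined does not imply log-supermodularity of the zero-extended measure, and Holley's inequality itself needs a positivity hypothesis on at least one of the two measures being compared. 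So the one case the lemma is designed to handle is the case your proof does not cover. In fairness, the paper's own proof supplies no argument whatsoever for (H2) and is open to the same objection; but since you explicitly identified this step as the crux, the uncovered case must be counted as a gap rather than a stylistic difference, and closing it (for instance by showing directly that any monotone measure on $\{0,1\}^{n^2}$ composed with the BSC kernel yields a monotone measure, without routing through MTP2 of the prior) is the remaining work.
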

\begin{proof}
Let $X$ and $Y$ be the random matrices of \Definition{def:perturbedGraph}. $\forall a\in\{0,1\}^{n^2}$, $\Prob{A^ \varepsilon = a} \geq \Prob{X = \one, Y = a} = (\frac{ \varepsilon}{2})^{n^2} > 0$, where $\one$ is a vector of size $n$ filled with ones. Also, since $X_{ij}$ and $Y_{ij}$ are independent, then (H1) and (H2) of the LPC property are still verified for $\RandomG^ \varepsilon$ (see \Definition{def:LPC}).
\end{proof}

Furthermore, the influence and Hazard radius are continuous \wrt $ \varepsilon$, and thus our results on strictly positive measures can be generalized to any random graph with LPC.

\begin{lemma}\label{lem:lemmaPerturbed2}
Let $\mathcal{I}$ be a set of influencers, $ \varepsilon > 0$, $\RandomG(n, A)$ a random graph and $\RandomG^ \varepsilon = \RandomG(n, A^ \varepsilon)$ its $ \varepsilon$-perturbed version. Let also $\sigma(\mathcal{I})$ be the influence of $\mathcal{I}$ in $\RandomG$, $\sigma^ \varepsilon(\mathcal{I})$ the influence of $\mathcal{I}$ in $\RandomG^ \varepsilon$, $\HazSpec$ the Hazard radius of $\RandomG$ and $\rho^ \varepsilon_\HazMat$ the Hazard radius of $\RandomG^ \varepsilon$. Then the following results hold:
\begin{equation}
\lim_{ \varepsilon \rightarrow 0} \sigma^ \varepsilon(\mathcal{I}) = \sigma(\mathcal{I}),
\end{equation}
and
\begin{equation}
\lim_{ \varepsilon \rightarrow 0} \rho^ \varepsilon_\HazMat = \HazSpec.
\end{equation}
\end{lemma}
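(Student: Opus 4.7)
The plan is to prove the two limits separately; both rely on the explicit coupling in Definition~\ref{def:perturbedGraph} between $A$ and $A^\varepsilon$ together with elementary continuity arguments. I would work on the joint probability space carrying $A$, $X$ and $Y$, so that $\RandomG$ and $\RandomG^\varepsilon$ live on the same space.

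For the influence, introduce the event $E_\varepsilon = \{X_{ij}=0 \text{ for every } (i,j)\in\OneToN^2\}$. On $E_\varepsilon$ one has $A^\varepsilon = A$ entrywise, hence $R(\mathcal{I},A^\varepsilon) = R(\mathcal{I},A)$ pointwise. Since the $X_{ij}$ are independent Bernoulli($\varepsilon$), a union bound gives $\Prob{E_\varepsilon^c} \leq n^2\varepsilon$, and using the crude bound $|R(\mathcal{I},\cdot)| \leq n$ we get
\[
|\sigma^\varepsilon(\mathcal{I}) - \sigma(\mathcal{I})| \leq \Exp{\bigl||R(\mathcal{I},A^\varepsilon)| - |R(\mathcal{I},A)|\bigr|\one\{E_\varepsilon^c\}} \leq n\cdot\Prob{E_\varepsilon^c} \leq n^3\varepsilon,
\]
which tends to $0$ as $\varepsilon\to 0^+$.

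For the Hazard radius, the strategy is to establish entrywise convergence of the Hazard matrix and then invoke continuity of the spectral radius on symmetric matrices. The independence of $X_{ij}$ and $Y_{ij}$ from $A$ yields the direct computation
\[
\Exp{A^\varepsilon_{ij}} = (1-\varepsilon)\Exp{A_{ij}} + \varepsilon/2,
\]
so that $\Exp{A^\varepsilon_{ij}} \to \Exp{A_{ij}}$ as $\varepsilon\to 0^+$; moreover, using the standing assumption $\Exp{A_{ij}} < 1$, we have $\Exp{A^\varepsilon_{ij}} \leq 1-\varepsilon/2 < 1$, so $\HazMat^\varepsilon_{ij} = -\ln(1-\Exp{A^\varepsilon_{ij}})$ is well-defined for every $\varepsilon > 0$. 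By continuity of $x\mapsto -\ln(1-x)$ on $[0,1)$, we obtain $\HazMat^\varepsilon_{ij} \to \HazMat_{ij}$ for every $(i,j)$, and hence $(\HazMat^\varepsilon + (\HazMat^\varepsilon)^\top)/2$ converges entrywise to $(\HazMat+\HazMat^\top)/2$. Since eigenvalues of symmetric matrices are continuous functions of the entries (Weyl's inequality), it follows that $\rho^\varepsilon_\HazMat \to \HazSpec$.

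There is no real obstacle here: both convergences reduce to routine coupling and continuity arguments. The only point requiring care is ensuring that $\HazMat^\varepsilon$ is well-defined uniformly for all sufficiently small $\varepsilon > 0$, which is handled by the explicit upper bound $\Exp{A^\varepsilon_{ij}} \leq 1-\varepsilon/2$ derived above, keeping the logarithm safely away from its singularity throughout the limit.
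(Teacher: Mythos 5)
Your proof is correct and follows essentially the same route as the paper: both arguments hinge on the event that all $X_{ij}$ vanish (so that $A^\varepsilon=A$), whose complement has probability $1-(1-\varepsilon)^{n^2}\leq n^2\varepsilon$, and both handle the Hazard radius via the identity $\Exp{A^\varepsilon_{ij}}=(1-\varepsilon)\Exp{A_{ij}}+\varepsilon/2$ and continuity of the spectral radius. The only cosmetic difference is that you bound $|\sigma^\varepsilon(\mathcal{I})-\sigma(\mathcal{I})|$ directly from the coupling, whereas the paper first deduces convergence in law on the finite space $\{0,1\}^{n^2}$ and then applies it to $f(A)=\card{R(\mathcal{I},A)}$.
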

\begin{proof}
Let $X$ and $Y$ be the random matrices of \Definition{def:perturbedGraph}. $\forall a\in\{0,1\}^{n^2}$, $\Prob{A^ \varepsilon = a} = \Prob{X = \zero, A = a} + \Prob{X \neq \zero, A^ \varepsilon = a}$, where $\zero$ is the vector of size $n$ filled with zeros. Hence,
\begin{equation}
\begin{array}{ll}
|\Prob{A^ \varepsilon = a} - \Prob{A = a}| &= |\Prob{X \neq \zero, A^ \varepsilon = a} - \Prob{X \neq \zero}|\\
&\leq 2\Prob{X \neq \zero}\\
&\leq 2(1 - (1 -  \varepsilon)^{n^2})\\
&\rightarrow_{ \varepsilon \rightarrow 0} 0.
\end{array}
\end{equation}
Since $\{0,1\}^{n^2}$ is finite, $A^ \varepsilon$ converges to $A$ in law, and for any function $f: \{0,1\}^{n^2} \rightarrow \mathbb{R}$ we have:
\begin{equation}
\lim_{ \varepsilon \rightarrow 0} \Exp{f(A^ \varepsilon)} = \Exp{f(A)}.
\end{equation}
Selecting $f(A) = \card{R(\mathcal{I},A)} = \card{\mathcal{I}} + \sum_{i\notin\mathcal{I}} \left(1 - \prod_{q\in\mathcal{Q}_{\mathcal{I},i}}(1 - \prod_{(j,l)\in q}A_{jl})\right)$ (see \Definition{def:reachableSet}) implies that $\lim_{ \varepsilon \rightarrow 0} \sigma^ \varepsilon(\mathcal{I}) = \sigma(\mathcal{I})$. The second result comes from the continuity of the spectral radius and that
\begin{equation}
\HazMat^ \varepsilon_{ij} = -\ln\left(1 - (1- \varepsilon)\Exp{A_{ij}} - \frac{ \varepsilon}{2}\right) \rightarrow_{ \varepsilon \rightarrow 0} \HazMat_{ij}.
\end{equation}
\end{proof}

\subsection{Proofs of \Theorem{th:mainResult} and \Corollary{cor:simpleBounds}}

We develop here the full proofs for \Theorem{th:mainResult} and \Corollary{cor:simpleBounds} that apply to any set of influencers. Due to \Lemma{lem:lemmaPerturbed1} and \Lemma{lem:lemmaPerturbed2}, without loss of generality, we will restrict ourselves to random graphs $\RandomG$ that have a strictly positive measure. We will first need to prove two useful results: \Lemma{lem:poscorrelated}, that proves for $j \in \OneToN$ a positive correlation between the events 'node $i$ is not reachable from $\mathcal{I}$ through node $j$' and \Lemma{lem:mainLemma}, that bounds the probability that a given node is reachable from $\mathcal{I}$.

\begin{lemma}\label{lem:poscorrelated}
$\forall i \notin \mathcal{I}$, $\{1 - X_j A_{ji}\}_{j\in \OneToN}$ are positively correlated.
\end{lemma}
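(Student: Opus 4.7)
The strategy is to reduce the statement to the FKG inequality applied to the joint law $\mu$ of the random adjacency matrix $A$, viewed on the finite lattice $\{0,1\}^{n^2}$.

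\textbf{Step 1 (monotonicity of $Y_j$).} I would first observe that each $Y_j := 1 - X_j A_{ji}$ is a coordinate-wise non-increasing function of $A$. By \Definition{def:reachableSet}, adding an edge can only enlarge $R(\mathcal{I},A)$, so $X_j = \one\{j \in R(\mathcal{I},A)\}$ is non-decreasing in $A$; combined with the trivial monotonicity of $A_{ji}$ in itself, the product $X_j A_{ji}$ is non-decreasing, and hence $Y_j$ is non-increasing.

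\textbf{Step 2 (from (H2) to the FKG lattice condition).} I would next show that $\mu$ satisfies $\mu(a \vee a') \mu(a \wedge a') \geq \mu(a) \mu(a')$ for every $a, a' \in \{0,1\}^{n^2}$. Thanks to \Lemma{lem:lemmaPerturbed1} and \Lemma{lem:lemmaPerturbed2}, I may replace $A$ by its $\varepsilon$-perturbation and send $\varepsilon \to 0$ at the end, so without loss of generality $\mu$ has full support. For a strictly positive measure on $\{0,1\}^N$ it is classical that the lattice condition is equivalent to its two-coordinate restriction: for every pair of coordinates $(i,j),(k,l)$ and every fixed value of the remaining entries, the induced $2\times 2$ marginal is log-supermodular. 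In the Bernoulli case this two-point inequality is exactly the statement that $\Prob{A_{ij} = 1 \mid A_{-ij} = a}$ is non-decreasing in the single entry $a_{kl}$ at fixed values of the other entries, which is precisely assumption (H2) of \Definition{def:LPC}.

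\textbf{Step 3 (conclusion via FKG).} By the FKG inequality, every finite family of coordinate-wise non-increasing functions of $A$ is positively associated; applying this to $\{Y_j\}_{j \in \OneToN}$ yields the claimed positive correlation. In particular one obtains $\Exp{\prod_j Y_j} \geq \prod_j \Exp{Y_j}$, which, together with the identity $1 - X_i = \prod_j (1 - X_j A_{ji})$ valid for $i \notin \mathcal{I}$, is the form in which this lemma will be used in the subsequent results.

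\textbf{Main obstacle.} The only substantive step is Step 2. The strict positivity of $\mu$ (secured by the perturbation lemmas) makes the reduction to the two-coordinate case essentially routine, but writing out the local-to-global lattice argument (an Ahlswede--Daykin style chaining across coordinates) carefully is the only non-trivial bookkeeping. Once the lattice inequality is in hand, Steps 1 and 3 follow directly from standard properties of FKG-associated measures.
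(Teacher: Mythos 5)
Your proof is correct, and its skeleton (monotone decreasing functions of $A$, a positive-dependence property of the law of $A$, then FKG, with the strictly-positive-measure issue handled by the perturbation lemmas) is the same as the paper's. The one genuine difference is your Step 2. The paper does not establish the FKG lattice condition $\mu(a\vee a')\mu(a\wedge a')\geq\mu(a)\mu(a')$ at all: it invokes the Holley--Georgii form of the FKG inequality (Theorem 4.11 of \cite{Georgii99therandom}), whose hypothesis is that $\mu$ is \emph{monotone}, i.e.\ that $\xi\mapsto\ProbUnder{\mu}{X_i\geq a\,|\,X_{\mathcal{L}\setminus\{i\}}=\xi}$ is non-decreasing --- and this is verbatim assumption (H2) of \Definition{def:LPC}, so nothing remains to check. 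Your route instead upgrades (H2) to the lattice condition via the local-to-global reduction (two-coordinate log-supermodularity implies the full lattice condition for strictly positive measures on $\{0,1\}^{n^2}$) and then applies the classical FKG theorem. This is valid --- for strictly positive measures on a product of two-point chains, monotonicity and the lattice condition are indeed equivalent, single-coordinate monotonicity chains up to monotonicity in the partial order, and the Ahlswede--Daykin-style bookkeeping you flag as the main obstacle does go through --- but it is precisely the work that the paper's choice of reference lets you skip. So the step you identify as the only substantive one is, in the paper's formulation, vacuous; if you keep your version you must actually write out the local-to-global lemma or cite it, whereas citing the monotone version of FKG closes the proof immediately. (A minor remark on your Step 3: in \Lemma{lem:mainLemma} the identity actually used is $1-X_i=\prod_{j\neq i}(1-X_j^{-i}A_{ji})$ with $X_j^{-i}$ the reachability indicator in $\RandomG\setminus\{i\}$, not $X_j$ itself; this does not affect the present lemma, whose statement and your proof both concern $\{1-X_jA_{ji}\}_j$.)
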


\begin{proof}
We will make use of a generalization of the FKG inequality due to Holley \cite{holley1974,Georgii99therandom}, that only requires the positive correlation of the edge presence variables $A_{ij}$ (hypothesis (H2) of the LPC property, see \Definition{def:LPC}):
\begin{lemma}[\textbf{FKG inequality (Theorem 4.11 of \cite{Georgii99therandom} adapted to our notations)}]\label{lem:FKG}
Let $\mathcal{L}$ be finite, $S$ a finite subset of $\mathbb{R}$, $\mu$ a strictly positive probability measure on $S^\mathcal{L}$, and $X\in S^\mathcal{L}$ a random variable with probability measure $\mu$. If $\mu$ is \emph{monotone}, \ie $\forall i\in \mathcal{L}$ and $a\in S$, $\xi \mapsto \ProbUnder{\mu}{X_i \geq a | X_{\mathcal{L}\setminus\{i\}} = \xi}$ is non-decreasing \wrt the natural partial order on $S^{\mathcal{L}\setminus\{i\}}$, then it also has \emph{positive correlations}: for any bounded non-decreasing functions $f$ and $g$ on $S^\mathcal{L}$
\begin{equation}
\ExpUnder{\mu}{f(X)g(X)} \geq \ExpUnder{\mu}{f(X)}\ExpUnder{\mu}{g(X)}.
\end{equation}
\end{lemma}
In our setting, $\mathcal{L} = \OneToN^2$, $S = \{0,1\}$, and $\mu$ is the probability measure of the adjacency matrix $A$. For a given set of influencers $\mathcal{I}$, the indicator values of the reachable set $X_i = \one\{i\in R(\mathcal{I},A)\}$ are deterministic functions of the random variables $A_{ij}$. Thus, let $f_{ij}(\{A_{i'j'}\}_{(i',j')}) = 1 - X_jA_{ji}$. In order to apply the FKG inequality, we first need to show that each $f_{ij}: \{0, 1\}^{n^2} \rightarrow \{0, 1\}$ is non-increasing with respect to the natural partial order on $\{0, 1\}^{n^2}$ (\ie $X \leq Y$ if $X_i \leq Y_i$ for all $i$). Let $u\in\{0, 1\}^{n^2}$ be a given state of the edges of the network. In order to prove the non-increasing behavior of $f_{ij}$, it is sufficient to show that $f_{ij}(u)$ is non-increasing with respect to every $u_{(i,j)}$.

But from \Definition{def:reachableSet}, it is obvious that $X_i(u) = 1 - \prod_{q\in\mathcal{Q}_i}(1 - \prod_{(j,l)\in q}u_{(j,l)})$ is non-decreasing with respect to every $u_{(i,j)}$. This implies that $f_{ij}(u) = 1 - X_j(u) u_{(j,i)}$ is non-increasing with respect to every $u_{(i,j)}$ and that $f_{ij}: \{0, 1\}^{n^2} \rightarrow \{0, 1\}$ is non-increasing with respect to the natural partial order on $\{0, 1\}^{n^2}$.

Finally, since the LPC property implies that the probability measure of $A$ is monotonic, we can apply the FKG inequality to $\{1 - X_j A_{ji}\}_{j\in \OneToN}$, and these random variables are positively correlated.
\end{proof}

The next lemma ensures that the variables $X_i$ satisfy an implicit inequation that will be the starting point of the proof of \Theorem{th:mainResult}.

\begin{lemma}\label{lem:mainLemma}

For any $\mathcal{I}$ such that $\card{\mathcal{I}}=n_0 < n$ and for any $i \notin \mathcal{I}$, the probability $\Exp{X_i}$  that node $i$ is reachable from $\mathcal{I}$ in $\RandomG$ verifies:

\begin{equation}
\Exp{X_i} \leq 1-\exp \bigg(- \sum_j \HazMat_{ji} \Exp{X_j} \bigg)
\end{equation}
\end{lemma}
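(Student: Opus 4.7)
The plan is to exploit a ``last-edge'' decomposition of the reachable set. For $i \notin \mathcal{I}$, introduce the auxiliary variable $X_j^{(-i)}$, defined as the indicator that $j$ is reachable from $\mathcal{I}$ by a simple path that does not pass through $i$. Since any simple path from $\mathcal{I}$ to $i$ must end with some edge $(j,i)$ whose prefix reaches $j$ without visiting $i$, one obtains the exact identity
\begin{equation*}
1 - X_i \;=\; \prod_{j \in \OneToN} \bigl(1 - X_j^{(-i)} A_{ji}\bigr).
\end{equation*}
Each factor is a non-increasing function of the edge variables, so the FKG/Holley argument used in the proof of \Lemma{lem:poscorrelated} applies verbatim and yields
\begin{equation*}
\Exp{1 - X_i} \;\geq\; \prod_j \Exp{1 - X_j^{(-i)} A_{ji}}.
\end{equation*}

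The crux is to establish the exact independence of $X_j^{(-i)}$ and $A_{ji}$ for each $j$. By construction, $X_j^{(-i)}$ depends only on edges $(k,l)$ with $k,l \neq i$ (the forbidden vertex is $i$) and moreover with source $k \neq j$ (since $j$ is the terminal vertex of any simple path in $\mathcal{Q}_{\mathcal{I},j}$, and simple paths visit each vertex at most once). Inspecting $\mathcal{N}_{ji} = \{(i,j)\} \cup \{(k,l) : k = j \text{ or } l = i\}$, every edge correlated with $A_{ji}$ has either source $i$, source $j$, or destination $i$, and therefore lies outside the support of $X_j^{(-i)}$. Hypothesis (H1) of the LPC property then forces $X_j^{(-i)}$ and $A_{ji}$ to be independent, so that $\Exp{X_j^{(-i)} A_{ji}} = \Exp{X_j^{(-i)}} p_{ji}$ with $p_{ji} = \Exp{A_{ji}}$.

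It then remains to combine three elementary ingredients. Bernoulli's inequality $(1-p)^q \leq 1 - pq$ for $p,q \in [0,1]$ gives $1 - \Exp{X_j^{(-i)}} p_{ji} \geq (1 - p_{ji})^{\Exp{X_j^{(-i)}}}$. The monotonicity $X_j^{(-i)} \leq X_j$, together with $1 - p_{ji} \in [0,1]$, allows the exponent to be replaced by $\Exp{X_j}$ at the cost of a further valid lower bound. Finally, taking products and recognising $\prod_j (1 - p_{ji})^{\Exp{X_j}} = \exp\bigl(-\sum_j \HazMat_{ji}\Exp{X_j}\bigr)$ closes the chain and delivers the claimed bound on $\Exp{X_i}$. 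The only non-routine step is the independence argument, which relies on the combinatorial observation that the ``cavity'' indicator $X_j^{(-i)}$ is supported on edges entirely disjoint from the LPC neighborhood $\mathcal{N}_{ji}$; everything else is FKG plus Bernoulli.
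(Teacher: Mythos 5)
Your proposal is correct and follows essentially the same route as the paper's proof: the same cavity decomposition $1-X_i=\prod_j\bigl(1-X_j^{-i}A_{ji}\bigr)$ with $X_j^{-i}$ the indicator of reachability avoiding $i$, the same FKG/Holley step, the same appeal to (H1) for the independence of $X_j^{-i}$ and $A_{ji}$, the monotonicity $X_j^{-i}\leq X_j$, and the inequality $1-pq\geq(1-p)^q$ (stated in the paper as $\ln(1-ax)\geq x\ln(1-a)$). The only differences are cosmetic, namely the order in which the Bernoulli and monotonicity steps are applied.
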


\begin{proof}
We first note that a node $i\notin \mathcal{I}$ is reachable from $\mathcal{I}$ if and only if one of its neighbors is reachable from $\mathcal{I}$ in the graph $\RandomG\setminus\{i\}$, and the respective ingoing edge transmitted the contagion. Let $X_j^{-i}$ be a binary value indicating if $j$ is reachable from $\mathcal{I}$ in $\RandomG\setminus\{i\}$. Then
\begin{equation}
X_i = 0 \Leftrightarrow \forall j\in\OneToN\setminus\{i\}, X_j^{-i} = 0 \mbox{ or } A_{ji} = 0,
\end{equation}
which implies the following alternative expression for $X_i$:
\begin{equation}
1 - X_i = \prod_{j\neq i} (1 - X_j^{-i} A_{ji}).
\end{equation}

Moreover, the positive correlation of $\{1 - X_j^{-i} A_{ji}\}_{j\in \OneToN\setminus\{i\}}$ implies that
\begin{equation}
\Exp{\prod_{j\neq i} (1 - X_j^{-i} A_{ji})} \geq \prod_{j\neq i} \Exp{1 - X_j^{-i} A_{ji}}
\end{equation}
which leads to
\begin{equation}
\begin{array}{ll}
\Exp{X_i} &\leq 1 - \prod_{j\neq i} \Exp{1 - X_j^{-i} A_{ji}}\\
               &= 1 - \prod_{j\neq i} \left(1 - \Exp{X_j^{-i}}\Exp{A_{ji}}\right)\\
							 &\leq 1 - \prod_j \left(1 - \Exp{X_j}\Exp{A_{ji}}\right)\\
\end{array}
\end{equation}
since $X_j^{-i}$ and $A_{ji}$ are independent, due to hypothesis (H1) of the LPC property (see \Definition{def:LPC}) and $X_j^{-i}$ only depends on $(A_{kl})_{(k,l)\notin\mathcal{N}_{ji}}$.The second inequality comes from the fact that $X_j^{-i} \leq X_j \as$. Finally,
\begin{equation}
\begin{array}{ll}
\Exp{X_i} &\leq 1 - \exp \left(\sum_j \ln(1 - \Exp{X_j}\Exp{A_{ji}})\right)\\
               &\leq 1 - \exp \left(\sum_j \ln(1 - \Exp{A_{ji}})\Exp{X_j}\right)\\
               &= 1 - \exp \left(-\sum_j \HazMat_{ji} \Exp{X_j}\right)\\
\end{array}
\end{equation}
since we have on the one hand, for any $x \in [0,1]$ and $a < 1$, $\ln(1 - ax) \geq \ln(1 - a) x$, and on the other hand $\Exp{A_{ji}} = 1 - \exp(-\HazMat_{ji})$ by definition of $\HazMat$.
\end{proof}

Using \Lemma{lem:mainLemma}, we are now ready to start the proof of \Theorem{th:mainResult}.

\begin{proof}[Proof of \Theorem{th:mainResult}]

In order to simplify notations, we define $Z_i=\big(\Exp{X_i})_i$ that we collect in the vector $Z=(Z_i)_{i \in [1...n]}$. Using \Lemma{lem:mainLemma} and convexity of the exponential function, we have for any $u \in \mathbb{R}^n$ such that $\forall i\in \mathcal{I}, u_i = 0$ and $\forall i\notin \mathcal{I}, u_i \geq 0$,
\begin{equation}
\displaystyle u^\top Z \leq |u|_1 \bigg (1-\sum_{i=1}^{n-1} \frac {u_i}{|u|_1} \exp(-(\HazMat^\top Z)_i) \bigg)
         \leq |u|_1 \bigg (1- \exp \big(-\frac{Z^\top \HazMat u}{|u|_1} \big) \bigg)
\end{equation}
where $|u|_1 = \sum_i |u_i|$ is the $L_1$-norm of $u$.

Now taking $u=(1_{i\notin \mathcal{I}} Z_i)_i$ and noting that 	$\forall i, u_i \leq Z_i$, we have
\begin{equation}
\begin{array}{ll}
\displaystyle \frac{Z^\top Z-n_0}{|Z|_1-n_0}  \leq 1- \exp \bigg(-\frac{Z^\top \HazMat Z}{|Z|_1-n_0} \bigg)
 \leq 1- \exp \bigg(-\frac{\HazSpec (Z^\top Z-n_0)}{|Z|_1-n_0}-\frac{\HazSpec n_0}{|Z|_1-n_0} \bigg)
\end{array}
\end{equation}
where $\HazSpec=\rho(\frac{\HazMat+\HazMat^\top}{2})$. Defining $y=\frac{Z^\top Z-n_0}{|Z|_1-n_0}$ and $z=|Z|_1-n_0=\sigma(\mathcal{I})-n_0$, the aforementioned inequation rewrites
\begin{eqnarray}
y \leq 1- \exp \bigg(-\HazSpec y -\frac{\HazSpec n_0}{z} \bigg)
\end{eqnarray}
But by Cauchy-Schwarz inequality applied to $u$, $(n-n_0) (Z^\top Z-n_0) \geq (|Z|_1-n_0)^2$, which means that $z \leq y(n-n_0)$. We now consider the equation
\begin{eqnarray}\label{eqn:equationy}
x-1+\exp \bigg (-\HazSpec x-\frac {\HazSpec n_0}{x(n-n_0)}\bigg) =0
\end{eqnarray}
Because the function $f:x \rightarrow x-1+\exp \big (-\HazSpec x+\frac {\HazSpec n_0}{x(n-n_0)}\big)$ is continuous, verifies $f(1) > 0$ and $\lim_{x \rightarrow 0^+} f(x) = -1$, \Eq{eqn:equationy} admits a solution $\gamma_1$ in $]0,1[$.

We then prove by contradiction that $z \leq \gamma_1(n-n_0)$. Let us assume $z > \gamma_1(n-n_0)$. Then $y \leq 1- \exp \big(-\HazSpec y -\frac{\HazSpec n_0}{\gamma_1(n-n_0)} \big)$. But the function $h:x \rightarrow x-1+\exp \big (-\HazSpec x+\frac {\HazSpec n_0}{\gamma_1(n-n_0)}\big)$ is convex and verifies $h(0) < 0$ and $h(\gamma_1)=0$. Therefore, for any $y > \gamma_1$, $0=f(\gamma_1) \leq \frac{\gamma_1}{y}f(y)+(1-\frac{\gamma_1}{y})f(0)$, and therefore $f(y) > 0$. Thus, $y \leq \gamma_1$. But $z \leq y (n-n_0) \leq \gamma_1(n-n_0)$ which yields the contradiction.
\end{proof}

\begin{proof}[Proof of \Corollary{cor:simpleBounds}]
Using \Lemma{lem:gammaBounds} and observing that:
\[
\gamma_1 = \gamma\left(\HazSpec, \frac{\HazSpec n_0}{\gamma_1(n-n_0)}\right) \leq \gamma\left(\HazSpec, \frac{\HazSpec n_0}{(\gamma_1-\gamma_0(\HazSpec))(n-n_0)}\right)~,
 \]
we obtain the following bounds:
\begin{equation}
\gamma_1 \leq \gamma_0(\HazSpec) + \frac{\HazSpec n_0 (1 - \gamma_0(\HazSpec))}{(\gamma_1-\gamma_0(\HazSpec))(n-n_0)(1 - \HazSpec(1-\gamma_0(\HazSpec)))},
\end{equation}
and
\begin{equation}
\gamma_1 \leq \gamma_0(\HazSpec) + \sqrt{\frac{2n_0}{(\gamma_1-\gamma_0(\HazSpec))(n-n_0)}},
\end{equation}
which lead to
\begin{equation}\label{eq:subsupBound}
\gamma_1 \leq \gamma_0(\HazSpec) + \sqrt{\frac{\HazSpec (1 - \gamma_0(\HazSpec))}{1 - \HazSpec(1-\gamma_0(\HazSpec))}}\sqrt{\frac{n_0}{n-n_0}},
\end{equation}
and
\begin{equation}\label{eq:criticalBound}
\gamma_1 \leq \gamma_0(\HazSpec) + \left(\frac{2n_0}{n-n_0}\right)^{1/3}.
\end{equation}
The subcritical and supercritical regimes are obtained using \Eq{eq:subsupBound} (recall that $\gamma_0(\rho) = 0$ when $\rho \leq 0$) and the critical regime using \Eq{eq:criticalBound} and \Lemma{lem:gammaSlope}.
\end{proof}

\subsection{Proofs of \Theorem{th:uniformResult} and \Corollary{cor:uniformsimpleBounds}}

In this subsection, we develop the proofs for \Theorem{th:uniformResult} and \Corollary{cor:uniformsimpleBounds} in the case when the set of influencers $\mathcal{I}$ is drawn from a uniform distribution over $\mathcal{P}_{n_0}(\OneToN)$.

We start with an important lemma that will play the same role in the proof of \Theorem{th:uniformResult} than \Lemma{lem:mainLemma} in the proof of \Theorem{th:mainResult}.

\begin{lemma}\label{lem:mainLemmaUniform}

Assume $\mathcal{I}$ is drawn from a uniform distribution over $\mathcal{P}_{n_0}(\OneToN)$. Then, for any $i \in \OneToN$, the probability $\Exp{X_i}$  that node $i$ is reachable from $\mathcal{I}$ in $\RandomG$ satisfies the following implicit inequation:

\begin{equation}
\Exp{X_i} \leq 1 - \frac{n-n_0}{n} \exp \bigg(- \frac{n}{n-n_0} \sum_j \HazMat_{ji} \Exp{X_j} \bigg)
\end{equation}
\end{lemma}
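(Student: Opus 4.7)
The plan is to reduce this uniform-influencer setting to the deterministic-influencer bound of \Lemma{lem:mainLemma} by conditioning on whether $i \in \mathcal{I}$ and then averaging over $\mathcal{I}$. Since $X_i = 1$ whenever $i \in \mathcal{I}$, and by uniformity $\Prob{i \in \mathcal{I}} = n_0/n$, I would begin with the decomposition
\begin{equation*}
\Exp{X_i} = \frac{n_0}{n} + \frac{n - n_0}{n}\, \Exp{X_i \mid i \notin \mathcal{I}},
\end{equation*}
so the task reduces to upper bounding $\Exp{X_i \mid i \notin \mathcal{I}}$.

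Conditional on any realization of $\mathcal{I}$ with $i \notin \mathcal{I}$, \Lemma{lem:mainLemma} applies verbatim (its proof uses only LPC, not determinism of $\mathcal{I}$), giving
\begin{equation*}
\Exp{X_i \mid \mathcal{I}} \leq 1 - \exp\left(-\sum_j \HazMat_{ji} \Exp{X_j \mid \mathcal{I}}\right).
\end{equation*}
Averaging over $\mathcal{I}$ conditioned on $i \notin \mathcal{I}$ and using Jensen's inequality for the concave function $x \mapsto 1 - e^{-x}$, I would obtain
\begin{equation*}
\Exp{X_i \mid i \notin \mathcal{I}} \leq 1 - \exp\left(-\sum_j \HazMat_{ji}\, \Exp{X_j \mid i \notin \mathcal{I}}\right).
\end{equation*}
To pass from conditional to unconditional expectations, the inequality $\Exp{X_j} \geq \Prob{i \notin \mathcal{I}} \Exp{X_j \mid i \notin \mathcal{I}} = \tfrac{n-n_0}{n}\Exp{X_j \mid i \notin \mathcal{I}}$ immediately yields $\Exp{X_j \mid i \notin \mathcal{I}} \leq \tfrac{n}{n-n_0}\Exp{X_j}$. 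Substituting this (using $\HazMat_{ji} \geq 0$ and monotonicity of $-e^{-\cdot}$) and recombining with the initial decomposition produces exactly the claimed bound.

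The main subtlety is the Jensen step, which requires the argument $\sum_j \HazMat_{ji}\Exp{X_j \mid \mathcal{I}}$ to be nonnegative so that the concavity of $1 - e^{-x}$ applies in the relevant region; this is automatic since $\HazMat_{ji} = -\ln(1 - \Exp{A_{ji}}) \geq 0$ under the standing assumption $\Exp{A_{ji}} < 1$. Everything else is elementary conditional-probability bookkeeping, and the factor $n/(n-n_0)$ inside the exponential in the target inequality emerges naturally as the price paid for integrating out the event $\{i \notin \mathcal{I}\}$.
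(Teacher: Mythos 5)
Your proof is correct and follows essentially the same route as the paper's: the decomposition on the event $\{i\in\mathcal{I}\}$, the conditional application of \Lemma{lem:mainLemma}, Jensen's inequality (your concavity of $x\mapsto 1-e^{-x}$ is the paper's convexity of $\exp$), and the bound $\Exp{X_j\mid i\notin\mathcal{I}}\leq \tfrac{n}{n-n_0}\Exp{X_j}$ are all exactly the steps used there. The only cosmetic difference is your caveat about nonnegativity of the argument in the Jensen step, which is unnecessary since $1-e^{-x}$ is concave on all of $\mathbb{R}$.
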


\begin{proof}
\begin{equation}
\begin{array}{ll}
\Exp{X_i} &\displaystyle =\Exp{1_{\{i \in \mathcal{I}\}}}+\Exp{1_{\{i \notin \mathcal{I}\}}}\Exp{\Exp{X_i|\mathcal{I}}|i \notin \mathcal{I}} \\
            &\displaystyle \leq \frac{n_0}{n} + \frac{n-n_0}{n} \bigg(1 - \Exp{\exp \left(-\sum_j \HazMat_{ji} \Exp{X_j|\mathcal{I}} \right)|i \notin \mathcal{I}} \bigg)\\
            &\displaystyle \leq \frac{n_0}{n} + \frac{n-n_0}{n} \bigg(1 - \exp \left(-\Exp{\sum_j \HazMat_{ji} \Exp{X_j|\mathcal{I}} |i \notin \mathcal{I}}\right) \bigg)\\
            &\displaystyle = 1 - \frac{n-n_0}{n} \exp \left(-\sum_j \HazMat_{ji} \Exp{X_j|i \notin \mathcal{I}} \right)\\
            &\displaystyle \leq 1 - \frac{n-n_0}{n} \exp \left(-\frac{n}{n-n_0} \sum_j \HazMat_{ji} \Exp{X_j} \right)
\end{array}
\end{equation}
where the first inequality is \Lemma{lem:mainLemma} and the second one is Jensen inequality for conditional expectations.
\end{proof}
\begin{proof}[Proof of \Theorem{th:uniformResult}]
We define $Z_i=\big(\Exp{X_i})_i$ that we collect in the vector $Z=(Z_i)_{i \in [1...n]}$. Then, using \Lemma{lem:mainLemmaUniform}, and convexity of exponential function, we have:
\begin{equation}
\begin{array}{ll}
\displaystyle \frac{Z^\top Z}{|Z|_1} & \displaystyle \leq 1- \frac{n-n_0}{n} \sum_{i=1}^n \frac {Z_i}{|Z|_1} \exp \big(-\frac{n}{n-n_0} (\HazMat^\top Z)_i\big)\\
                       &\displaystyle\leq 1- \frac{n-n_0}{n} \exp \big(-\frac{n}{n-n_0} \frac{Z^\top \HazMat Z}{|Z|_1} \big)\\
											 & \displaystyle \leq 1- \frac{n-n_0}{n} \exp \big(- \frac{n\HazSpec}{n-n_0}\frac{Z^\top Z}{|Z|_1} \big),
\end{array}
\end{equation}
which leads, due to the monotonicity of $x \mapsto 1-\frac{n-n_0}{n}\exp(-\frac{n\HazSpec}{n-n_0} x)$, to
\begin{equation}
\frac{Z^\top Z}{|Z|_1} \leq \gamma\left(\frac{n\HazSpec}{n-n_0}, -\ln(1 - \frac{n_0}{n})\right) = (1 - \frac{n_0}{n})\gamma\left(\HazSpec, \frac{\HazSpec n_0}{n-n_0}\right) + \frac{n_0}{n}.
\end{equation}
Finally, we have by Cauchy-Schwarz inequality $\sigma_U = |Z|_1 \leq n \frac{Z^\top Z}{|Z|_1}$, which proves the proposition.
\end{proof}

\begin{proof}[Proof of \Corollary{cor:uniformsimpleBounds}]
Using \Lemma{lem:gammaBounds}, we obtain the following bounds:
\begin{equation}\label{eq:subsupBoundUniform}
\gamma\left(\HazSpec, \frac{\HazSpec n_0}{n-n_0}\right) \leq \gamma_0(\HazSpec) + \frac{\HazSpec n_0 (1 - \gamma_0(\HazSpec))}{(n-n_0)(1 - \HazSpec(1-\gamma_0(\HazSpec)))},
\end{equation}
and
\begin{equation}\label{eq:criticalBoundUniform}
\gamma(\HazSpec, \frac{\HazSpec n_0}{n-n_0}) \leq \gamma_0(\HazSpec) + \sqrt{\frac{2n_0}{n-n_0}},
\end{equation}
The subcritical and supercritical regimes are obtained using \Eq{eq:subsupBoundUniform} (recall that $\gamma_0(\rho) = 0$ when $\rho \leq 0$) and the critical regime using \Eq{eq:criticalBoundUniform} and \Lemma{lem:gammaSlope}.
\end{proof}

\subsection{Proofs of \Theorem{th:randomResult} and \Corollary{cor:randomsimpleBounds}}

In this subsection, we develop the proofs for \Theorem{th:randomResult} and \Corollary{cor:randomsimpleBounds} in the case when each node belongs to the set of influencers $\mathcal{I}$ independently at random with probability $q$.

We start with an important lemma that will play the same role in the proof of \Theorem{th:randomResult} than \Lemma{lem:mainLemma} in the proof of \Theorem{th:mainResult}.

\begin{lemma}\label{lem:mainLemmaRandom}

Assume each node is an influencer with independent probability $q\in[0,1]$ and denote by $\mathcal{I}$ the random set of influencers that is drawn. Then, for any $i \in \OneToN$, the probability $\Exp{X_i}$  that node $i$ is reachable from $\mathcal{I}$ in $\RandomG$ satisfies the following implicit inequation:

\begin{equation}
\Exp{X_i} \leq 1 - (1-q) \exp \bigg(- \sum_j \HazMat_{ji} \Exp{X_j} \bigg)
\end{equation}
\end{lemma}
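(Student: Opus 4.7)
The plan is to transfer Lemma \ref{lem:mainLemma} from a deterministic influencer set to the randomized setting by conditioning on $\mathcal{I}$, exactly as in the proof of Lemma \ref{lem:mainLemmaUniform}. Concretely, I would start from
\[
\Exp{X_i} = q + (1-q)\,\Exp{X_i \mid i \notin \mathcal{I}},
\]
apply Lemma \ref{lem:mainLemma} conditional on $\mathcal{I}$ (valid whenever $i \notin \mathcal{I}$) to obtain $\Exp{X_i \mid \mathcal{I}} \leq 1 - \exp(-\sum_j \HazMat_{ji}\Exp{X_j \mid \mathcal{I}})$, and then average over $\mathcal{I}$ given $i \notin \mathcal{I}$, using Jensen's inequality for the convex map $x \mapsto e^{-x}$ to pass the expectation inside the exponential. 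This yields
\[
\Exp{X_i \mid i \notin \mathcal{I}} \leq 1 - \exp\!\left(-\sum_j \HazMat_{ji}\,\Exp{X_j \mid i \notin \mathcal{I}}\right).
\]

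The only substantive difference with the uniform case arises in the last step, namely replacing $\Exp{X_j \mid i \notin \mathcal{I}}$ by $\Exp{X_j}$. In the uniform setting the authors must pay a factor $n/(n-n_0)$ because conditioning on $i \notin \mathcal{I}$ distorts the marginal of the other influencer indicators. In the independent Bernoulli model, however, the indicators $B_k = \one\{k \in \mathcal{I}\}$ are mutually independent and independent of the random graph $A$, and $X_j$ is a coordinatewise non-decreasing function of $B$ (adding influencers can only enlarge the reachable set). Conditioning on $B_i = 0$ therefore cannot increase $\Exp{X_j}$: the tower identity $\Exp{X_j \mid A} = q\,\Exp{X_j \mid A, B_i = 1} + (1-q)\,\Exp{X_j \mid A, B_i = 0}$ together with monotonicity in $B_i$ gives $\Exp{X_j \mid A, B_i = 0} \leq \Exp{X_j \mid A}$, and integrating over $A$ preserves the inequality, so $\Exp{X_j \mid i \notin \mathcal{I}} \leq \Exp{X_j}$.

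Combining these two displays, and using $\HazMat_{ji} \geq 0$ so that the monotonicity bound propagates through the exponential, gives
\[
\Exp{X_i} \leq q + (1-q)\left(1 - \exp\!\left(-\sum_j \HazMat_{ji}\Exp{X_j}\right)\right) = 1 - (1-q)\exp\!\left(-\sum_j \HazMat_{ji}\Exp{X_j}\right),
\]
which is the claim. There is no real obstacle: the independent Bernoulli model is friendlier than the uniform one, so the loose factor $n/(n-n_0)$ of Lemma \ref{lem:mainLemmaUniform} collapses to the sharp factor $1$, consistent with the constant $(1-q)$ in front of the exponential in the statement.
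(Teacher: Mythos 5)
Your proposal is correct and follows essentially the same route as the paper: the same decomposition $\Exp{X_i} = q + (1-q)\Exp{X_i \mid i\notin\mathcal{I}}$, the conditional application of Lemma~\ref{lem:mainLemma}, Jensen's inequality to move the expectation inside the exponential, and finally the replacement of $\Exp{X_j \mid i\notin\mathcal{I}}$ by $\Exp{X_j}$. The paper justifies that last step by simply invoking the positive correlation of $X_j$ and $\one\{i\in\mathcal{I}\}$; your monotonicity-plus-independence argument is exactly the reason that correlation is positive, so you have merely made explicit what the paper leaves implicit.
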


\begin{proof}
\begin{equation}
\begin{array}{ll}
\Exp{X_i} &\displaystyle =\Exp{1_{\{i \in \mathcal{I}\}}}+\Exp{1_{\{i \notin \mathcal{I}\}}}\Exp{\Exp{X_i|\mathcal{I}}|i \notin \mathcal{I}} \\
            &\displaystyle \leq q + (1-q) \bigg(1 - \Exp{\exp \left(-\sum_j \HazMat_{ji} \Exp{X_j|\mathcal{I}} \right)|i \notin \mathcal{I}} \bigg)\\
            &\displaystyle \leq q + (1-q) \bigg(1 - \exp \left(-\Exp{\sum_j \HazMat_{ji} \Exp{X_j|\mathcal{I}} |i \notin \mathcal{I}}\right) \bigg)\\
            & \displaystyle  = 1 - (1-q) \exp \left(-\sum_j \HazMat_{ji} \Exp{X_j|i \notin \mathcal{I}} \right)\\
            & \displaystyle  \leq 1 - (1-q) \exp \left(- \sum_j \HazMat_{ji} \Exp{X_j} \right)
\end{array}
\end{equation}
where the first inequality is \Lemma{lem:mainLemma}, the second one is Jensen's inequality for conditional expectations, and the third is the positive correlation of $X_j$ and $\one\{i\in\mathcal{I}\}$.
\end{proof}
\begin{proof}[Proof of \Theorem{th:randomResult}]
We define $Z_i=\big(\Exp{X_i})_i$ that we collect in the vector $Z=(Z_i)_{i \in [1...n]}$. Then, using \Lemma{lem:mainLemmaRandom}, and convexity of exponential function, we have:
\begin{equation}
\begin{array}{ll}
\displaystyle \frac{Z^\top Z}{|Z|_1} &\displaystyle \leq 1- (1-q) \sum_{i=1}^n \frac {Z_i}{|Z|_1} \exp \big(- (\HazMat^\top Z)_i\big)\\
                       & \displaystyle \leq 1- (1-q) \exp \big(- \frac{Z^\top \HazMat Z}{|Z|_1} \big)\\
											 & \displaystyle \leq 1- (1-q) \exp \big(- \HazSpec\frac{Z^\top Z}{|Z|_1} \big),
\end{array}
\end{equation}
which leads, due to the monotonicity of $x \mapsto 1-(1-q)\exp(-\HazSpec x)$, to
\begin{equation}
\frac{Z^\top Z}{|Z|_1} \leq \gamma(\HazSpec, -\ln(1-q)).
\end{equation}
Finally, we have by Cauchy-Schwarz inequality $\sigma_R = |Z|_1 \leq n \frac{Z^\top Z}{|Z|_1}$, which proves the proposition.
\end{proof}
\begin{proof}[Proof of \Corollary{cor:uniformsimpleBounds}]
Using \Lemma{lem:gammaBounds}, we obtain the following bounds:
\begin{equation}\label{eq:subsupBoundRandom}
\gamma(\HazSpec, -\ln(1-q)) \leq \gamma_0(\HazSpec) + \frac{-\ln(1-q) (1 - \gamma_0(\HazSpec))}{1 - \HazSpec(1-\gamma_0(\HazSpec))},
\end{equation}
and
\begin{equation}\label{eq:criticalBoundRandom}
\gamma(\HazSpec, -\ln(1-q)) \leq \gamma_0(\HazSpec) + \sqrt{-2\ln(1-q)},
\end{equation}
The subcritical and supercritical regimes are obtained using \Eq{eq:subsupBoundRandom} (recall that $\gamma_0(\rho) = 0$ when $\rho \leq 0$) and the critical regime using \Eq{eq:criticalBoundRandom} and \Lemma{lem:gammaSlope}.
\end{proof}

\subsection{Proof of \Proposition{prop:tightness}}

\begin{proof}[Proof of \Proposition{prop:tightness}]
Let $\RandomG_{a,b} = \RandomG(n,A)$ be a ``\emph{random star-network}'', \ie an undirected random graph such that $\{A_{ij} : i<j\}$ are independent Bernoulli random variables of parameter $a\in[0,1]$ if $i = 1$ and $b<a$ otherwise. Then, the next Lemma shows that the influence of node $1$ in $\RandomG_{a,b}$ is lower bounded by the size of the giant component of an \Erdos graph.
\begin{lemma}
Let $\RandomG_{a,b} = \RandomG(n,A)$ be a ``\emph{random star-network}'' of parameters $a$ and $b$, and $\RandomG(n,p)$ an \Erdos graph of size $n$ and parameter $p$. The influence $\sigma_{a,b}(\{1\})$ of node $1$ in $\RandomG_{a,b}$ is lower bounded by
\begin{equation}
\sigma_{a,b}(\{1\}) \geq 1 - \frac{1}{ae} + na + (1-a)\Exp{C_1(\RandomG(n-1,b))},
\end{equation}
where $C_1(\RandomG)$ denotes the size of the giant component of $\RandomG$.
\end{lemma}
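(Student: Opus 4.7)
The plan is to condition on the Erd\"os-R\'enyi sub-graph $G_{ER} = \RandomG(n-1,b)$ obtained by deleting node $1$ from $\RandomG_{a,b}$, and average over the remaining randomness coming from the edges incident to node $1$. Let $C_1, C_2, \dots$ denote the connected components of $G_{ER}$, ordered by decreasing size, and let $S = \{i \geq 2 : A_{1i} = 1\}$ be the random set of direct neighbors of node $1$; since the edges $A_{1i}$ are independent of the edges within $\{2,\dots,n\}$, $S$ is independent of $G_{ER}$ and conditionally on $G_{ER}$ each index $i$ belongs to $S$ independently with probability $a$. Because $R(\{1\},A)$ is exactly $\{1\}$ together with the union of those components $C_k$ that meet $S$, one obtains the identity $|R(\{1\},A)| = 1 + \sum_k |C_k|\,\one\{C_k \cap S \neq \emptyset\}$. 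From this I would extract the pointwise lower bound
\[
|R(\{1\}, A)| \geq 1 + |S \setminus C_1| + |C_1|\,\one\{C_1 \cap S \neq \emptyset\},
\]
which retains only the contribution of $C_1$ (activated in block as soon as $S$ meets it) and that of the direct neighbors lying outside $C_1$ (always reachable).

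The second step is to take expectations. The conditional independence of $S$ and $G_{ER}$ gives $\Exp{|S|} = (n-1)a$, $\Exp{|S \cap C_1| \mid G_{ER}} = a|C_1|$, and $\Prob{C_1 \cap S \neq \emptyset \mid G_{ER}} = 1 - (1-a)^{|C_1|}$. Combining these three identities with the lower bound above and expanding, with the shorthand $C_1 = C_1(\RandomG(n-1,b))$, yields
\[
\sigma_{a,b}(\{1\}) \geq 1 + (n-1)a + (1-a)\Exp{|C_1|} - \Exp{|C_1|(1-a)^{|C_1|}}.
\]

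The last step, which is where the $1/(ae)$ term originates, is the elementary pointwise inequality $|C_1|(1-a)^{|C_1|} \leq 1/(ae)$ almost surely, obtained by combining $(1-a)^x \leq e^{-ax}$ for $x \geq 0$ with $\sup_{x \geq 0} x\,e^{-ax} = 1/(ae)$, attained at $x = 1/a$. Substituting this into the previous display and absorbing the additive constant gives the claimed bound. The main (though elementary) obstacle is really the first step: one has to discard the contributions from components $C_k$ with $k \geq 2$ in a way that preserves a lower bound still exposing $\Exp{|C_1|}$ linearly, since it is this linear coupling that allows the influence in the star-perturbed graph $\RandomG_{a,b}$ to be controlled by the size of the giant component of a pure \Erdos graph, as needed downstream for \Proposition{prop:tightness}.
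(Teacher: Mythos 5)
Your proof follows essentially the same route as the paper's: condition on the Erd\"os-R\'enyi subgraph induced on $\{2,\dots,n\}$, view the neighbours of node $1$ as an independent Bernoulli($a$) influencer set so that the reachable set is $\{1\}$ plus the union of the components meeting it, keep the full contribution of the largest component together with the direct neighbours lying outside it, and control the error term via $C_1(1-a)^{C_1}\leq 1/(ae)$. The only (immaterial) difference is that your count yields $(n-1)a$ where the lemma states $na$ --- the paper's own derivation contains the same off-by-$a$ slip, which is harmless for the asymptotic use in \Proposition{prop:tightness}.
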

\begin{proof}
Since the edge presence variables $A_{ij}$ are independent, the set of nodes linked to $1$ in $\RandomG_{a,b}$ is a random set $\mathcal{I}(a)$ such that each node in $\{2,...,n\}$ belongs to it independently with probability $a$. Also, $\mathcal{I}(a)$ is independent from the subgraph restricted to $\{2,...,n\}$, and since each edge in $\{2,...,n\}$ is drawn independently and has probability $b$, this subgraph is an \Erdos graph of size $n-1$ and parameter $b$. Hence, if $\Exp{\sigma_b(\mathcal{I}(a))}$ is the influence of a random set $\mathcal{I}(a)$ in $\RandomG(n-1,b)$ as defined in \Theorem{th:randomResult}, then
\begin{equation}
\sigma_{a,b}(\{1\}) = 1 + \Exp{\sigma_b(\mathcal{I}(a))}.
\end{equation}
Hence, \Eq{eq:linkperco} and the same derivation as in \Theorem{th:sizecomponent} gives that:
\begin{equation}
\begin{array}{ll}
\sigma_{a,b}(\{1\}) & \displaystyle \geq 1 + na + (1-a)\Exp{C_1(\RandomG(n-1,b)) (1 - (1-a)^{C_1(\RandomG(n-1,b))-1})}\\
&\displaystyle \geq 1 + na + (1-a)\Exp{C_1(\RandomG(n-1,b))} - \frac{1}{-\ln(1-a)e}\\
& \displaystyle \geq 1 + na + (1-a)\Exp{C_1(\RandomG(n-1,b))} - \frac{1}{ae}.
\end{array}
\end{equation}
\end{proof}
However, a simple calculation holds $\HazSpec = \frac{(n-2)b' + \sqrt{(n-2)^2b'^2 + 4 (n-1) a'^2}}{2}$, where $a' = -\ln(1-a)$ and $b'=-\ln(1-b)$. We now conclude in the three regimes:

\paragraph{Subcritical regime} ($\HazSpec < 1$): In this case, we take $b = 0$ and $a = \frac{\rho}{\sqrt{n-1}}$, for $\rho\in[0,1)$. Then $\HazSpec = \rho + O(\frac{1}{\sqrt{n}})$ and $\sigma_{a,b} = 1 + \rho \sqrt{n-1} \geq \frac{\rho}{2}\sqrt{n}$ for $n$ sufficiently large.

\paragraph{Critical and supercritical regime} ($\HazSpec \geq 1$): In this case, we take $a = \frac{1}{\sqrt{n\ln{n}}}$ and $b = \frac{\rho}{n}$. Then $\HazSpec = \rho + O(\frac{1}{\ln{n}})$ and
\begin{equation}
\sigma_{a,b} \geq O(\sqrt{n\ln{n}}) + \Exp{C_1(\RandomG(n-1,b))}.
\end{equation}
However, classical results in percolation theory \cite{erd6s1960evolution} state that, for $\eta > 0$ and $\omega(n)$ any function \st $\lim_{n\rightarrow +\infty}\omega(n) = +\infty$,
\begin{equation}
C_1(\RandomG(n,\frac{1}{n})) \geq \frac{n^{2/3}}{\omega(n)} \aas
\end{equation}
and
\begin{equation}
\left|\frac{C_1(\RandomG(n,\frac{\rho}{n}))}{n} - \gamma_0(c)\right| \leq \eta \aas
\end{equation}
Hence, in the first case, Markov's inequality gives $\Exp{C_1(\RandomG(n-1,b))} \geq \frac{n^{2/3}}{\omega(n)}(1 - o(1))$, which leads to, for $n$ sufficiently large,
\begin{equation}
\Exp{C_1(\RandomG(n-1,b))} \geq C_{\rho}n^{2/3},
\end{equation}
for some $C_\rho > 0$, since assuming $\liminf_{n\rightarrow +\infty} \frac{\Exp{C_1(\RandomG(n-1,b))}}{n^{2/3}} = 0$ and taking $\omega(n) = \max_{m\leq n} \sqrt{\frac{n^{2/3}}{\Exp{C_1(\RandomG(n-1,b))}}}$ leads to $\liminf_{n\rightarrow +\infty} \sqrt{\frac{\Exp{C_1(\RandomG(n-1,b))}}{n^{2/3}}} \geq 1$, which contradicts the assumption. For the second case, Markov's inequality gives
\begin{equation}
\begin{array}{ll}
\Exp{C_1(\RandomG(n-1,b))} &\geq (\gamma_0(\rho)n - \eta n)\Prob{C_1(\RandomG(n-1,b)) \geq \gamma_0(\rho)n - \eta n}\\
&\geq \gamma_0(\rho)n - \eta n - o(n).
\end{array}
\end{equation}
Taking the limit inferior leads to, for all $\eta > 0$:
\begin{equation}
\liminf_{n\rightarrow +\infty}\frac{\Exp{C_1(\RandomG(n-1,b))} - \gamma_0(\rho)n}{n} \geq \liminf_{n\rightarrow +\infty} \{- \eta - o(1)\} = -\eta~,
\end{equation}
and thus $\liminf_{n\rightarrow +\infty}\frac{\Exp{C_1(\RandomG(n-1,b))} - \gamma_0(\rho)n}{n} \geq 0$, which can be rewritten as $\Exp{C_1(\RandomG(n-1,b))} \geq \gamma_0(\rho)n - o(n)$.
\end{proof}

\section{Proofs of the percolation theorems}

The aim of this section is to prove the results obtained in section \ref{sec:percolation} for the bond percolation problem from the general results on reachablity sets of section \ref{sec:bounds}. We recall that we consider an undirected random graph $\RandomG(n,A)$ of size $n$ with independent edge presence variables $\{A_{ij} : i < j\}$, and denote by $C_k(\RandomG)$ the size of its $k^{th}$-largest connected component, as well as $N(m)$ the number of connected components of $\RandomG$ of cardinality greater than or equal to $m$. We also recall that we are able to relate the distribution of the sizes of connected components of $\RandomG$ to the Hazard function through equation \ref{eq:linkperco}. Let $a>0$, then:

\begin{equation*}
\Exp{~\sum_k {C_k(\RandomG)\left(1-e^{-a C_k(\RandomG)}\right)}~}\leq \gamma(\HazSpec, a) n
\end{equation*}

\subsection{Proofs of \Theorem{th:sizecomponent} and \Corollary{cor:simpleSize}}
\begin{proof}[Proof of \Theorem{th:sizecomponent}]
\Theorem{th:sizecomponent} is simply obtained by combining equation \ref{eq:linkperco} and the following observation:
\begin{equation*}
\sum_k {C_k(\RandomG)\left(1-e^{-a C_k(\RandomG)}\right)} \leq C_1(\RandomG)\left(1-e^{-a C_1(\RandomG)}\right) + (n-C_1(\RandomG)) (1-e^{-a})
\end{equation*}
Therefore,
\begin{equation*}
\Exp{C_1(\RandomG)(1-e^{-a (C_1(\RandomG)-1)})} \leq n e^a \left(\gamma (\HazSpec,a)-1+e^{-a}\right) = n \left(1-e^{-\HazSpec\gamma (\HazSpec,a)} \right).
\end{equation*}
\end{proof}

\begin{proof}[Proof of \Corollary{cor:simpleSize}]
We first prove the subcritical result. Let $a \geq 0$. When $\HazSpec<1$, we have $\gamma (\HazSpec,a)=0$ and therefore \Lemma{lem:gammaBounds} implies $\gamma (\HazSpec,a) \leq \frac{a}{1-\HazSpec}$. By convexity of exponential function, we get:
\begin{equation*}
\Exp{C_1(\RandomG)(1-e^{-a (C_1(\RandomG)-1)})} \leq \frac{n a \HazSpec} {1-\HazSpec}.
\end{equation*}
This inequality between two derivable functions of $a$ such that $f(a) \leq g(a)$ for all $a \geq 0$ and $f(0)=g(0)$ implies that $\frac{\partial f}{\partial a}(0) \leq \frac{\partial g}{\partial a}(0)$ which yields:
\begin{equation*}
\Exp{C_1(\RandomG)(C_1(\RandomG)-1)} \leq \frac{n \HazSpec} {1-\HazSpec}.
\end{equation*}
The first equation of \Corollary{cor:simpleSize} is then a straightforward resolution of a second-order equation, using the fact that $\Exp{C_1(\RandomG)^2} \geq  \Exp{C_1(\RandomG)}^2$.

\medskip
For the critical and supercritical results, we will make use of the fact that, for all $a>0$, $C_1(\RandomG) e^{-a C_1(\RandomG)} \leq \frac{1}{ae}$, which yields:
\begin{equation}\label{eq:supercriticalapprox}
\Exp{C_1(\RandomG)} \leq \frac{e^a}{ae} + n \left(1-e^{-\HazSpec\gamma (\HazSpec,a)} \right)
\end{equation}
which rewrites $\Exp{C_1(\RandomG)} + (n-\Exp{C_1(\RandomG)})(1-e^{-a}) \leq \frac{1}{ae} + n \gamma (\HazSpec,a)$ and therefore implies:
\begin{equation}\label{eq:criticalapprox}
\Exp{C_1(\RandomG)} \leq \frac{1}{ae} + n \gamma (\HazSpec,a)
\end{equation}
From \Lemma{lem:gammaBounds}, we know that for $\HazSpec \neq 0$, $\gamma (\HazSpec,a) \leq \gamma_0(\HazSpec) + \sqrt{2a}$. We therefore get the critical result using equation \ref{eq:criticalapprox}:

\begin{equation*}
\Exp{C_1(\RandomG)} \leq n \gamma_0(\HazSpec) + \min_{a>0} \left \{\frac{1}{ae} +  n \sqrt{2a}\right\} = n \gamma_0(\HazSpec)+ n^{2/3} \left(\frac{27}{2e}\right)^{1/3}.
\end{equation*}
For the supercritical result, we use equation  \ref{eq:supercriticalapprox} and the fact that
\begin{align*}
1-e^{-\HazSpec\gamma (\HazSpec,a)} & = 1-e^{-\HazSpec\gamma_0(\HazSpec)}\left(1-e^{-\HazSpec(\gamma(\HazSpec,a)-\gamma_0(\HazSpec))} \right) \\ \nonumber
& \leq \gamma_0(\HazSpec) + \HazSpec\left(1-\gamma_0(\HazSpec)\right)\left(\gamma(\HazSpec,a)-\gamma_0(\HazSpec)\right) \\ \nonumber
& \leq \gamma_0(\HazSpec) + \frac{a \HazSpec\left(1-\gamma_0(\HazSpec)\right)^2}{1-\HazSpec+\HazSpec\gamma_0(\HazSpec)}.
\end{align*}
We then choose
\begin{equation*}
a = \sqrt{\frac{2(1-\HazSpec+\HazSpec\gamma_0(\HazSpec))}{2e \HazSpec\left(1-\gamma_0(\HazSpec)\right)^2 n +1-\HazSpec+\HazSpec\gamma_0(\HazSpec)}}
\end{equation*}
which gives us
\begin{equation*}
\Exp{C_1(\RandomG)} \leq n \gamma_0(\HazSpec) + \frac{2}{e} \sqrt{\frac{e n \HazSpec (1-\gamma_0(\HazSpec))^{2} }{1-\HazSpec+\HazSpec\gamma_0(\HazSpec)}+\frac{1}{2}} + \frac{1}{e} + \frac{e^a-1}{ae}~,
\end{equation*}
Using the fact that $a < \sqrt{2}$ and $\sqrt{x+y} \leq \sqrt{x} + \sqrt{y}$, we finally get:
\begin{equation*}
\Exp{C_1(\RandomG)} \leq n \gamma_0(\HazSpec) + \frac{2}{\sqrt{e}} \sqrt{\frac{n \HazSpec (1-\gamma_0(\HazSpec))^{2} }{1-\HazSpec+\HazSpec\gamma_0(\HazSpec)}} + \frac{1+\sqrt{2}+e^{\sqrt{2}}}{\sqrt{2}e}~,
\end{equation*}
which yields the supercritical result.
\end{proof}

\subsection{Proofs of \Theorem{th:numcomponents} and \Corollary{cor:simplenumcomponents}}

\begin{proof}[Proof of \Theorem{th:numcomponents}]

Let $a>0$. In order to prove \Theorem{th:numcomponents}, we start again from equation \ref{eq:linkperco} and use the fact that:
\begin{equation*}
\sum_k {C_k(\RandomG)\left(1-e^{- a C_k(\RandomG)}\right)\one\{C_k(\RandomG)\geq m}\} \leq  (1-e^{- a m}) \sum_{k} {C_k(\RandomG)\one\{C_k(\RandomG)\geq m}\} \mbox { and}
\end{equation*}
\begin{equation*}
\sum_k {C_k(\RandomG)\left(1-e^{- a C_k(\RandomG)}\right)\one\{C_k(\RandomG)< m}\} \leq (1-e^{- a})\left(n-\sum_{k} {C_k(\RandomG)\one\{C_k(\RandomG)\geq m}\}\right).
\end{equation*}
Therefore, we have:
\begin{equation*}
\sum_{k} {C_k(\RandomG)\one\{C_k(\RandomG)\geq m}\} \leq \frac{n e^a \left(\gamma (\HazSpec,a)-1+e^{-a}\right)}{1-e^{-a(m-1)}} = \frac{n \left(1-e^{-\HazSpec\gamma (\HazSpec,a)} \right)}{1-e^{-a(m-1)}}.
\end{equation*}
which proves the theorem, noting that $m N(m) \leq \sum_{k} {C_k(\RandomG)\one\{C_k(\RandomG)\geq m}\}$
\end{proof}
\begin{proof}[Proof of \Corollary{cor:simplenumcomponents}]
In the subcritical case, we have $\gamma (\HazSpec,a) \leq \frac{a}{1-\HazSpec}$ which means that, for all $a>0$, $\HazSpec<1$:
\begin{equation*}
N(m) \leq \frac{n}{m} \frac{a \HazSpec}{(1-\HazSpec)(1-e^{-a(m-1)})}
\end{equation*}
The right-hand side function of $a$ is increasing on the semi-line, and we therefore takes its limit when $a \rightarrow 0$ to get the subcritical result.

\medskip

For the critical case, we note that \Theorem{th:numcomponents} implies that, for all $a>0$:
\begin{equation*}
(1-e^{-am })m N(m) + (1-e^{-a}) (n-m N(m)) \leq n \gamma (\HazSpec,a)
\end{equation*}
and therefore
\begin{equation*}
N(m) \leq \frac{n}{m} \frac{\gamma (\HazSpec,a)}{1-e^{-am}} \leq \frac{n}{m} \frac{\gamma_0(\HazSpec) + \sqrt{2a}}{1-e^{-am}} = \frac{n}{m^{3/2}} \frac{\gamma_0(\HazSpec)\sqrt{m} + \sqrt{2am}}{1-e^{-am}}.
\end{equation*}
The function $x \mapsto \frac{\sqrt{2x}}{1-e^{-x}}$ admits a unique minimum for $x>0$ in $\eta$ which is the strictly positive solution of $e^\eta = 2 \eta + 1$. Setting $a = \frac{\eta}{m}$ yields:
\begin{equation*}
N(m) \leq \frac{n}{m^{3/2}} \frac{\gamma_0(\HazSpec)\sqrt{m} + \sqrt{2 \eta}}{1-e^{-\eta}}
\end{equation*}
Hence, if $\HazSpec \leq 1 - \nu m^{-1/2}$ for a fixed $\nu > 0$, \Lemma{lem:gammaSlope} implies:
\begin{equation*}
N(m) \leq \frac{n}{m^{3/2}} \frac{2\nu + \sqrt{2 \eta}}{1-e^{-\eta}}.
\end{equation*}
The critical result is given by finding the value $\nu$ for which the first orders of the subcritical and critical bounds are equal at the threshold value $\rho = 1 - \nu m^{-1/2}$, \ie $\nu$ is the solution of $\frac{1}{\nu} = \frac{2\nu + \sqrt{2 \eta}}{1-e^{-\eta}}$.

\medskip

For the supercritical result, we will make use of the fact that
\begin{equation*}
1-e^{-\HazSpec\gamma (\HazSpec,a)} \leq \gamma_0(\HazSpec) + \frac{a \HazSpec\left(1-\gamma_0(\HazSpec)\right)^2}{1-\HazSpec+\HazSpec\gamma_0(\HazSpec)}.
\end{equation*}
Introducing $B=\frac{\HazSpec\left(1-\gamma_0(\HazSpec)\right)^2}{1-\HazSpec+\HazSpec\gamma_0(\HazSpec)}$, \Theorem{th:numcomponents} gives:
\begin{equation}\label{eq:supercriticalwithB}
N(m) \leq \frac{n}{m} \left(\frac{\gamma_0(\HazSpec) + B a}{1-e^{a(m-1)}}\right)
\end{equation}
Derivating the right-hand side with respect to $a$ and setting $x=a(m-1)$, we find that the minimizer $x^{\star}$ is given by the unique strictly positive solution of $e^x = 1 + x + \frac{B}{\gamma_0(\HazSpec)}(m-1)$. Therefore, we now in particular that $x^{\star} \leq \sqrt{2(e^{x^\star}-1-x^{\star})} = \sqrt{2 \gamma_0(\HazSpec) (m-1) / B}$. The supercritical result is obtained by plugging
\begin{equation*}
a = \sqrt{\frac{2 \gamma_0(\HazSpec)}{B(m-1)}}
\end{equation*}
into equation \ref{eq:supercriticalwithB}.
\end{proof}

\section*{Acknowledgements}
This research is part of the SODATECH project funded by the French Government within the program of ``\emph{Investments for the Future\,--\,Big Data}''.

\bibliographystyle{siam}
\bibliography{SpectralBounds}

\begin{thebibliography}{10}

\bibitem{PhysRevE.69.050901}
{\sc E.~Ben-Naim and P.~L. Krapivsky}, {\em Size of outbreaks near the epidemic
  threshold}, Physical Review E, 69 (2004), p.~050901.

\bibitem{EJP817}
{\sc S.~Bhamidi, R.~van~der Hofstad, and J.~van Leeuwaarden}, {\em Scaling
  limits for critical inhomogeneous random graphs with finite third moments},
  Electronic Journal of Probability, 15 (2010), pp.~1682--1702.

\bibitem{bollobas1984evolution}
{\sc B.~Bollob{\'a}s}, {\em The evolution of random graphs}, Transactions of
  the American Mathematical Society, 286 (1984), pp.~257--274.

\bibitem{bollobas2010percolation}
{\sc B.~Bollob{\'a}s, C.~Borgs, J.~Chayes, and O.~Riordan}, {\em Percolation on
  dense graph sequences}, The Annals of Probability, 38 (2010), pp.~150--183.

\bibitem{bollobas2007phase}
{\sc B.~Bollob{\'a}s, S.~Janson, and O.~Riordan}, {\em The phase transition in
  inhomogeneous random graphs}, Random Structures \& Algorithms, 31 (2007),
  pp.~3--122.

\bibitem{chen2009efficient}
{\sc W.~Chen, Y.~Wang, and S.~Yang}, {\em Efficient influence maximization in
  social networks}, in Proceedings of the 15th ACM SIGKDD International
  Conference on Knowledge Discovery and Data Mining, ACM, 2009, pp.~199--208.

\bibitem{draief2008}
{\sc M.~Draief, A.~Ganesh, and L.~Massouli\'{e}}, {\em Thresholds for virus
  spread on networks}, The Annals of Applied Probability, 18 (2008),
  pp.~359--378.

\bibitem{erd6s1960evolution}
{\sc P.~Erd{\"o}s and A.~R{\'e}nyi}, {\em On the evolution of random graphs},
  Publications of the Mathematical Institute of the Hungarian Academy of
  Sciences, 5 (1960), pp.~17--61.

\bibitem{Georgii99therandom}
{\sc H.~Georgii, O.~H\"aggstr\"om, and C.~Maes}, {\em The random geometry of
  equilibrium phases}, Phase Transitions and Critical Phenomena, 18 (1999),
  pp.~1--142.

\bibitem{DBLP:conf/icml/Gomez-RodriguezBS11}
{\sc M.~Gomez-Rodriguez, D.~Balduzzi, and B.~Sch{\"o}lkopf}, {\em Uncovering
  the temporal dynamics of diffusion networks}, in Proceedings of the 29th
  International Conference on Machine Learning, 2011, pp.~561--568.

\bibitem{rodriguez2012influence}
{\sc M.~Gomez-Rodriguez and B.~Sch{\"o}lkopf}, {\em Influence maximization in
  continuous time diffusion networks}, in Proceedings of the 29th International
  Conference on Machine Learning, 2012, pp.~313--320.

\bibitem{grassberger2003critical}
{\sc P.~Grassberger}, {\em Critical percolation in high dimensions}, Physical
  Review E, 67 (2003), p.~036101.

\bibitem{holley1974}
{\sc R.~Holley}, {\em Remarks on the fkg inequalities}, Communications in
  Mathematical Physics, 36 (1974), pp.~227--231.

\bibitem{Kempe:2003:MSI:956750.956769}
{\sc D.~Kempe, J.~Kleinberg, and E.~Tardos}, {\em Maximizing the spread of
  influence through a social network}, in Proceedings of the 9th ACM SIGKDD
  International Conference on Knowledge Discovery and Data Mining, ACM, 2003,
  pp.~137--146.

\bibitem{kermack1932contributions}
{\sc W.~O. Kermack and A.~G. McKendrick}, {\em Contributions to the
  mathematical theory of epidemics. ii. the problem of endemicity}, Proceedings
  of the Royal society of London. Series A, 138 (1932), pp.~55--83.

\bibitem{opac-b1130933}
{\sc E.~D. Kolaczyk}, {\em Statistical analysis of network data : methods and
  models}, Springer series in statistics, Springer, New York, NY, USA, 2009.

\bibitem{NIPS2014_5364}
{\sc R.~Lemonnier, K.~Scaman, and N.~Vayatis}, {\em Tight bounds for influence
  in diffusion networks and application to bond percolation and epidemiology},
  in Advances in Neural Information Processing Systems, 2014, pp.~846--854.

\bibitem{luczak1990component}
{\sc T.~{\L}uczak}, {\em Component behavior near the critical point of the
  random graph process}, Random Structures \& Algorithms, 1 (1990),
  pp.~287--310.

\bibitem{molloy1995critical}
{\sc M.~Molloy and B.~Reed}, {\em A critical point for random graphs with a
  given degree sequence}, Random structures \& algorithms, 6 (1995),
  pp.~161--179.

\bibitem{molloy1998size}
\leavevmode\vrule height 2pt depth -1.6pt width 23pt, {\em The size of the
  giant component of a random graph with a given degree sequence},
  Combinatorics probability and computing, 7 (1998), pp.~295--305.

\bibitem{nelson2007epidemiology}
{\sc K.~E. Nelson}, {\em Epidemiology of infectious disease: general
  principles}, Infectious Disease Epidemiology Theory and Practice.
  Gaithersburg, MD: Aspen Publishers,  (2007), pp.~17--48.

\bibitem{Newman:2010:NI}
{\sc M.~Newman}, {\em Networks: An Introduction}, Oxford University Press,
  Inc., New York, NY, USA, 2010.

\bibitem{norros2006}
{\sc I.~Norros and H.~Reittu}, {\em On a conditionally poissonian graph
  process}, Advances in Applied Probability, 38 (2006), pp.~59--75.

\bibitem{prakash2012threshold}
{\sc B.~A. Prakash, D.~Chakrabarti, N.~C. Valler, M.~Faloutsos, and
  C.~Faloutsos}, {\em Threshold conditions for arbitrary cascade models on
  arbitrary networks}, Knowledge and Information Systems, 33 (2012),
  pp.~549--575.

\bibitem{NIPS2015_5701}
{\sc K.~Scaman, R.~Lemonnier, and N.~Vayatis}, {\em Anytime influence bounds
  and the explosive behavior of continuous-time diffusion networks}, in
  Advances in Neural Information Processing Systems, 2015, pp.~2017--2025.

\bibitem{van2009virus}
{\sc P.~Van~Mieghem, J.~Omic, and R.~Kooij}, {\em Virus spread in networks},
  IEEE/ACM Transactions on Networking, 17 (2009), pp.~1--14.

\end{thebibliography}
\end{document}